\title[Monomial Deformations and Hypergeometric Functions]{Monomial Deformations of Certain Hypersurfaces and Two Hypergeometric Functions.}
\author{Kazuaki Miyatani}
\address{National Center for Theoretical Sciences, Math. Division,\\
National Taiwan University, No. 1, Sec. 4, Roosevelt Road, Taipei}
\email{miyatani@math.i3vi3.xyz}
\date{\today}
\begin{document}
\maketitle

\begin{abstract}
    The purpose of this article is to give an explicit description, in terms of hypergeometric functions
    over finite fields, of zeta function of a certain type of smooth hypersurfaces that generalizes Dwork family.
    The point here is that we count the number of rational points
    employing both character sums and the theory of weights, which enables us to enlighten the calculation of the zeta function.
\end{abstract}

\setcounter{section}{-1}
\section{Introduction.}

Hypergeometric functions with specific parameters appear as periods of families of algebraic varieties over complex numbers.
The most classical example is the function $\ghf{2}{1}{1/2, 1/2}{1}{\lambda}$ 
apparing in the period of Legendre family of elliptic curves.
Another example is the function $\ghf{n}{n-1}{1/(n+1),2/(n+1),\dots,n/(n+1)}{1,\dots,1}{\lambda^{-(n+1)}}$
observed by Dwork \cite{Dwork:PC}.
This appears in the periods of the family of Calabi--Yau varieties defined by $T_1^{n+1}+\dots+T_{n+1}^{n+1}-\lambda (n+1)T_1\dots T_{n+1}$ in the $n$-dimensional projective spaces; this family is now called ``Dwork family''.

Turning to algebraic varieties over finite fields, hypergeometric series is still related to them.
Remarkably, a criterion for existence of unit root (a Frobenius eigenvalue on the \'etale cohomology which is a $p$-adic unit) of Legendre family, 
and the unit root itself if exists, can be described by using the hypergeometric series
$\ghf{2}{1}{1/2,1/2}{1}{x}$ considered as a formal power series with $p$-adic coefficients.
We still have an analogous theorem on Dwork families \cite{Yu:VURDFCYV}.

\vspace{10pt}

Around 1990, Greene \cite{Greene:HFFF} and Katz \cite{Katz:ESDE} independently introduced an
$\ell$-adic version of hypergeometric functions using Gauss sums,
which is called ``hypergeometric function over finite fields'' or ``Gaussian hypergeometric function''.
This function has a connection with algebraic varieties over finite fields
analogously to the classical hypergeometric functions.
For example, the Frobenius traces of first \'etale cohomology of Legendre families over $\bF_q$ (with odd $q$)
can be described by $\ghf{2}{1}{\varphi_2,\varphi_2}{\varepsilon}{\lambda}_{\bF_q}$,
where $\varphi_2$ \resp{$\varepsilon$} is a $\overline{\bQ_{\ell}}$-valued character of order $2$ \resp{trivial character} on $\bF_q^{\times}$.
We also have a similar theorem on Dwork families, which relates the zeta function of (each member of) Dwork family
over $\bF_q$ (with $q\equiv 1 \pmod{n+1}$)
with $\ghf{n}{n-1}{\varphi_{n+1},\varphi_{n+1}^2,\dots,\varphi_{n+1}^n}{\varepsilon,\dots,\varepsilon}{\lambda^{-(n+1)}}_{\bF_q}$, where $\varphi_{n+1}$ is a $\overline{\bQ_{\ell}}$-valued character of order $n+1$ on $\bF_q^{\times}$ \cite{Katz:ALDF}, \cite{Goutet:LTFZFDH}, \cite{Goutet:EFZFDH}, \cite{Goutet:IDCFZFDH}.

We have so far two following strategies for proving such a relationship between hypergeometric functions over finite fields
and the Frobenius traces of \'etale cohomology of algebraic varieties over finite fields.
The first one is counting rational points of the varieties with character sums.
This strategy is classical and employed by many mathematicians;
for example by Koike \cite{Koike:OMOHSFFECFF} for Legendre family of elliptic curves, 
by Barman and Kalita \cite{Barman-Kalita:IJNT} for more general family of curves
and by Goutet \cite{Goutet:LTFZFDH}, \cite{Goutet:EFZFDH}, \cite{Goutet:IDCFZFDH} for Dwork families.
This strategy is powerful and we may get a quite explicit result with it; we, instead,
often need a complicated calculations concerning character sums,
and it highly depends on the shape (symmetry, for example) of the defining equation of the varieties
whether the calculation is successful.
The second strategy is a geometric observation, which is remarkably employed by Katz in case of Dwork families and its generalizations \cite{Katz:ALDF};
one first decomposes the cohomology sheaf by using a finite abelian group acting on this variety,
and then relate each component with an $\ell$-adic hypergeometric sheaf.
This strategy is natural and distinguished, however, each components can be identified only modulo multiplication
by a continuous $\overline{\bQ_{\ell}}$-valued character on the absolute Galois group of $\bF_q$ \cite[Theorem 5.3, Question 5.5]{Katz:ALDF}.
\vspace{10pt}

In this article, we show more general examples of families of hypersurfaces whose zeta functions a related to
hypergeometric functions over finite fields, employing a method in which these two strategies are mixed up.
To be more precise, our method basically follows the first strategy (counting number of rational points via character sums),
but we do not forget the fact that the hypergeometric functions over finite fields has a geometric (or $\ell$-adic) nature,
and use this fact to ignore factors with improper weight.
This allows us to avoid some messy calculation of character sums, and as a result, we obtain a description of the zeta function with much more general hypersurfaces in terms of hypergeometric functions over finite fields.

The hypersurfaces on which we work is the monomial deformation of not-necessarily-diagonal hypersurfaces
of degree $n+1$ in $\bP^{n+1}_{\bF_q}$ defined by the sum of $n$ monomials,
that is, the smooth hypersurface $X_{\lambda}$, which is defined in $\bP^n_{\bF_q}$ by
\[
    F_{\lambda}(T_1,\dots,T_{n+1}) = c_1T^{a_1}+\dots+c_{n+1}T^{a_{n+1}} - \lambda T_1\dots T_{n+1},
\]
where $c_1,\dots,c_{n+1}\in\bF_q^{\times}$, $a_1,\dots,a_{n+1}\in\bZ_{\geq 0}$, and 
for $a_i=(a_{1i},\dots,a_{n+1,i})\in\bZ_{\geq 0}$, $T^{a_i}=T_1^{a_{1i}}\dots T_{n+1}^{a_{n+1,i}}$;
we assume that $X_0$ is smooth, and we view $X_{\lambda}$ as a monomial deformation of $X_0$
(moreover, we assume that $q-1$ is divisible by some numbers defined by $F_0$; see Subsection 3.1).

Finally, besides this main theme,
we note that this hypersurface $X_{\lambda}$ also has a relationship with usual hypergeometric series viewed as
$p$-adic coefficients as in the classical example.
We also refer and prove this fact in this article.

\vspace{10pt}

We conclude this introduction by explaining the structure of this article.

Section 1 is devoted to the foundation of the hypergeometric functions over finite fields.
In Subsection 1.1, we recall the definition and basic properties of hypergeometric function over finite fields,
and in Subsection 1.2, we introduce a notion of ``$q$-Weil function'' which describes the properties of 
``coming from a $\ell$-adic sheaf'' and restate the existence of $\ell$-adic hypergeometric sheaves in this terminology.

In Section 2, we start the study of the zeta function of the hypersurface $X_{\lambda}$.
Subsection 2.1 is devoted to introducing some data to describe the hypergeometric functions related to $X_{\lambda}$.
Subsection 2.2 is a preparation for Subsection 2.3, in which we describe the criterion for existence of unit root,
and even the unit root itself if exists, in terms of the usual hypergeometric series (in $p$-adic coefficients).

Section 3 is the main part of this article, which is devoted to the calculation of the zeta function of $X_{\lambda}$.
Subsection 3.1 is devoted to stating explicit assumption and introducing the other data we need for calculation.
In subsection 3.2, we state the main theorem, which describes the zeta function of $X_{\lambda}$
in terms of hypergeometric functions over finite fields;
here, we also reduce the proof to a direct calculation of character sums.
We execute the calculation in Subsection 3.3.

\subsection*{Notation.}

Throughout this article, we fix an prime number $p$, a power $q$ of $p$ and an integer $n$ greater than or equal to $2$.
The finite field with $q^r$ elements (for a positive integer $r$) is denoted by $\bF_{q^r}$.

We denote the group of characters $\bF_q^{\times}\to\obQ^{\times}$ by $\widehat{\bF_q^{\times}}$,
and the trivial character by $\epsilon$.
A character $\chi\in\widehat{\bF_q^{\times}}$ is also considered as a function $\bF_q\to\obQ$ by setting $\chi(0)=0$.

By default, elements in $A^m$ for an abelian group $A$ and a natural number $m$ are considered as column vectors.
For such a vector $b={}^{\rt}(b_1,\dots,b_m)$, we write $|b|:=b_1+\dots+b_m$.

Finally, \emph{throughout this article, we fix a non-trivial additive character $\theta\colon\bF_q\to\overline{\bQ_{\ell}}$.}

\section{Preliminaries on Hypergeometric Functions.}

\subsection{Definitions.}

In this subsection, we review the hypergeometric functions over finite fields.
First, let us recall the usual hypergeometric series.

\begin{definition}
For $2(n+1)$ rational numbers $A_1,\dots,A_{n+1},B_1,\dots,B_n$, we define the hypergeometric series by
\[
    \ghf{n+1}{n}{A_1,\dots,A_{n+1}}{B_1,\dots,B_n}{x} := \sum_{k=0}^{\infty}
    \frac{\big(A_1\big)_k\dots\big(A_{n+1}\big)_k}{\big(B_1\big)_k\dots\big(B_n\big)_k\big(1\big)_k}x^k;
\]
here, for a number $c$ and a natural number $k$, the Pochhammer symbol $\big(c\big)_k$ is defined to be
the product $c(c+1)\dots(c+k-1)$, that is, by using Gamma function $\big(c\big)_k\defeq \Gamma(c+k)/\Gamma(c)$.
\end{definition}

Now, we introduce the definition of hypergeometric function over finite fields;
recall that we fix a non-trivial additive character $\theta\colon \bF_q\to\obQ^{\times}$.

\begin{definition}
    Let $A_1,\dots,A_{n+1}$ and $B_1,\dots,B_{n+1}$ be characters of $\bF_q^{\times}$.
    We define the \emph{hypergeometric function over $\bF_q$} with these parameters as
\[
    \tghf{n+1}{n+1}{ A_1,\dots, A_{n+1}}{B_1,\dots,B_n}{x}_{\bF_q}=
    \frac{1}{q-1} \sum_{\chi\in\widehat{\bF_q^{\times}}}\prod_{i=1}^{n+1}\frac{G\big(A_i\chi\big)}{G\big(A_i\big)}
    \prod_{i=1}^{n+1}\frac{G\big(\overline{B_i\chi}\big)}{G\big(\overline{B_i}\big)}
    \chi(-1)^{n+1}\chi(x);
\]
    here, $G\big(\mu\big)$ for a character $\mu$ is the Gauss sum with respect to the fixed $\theta$,
    that is, $G\big(\mu\big)\defeq \sum_{x\in\bF_q^{\times}}\theta(x)\mu(x)$.
    Moreover, we define
\[
    \ghf{n+1}{n}{A_1,\dots, A_{n+1}}{B_1,\dots,B_n}{x}_{\bF_q}=
    \tghf{n+1}{n+1}{A_1,\dots,A_{n+1}}{B_1,\dots,B_n,\epsilon}{x}_{\bF_q}.
\]
\end{definition}

\begin{remark}
    \label{rem:katzandmccarthy}
    In the literature, there are a lot of variants of hypergeometric functions over finite fields exist
    besides the original definition by Greene \cite{Greene:HFFF} and Katz \cite{Katz:ESDE}.
    Our function $\ghf{n+1}{n}{A_1,\dots,A_{n+1}}{B_1,\dots,B_n}{x}_{\bF_q}$ coincides with the
    definition by McCarthy \cite[Definition 1.4]{McCarthy:TWPHFFF}.
\end{remark}

    An important feature of the hypergeometric functions over finite fields is that
    it has a geometric interpretation.
    In fact, Katz \cite[8.2]{Katz:ESDE} constructs a $\overline{\bQ_{\ell}}$-sheaf
    $\sH\big(!, \theta; A_1,\dots,A_{n+1}; B_1,\dots,B_{n+1}\big)$ on $\Gm$, which is smooth on $\Gm\setminus\{1\}$
    and pure of weight $2n+1$ if $\{A_1,\dots,a_{n+1}\}\cap\{B_1,\dots,B_{n+1}\}=\emptyset$ \cite[Theorem 8.4.2 (4)]{Katz:ESDE}.
    We may show, in the same way as \cite[Proposition 2.6]{McCarthy:TWPHFFF}, that its trace of Frobenius at $x$ is equal to 
\[
    \prod_{i=1}^{n+1}G\big(A_i\big)G\big(B_i\big)\cdot\tghf{n+1}{n+1}{A_1,\dots,A_{n+1}}{B_1,\dots,B_{n+1}}{x^{-1}}_{\bF_q}.
\]

\subsection{$q$-Weil functions.}

In this subsection, we introduce a concept of $q$-Weil functions, and prove that the Gauss sums
and the hypergeometric functions over finite fields give examples of it.
Recall that an algebraic number $\alpha$ is said to be a \emph{$q$-Weil number of weight $k$},
where $k$ is an integer,
if for all embedding $\iota\colon\obQ\to\bC$, the absolute value of $\iota(\alpha)$ equals $q^{k/2}$.

\begin{definition}
Let $f\colon \bZ_{>0} \to \obQ$ be a function and $k$ an integer.
We say that $f$ is a \emph{pure $q$-Weil function of weight $k$} if there exist some 
$q$-Weil numbers $\alpha_1,\dots,\alpha_m$ of weight $k$ satisfying
\[
    f(r) = \sum_{i=1}^m\alpha_i^r \quad (\forall r\in\bZ_{>0}).
\]
We say that $f$ is a \emph{(mixed) $q$-Weil function} if there exist some pure $q$-Weil functions $f_i$ of weight $k_i$
$(i=1,\dots, m)$ and a number $\varepsilon_1,\dots,\varepsilon_m\in\{1, -1\}$ satisfying
\[
    f = \sum_{i=1}^m \varepsilon_if_i.
\]
$f$ is a \emph{(mixed) $q$-Weil function of weight $\leq k$} \resp{\emph{of weight $\geq k$, of weight $\neq k$}}
if all $k_i$'s can be taken to be $\leq k$ \resp{$\geq k$, $\neq k$}.
\end{definition}

The property of being $q$-Weil function can be restated in terms of the zeta function defined below.

\begin{definition}
The zeta function $\zeta(f)(T)$ of a function $f\colon\bZ_{>0}\to\obQ$ is a formal power series
\[
    \zeta(f)(T) = \exp\left(-\sum_{r=1}^{\infty}f(r)\frac{T^r}{r}\right) \in 1 + T\obQ[[T]].
\]
\label{def:zeta}
\end{definition}

The following proposition follows from a standard calculation.

\begin{proposition}
A non-zero function $f\colon \bZ_{>0}\to\obQ$ is a pure $q$-Weil function of weight $k$ if and only if
the zeta function $\zeta(f)(T)$ is a polynomial in $\overline{\bQ}[T]$ all of whose reciprocal roots are 
$q$-Weil numbers of weight $k$.
\end{proposition}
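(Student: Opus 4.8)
The plan is to use the formal power series identity $-\sum_{r=1}^{\infty}\alpha^r T^r/r=\log(1-\alpha T)$, valid in $\obQ[[T]]$ for any $\alpha\in\obQ$, which converts the exponential in the definition of $\zeta(f)$ into a product of linear factors. Once this identity is in hand, both implications become purely formal: the substantive content of the proposition is simply the matching of this product with the factorization of $\zeta(f)(T)$ into linear factors over the algebraically closed field $\obQ$.

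First I would treat the forward direction. Assuming $f(r)=\sum_{i=1}^m\alpha_i^r$ with each $\alpha_i$ a $q$-Weil number of weight $k$, I would substitute into Definition \ref{def:zeta}, interchange the two summations, and apply the identity above termwise to obtain
\[
    \zeta(f)(T)=\exp\Bigl(\sum_{i=1}^m\log(1-\alpha_i T)\Bigr)=\prod_{i=1}^m(1-\alpha_i T).
\]
Since a $q$-Weil number of weight $k$ has complex absolute value $q^{k/2}\neq 0$, every $\alpha_i$ is non-zero, so this is a genuine polynomial of degree $m$ whose reciprocal roots are exactly $\alpha_1,\dots,\alpha_m$, all of which are $q$-Weil of weight $k$ by hypothesis. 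The non-vanishing of $f$ is used only to guarantee $m\geq 1$, so that $\zeta(f)(T)$ is non-constant.

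For the reverse direction I would start from the factorization. Since $\zeta(f)(T)\in 1+T\obQ[[T]]$ is assumed to be a polynomial, we have $\zeta(f)(0)=1$, so $0$ is not a root and every reciprocal root is well defined and non-zero; writing these (with multiplicity) as $\alpha_1,\dots,\alpha_m$, each a $q$-Weil number of weight $k$, gives $\zeta(f)(T)=\prod_{i=1}^m(1-\alpha_i T)$. Taking $-\log$ of both sides, expanding each factor by the same identity, and comparing the coefficient of $T^r/r$ against the defining relation $-\sum_{r=1}^{\infty}f(r)T^r/r=\log\zeta(f)(T)$ yields $f(r)=\sum_{i=1}^m\alpha_i^r$ for every $r$, exhibiting $f$ as a pure $q$-Weil function of weight $k$.

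The computation carries no essential difficulty, and the only points demanding care are bookkeeping rather than mathematics. I would make sure that $\exp$ and $\log$ are invoked as mutually inverse bijections between $T\obQ[[T]]$ and $1+T\obQ[[T]]$, so that the coefficient comparison is legitimate, and that the hypotheses on non-vanishing and the normalization $\zeta(f)(0)=1$ are used exactly where needed, namely to exclude the degenerate case $f\equiv 0$ and to ensure all reciprocal roots are non-zero. The one step I would state explicitly, rather than leave to ``standard calculation'', is the linear independence of the functions $r\mapsto\alpha_i^r$ (a Vandermonde argument), which is what makes the correspondence between $f$ and the multiset $\{\alpha_i\}$ well posed.
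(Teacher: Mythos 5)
Your proposal is correct and is precisely the ``standard calculation'' that the paper invokes without writing out: the identity $-\sum_{r\geq 1}\alpha^rT^r/r=\log(1-\alpha T)$ together with the mutually inverse bijections $\exp$ and $\log$ between $T\obQ[[T]]$ and $1+T\obQ[[T]]$, giving $\zeta(f)(T)=\prod_i(1-\alpha_iT)$ in one direction and recovering $f(r)=\sum_i\alpha_i^r$ by coefficient comparison in the other. The Vandermonde remark is not needed for the proposition itself (only for the uniqueness in the subsequent corollary), but it does no harm.
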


\begin{corollary}
    For each non-zero $q$-Weil function $f$, there uniquely exist mutually different numbers $k_1,\dots,k_r$ and
    non-zero $q$-Weil functions $f_i$ of weight $k_i$ for each $i$ such that $f=\sum f_i$.
\end{corollary}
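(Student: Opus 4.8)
The plan is to reduce the whole statement to a single canonical expansion of $f$ as a combination of powers of $q$-Weil numbers, and then to read off the weight decomposition by simply grouping the terms. First I would establish that every non-zero $q$-Weil function $f$ admits an expression
\[
    f(r)=\sum_{j=1}^{s}c_j\alpha_j^r\qquad(\forall r\in\bZ_{>0}),
\]
where $\alpha_1,\dots,\alpha_s$ are mutually distinct $q$-Weil numbers (each of some integer weight) and $c_1,\dots,c_s$ are non-zero integers. Existence is immediate from the definitions: writing $f=\sum_i\varepsilon_i f_i$ with each $f_i$ pure, expanding $f_i(r)=\sum_k\alpha_{ik}^r$, and collecting the contributions attached to each distinct $q$-Weil number (discarding those whose total coefficient vanishes) yields such a representation.

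The hard part will be the \emph{uniqueness} of this expansion, which is equivalent to the linear independence over $\obQ$ of the functions $r\mapsto\alpha^r$ attached to distinct algebraic numbers $\alpha$. I would deduce it from the zeta function of Definition~\ref{def:zeta}. A direct computation with $\sum_{r\geq 1}x^r/r=-\log(1-x)$ gives
\[
    \zeta(f)(T)=\prod_{j=1}^{s}(1-\alpha_j T)^{c_j}\in\obQ(T),
\]
and the map $f\mapsto\zeta(f)$ is injective, since $f(r)$ is recovered as the coefficient of $T^r$ in $-T\frac{d}{dT}\log\zeta(f)(T)$. Because a non-zero rational function over $\obQ$ with value $1$ at $T=0$ determines its zeros and poles together with their multiplicities (unique factorization in $\obQ[T]$), the collection $\{(\alpha_j,c_j)\}$ is uniquely determined by $f$. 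This canonical expansion is the only real content; everything afterwards is bookkeeping.

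Next I would pass to weights. Each $q$-Weil number $\alpha$ has a well-defined weight $\mathrm{wt}(\alpha)$, the unique integer $k$ with $|\iota(\alpha)|=q^{k/2}$ for all $\iota$; in particular numbers of different weights are distinct. Letting $k_1,\dots,k_r$ be the distinct values taken by $\mathrm{wt}(\alpha_j)$, I set
\[
    f_t(r):=\sum_{\mathrm{wt}(\alpha_j)=k_t}c_j\alpha_j^r.
\]
Writing each summand $c_j\alpha_j^r$ as $\pm$ a pure $q$-Weil function of weight $k_t$, one sees that $f_t$ is a non-zero $q$-Weil function of weight $k_t$, and clearly $f=\sum_{t=1}^r f_t$ with the $k_t$ mutually different; this proves existence.

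Finally, for uniqueness of the weight decomposition, suppose $f=\sum_u h_u$ with each $h_u$ a non-zero $q$-Weil function of pairwise distinct weight $l_u$. Expanding every $h_u$ in the canonical form above, all $q$-Weil numbers occurring in a fixed $h_u$ have weight $l_u$, so the numbers occurring in different $h_u$ are disjoint. Hence $\sum_u h_u$ is already written as an integer combination of powers of pairwise distinct $q$-Weil numbers, and the uniqueness established in the second paragraph forces it to coincide, term by term, with the canonical expansion of $f$. Grouping by weight then gives $\{l_u\}=\{k_1,\dots,k_r\}$ and $h_u=f_t$ whenever $l_u=k_t$, which is precisely the asserted uniqueness.
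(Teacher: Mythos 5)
Your argument is correct, and it is essentially the route the paper intends: the corollary is stated as an immediate consequence of the preceding proposition precisely because one is meant to pass to $\zeta(f)(T)$, group its reciprocal zeros and poles by the weight of the corresponding $q$-Weil numbers, and invoke unique factorization in $\overline{\bQ}[T]$ for uniqueness. The paper omits the details entirely; your write-up supplies them (including the linear independence of the functions $r\mapsto\alpha^r$, which is the only point requiring an actual argument) without deviating from that approach.
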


To state explicitely the $q$-Weil property of Gauss sums and hypergeometric functions,
we introduce the following notation.

\begin{notation}
    Let $r$ be a positive integer.
    Then, we define the non-trivial additive character $\theta\circ\Tr_{\bF_{q^r}/\bF_q}$ on $\bF_{q^r}$ by $\theta_r$.
    Similarly, for each character $\chi$ on $\bF_q^{\times}$, the character $\chi\circ\Norm_{\bF_{q^r}/\bF_q}$ on $\bF_{q^r}^{\times}$ is denoted by $\chi_r$.
    The Gauss sum of a character $\chi'$ on $\bF_{q^r}^{\times}$ with respect to $\theta_r$
    is denoted by $G\big(\chi'\big)$.

    Moreover, for $2(n+1)$ characters $A_1,\dots,A_{n+1},B_1,\dots,B_{n+1}$ of $\bF_q^{\times}$, we define 
    $\tghf{n+1}{n+1}{A_1,\dots,A_{n+1}}{B_1,\dots,B_{n+1}}{x}_{\bF_{q^r}}$ to be
    \[
        \frac{1}{q^r-1} \sum_{\chi\in\widehat{\bF_{q^r}^{\times}}}\prod_{i=1}^{n+1}\frac{G\big(A_{i,r}\chi\big)}{G\big(A_{i,r}\big)}
        \prod_{i=1}^n\frac{G\big(\overline{B_{i,r}\chi}\big)}{G\big(\overline{B_{i,r}}\big)}
        G\big(\overline{\chi}\big)\chi(-1)^{n+1}\chi(x);
    \]
    The function $\ghf{n+1}{n}{A_1,\dots,A_{n+1}}{B_1,\dots,B_n}{x}_{\bF_{q^r}}$ is defined as
    $\tghf{n+1}{n+1}{A_1,\dots,A_{n+1}}{B_1,\dots,B_n,\varepsilon}{x}_{\bF_{q^r}}$.
    \label{def:extended_gauss_sum}
\end{notation}

\begin{proposition}
    Let $\chi$ be a character on $\bF_q^{\times}$. Then, for all positive integer $r$,
    \[
        -G\big(\chi_r\big) = \left(-G\big(\chi\big)\right)^r.
    \]
    In particular, the function $r\mapsto -G\big(\chi_r\big)$ is a pure $q$-Weil function,
    whose weight is $1$ if $\chi$ is non-trivial and is $0$ if $\chi$ is trivial.
    \label{prop:gauss_sum_is_pure}
\end{proposition}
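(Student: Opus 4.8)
The identity $-G(\chi_r)=\left(-G(\chi)\right)^r$ is the Hasse--Davenport lifting relation, and the plan is to prove it through the Euler factorisation of a suitable $L$-function over $\bF_q$. First I would attach to every monic polynomial $f\in\bF_q[t]$, with roots $\alpha_1,\dots,\alpha_d\in\overline{\bF_q}$, the quantity
\[
    \kappa(f)=\theta(\alpha_1+\dots+\alpha_d)\,\chi(\alpha_1\cdots\alpha_d),
\]
which depends only on the subleading coefficient and the constant term of $f$ and is therefore $\bF_q$-rational. Since the multiset of roots of a product is the union of the multisets of roots, $\kappa$ is multiplicative, $\kappa(fg)=\kappa(f)\kappa(g)$, so the formal series obtained from it admits an Euler product,
\[
    L(T)=\sum_{f}\kappa(f)T^{\deg f}=\prod_{P}\bigl(1-\kappa(P)T^{\deg P}\bigr)^{-1},
\]
where $f$ runs over monic polynomials and $P$ over monic irreducible ones.

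Next I would evaluate $L(T)$ in closed form. Summing $\kappa(f)$ over all monic $f$ of a fixed degree $d$ and factoring the sum across the coefficients of $f$, the sum over the subleading coefficient is a sum of $\theta$-values over all of $\bF_q$, which vanishes for $d\ge 2$ because $\theta$ is non-trivial; the degree-$0$ and degree-$1$ contributions then give
\[
    L(T)=1+G(\chi)\,T.
\]
Comparing this with the logarithm of the Euler product and reading off the coefficient of $T^r$ yields
\[
    (-1)^{r-1}G(\chi)^r=\sum_{d\mid r}d\sum_{\deg P=d}\kappa(P)^{r/d}.
\]
The key step is then to recognise the right-hand side as $G(\chi_r)$: for an irreducible $P$ of degree $d\mid r$ with a root $\alpha\in\bF_{q^d}$, one has $\alpha_1+\dots+\alpha_d=\Tr_{\bF_{q^d}/\bF_q}(\alpha)$ and $\alpha_1\cdots\alpha_d=\Norm_{\bF_{q^d}/\bF_q}(\alpha)$, and transitivity of trace and norm for $\bF_{q^d}\subseteq\bF_{q^r}$ gives $\kappa(P)^{r/d}=\theta_r(\alpha)\chi_r(\alpha)$; running $\alpha$ over the $d$ roots of each such $P$ and then over all $d\mid r$ sweeps out exactly $\bF_{q^r}^{\times}$, so the double sum equals $\sum_{\beta\in\bF_{q^r}^{\times}}\theta_r(\beta)\chi_r(\beta)=G(\chi_r)$. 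This produces $G(\chi_r)=(-1)^{r-1}G(\chi)^r$, that is, $-G(\chi_r)=\left(-G(\chi)\right)^r$. I expect this reassembly --- matching the Euler-product coefficients with the full Gauss sum over $\bF_{q^r}$ through the transitivity identities --- to be the main obstacle, since it is where the degree stratification of $\bF_{q^r}$ and the compatibility of $\theta_r,\chi_r$ with restriction to subfields must be handled carefully.

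Finally, the $q$-Weil statement is immediate from the identity: setting $\alpha=-G(\chi)$, we have $-G(\chi_r)=\alpha^r$ for all $r$, so $r\mapsto -G(\chi_r)$ is a pure $q$-Weil function and its weight is read off from $|\iota(\alpha)|$. If $\chi$ is non-trivial, then for every embedding $\iota\colon\obQ\to\bC$ the classical relation $G(\chi)G(\overline{\chi})=\chi(-1)q$, together with $\overline{\iota(G(\chi))}=\iota(\overline{\chi}(-1)G(\overline{\chi}))$, gives $|\iota(\alpha)|^2=q$, so the weight is $1$; if $\chi$ is trivial, then $G(\epsilon)=-1$, hence $\alpha=1$ and the weight is $0$.
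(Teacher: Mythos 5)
Your proof is correct, but it takes a genuinely different route from the paper: the paper disposes of the identity $-G\big(\chi_r\big)=\big(-G\big(\chi\big)\big)^r$ in one line by citing Davenport--Hasse, and of the weight claim by invoking the ``standard fact'' that $-G\big(\chi\big)$ is a $q$-Weil number of weight $1$ \resp{$0$} for $\chi$ non-trivial \resp{trivial}; you instead reprove both from scratch. Your $L$-function argument --- attaching the multiplicative invariant $\kappa(f)=\theta(\alpha_1+\dots+\alpha_d)\chi(\alpha_1\cdots\alpha_d)$ to monic polynomials, computing $L(T)=1+G\big(\chi\big)T$ from the vanishing of the $\theta$-sum in degrees $\geq 2$, and matching the coefficient of $T^r$ in the Euler product with $G\big(\chi_r\big)$ via transitivity of trace and norm --- is exactly the classical Davenport--Hasse proof, and all the steps check out, including the quietly important point that the prime $P=t$ contributes nothing because $\chi(0)=0$, and that the case of trivial $\chi$ (where $G\big(\epsilon\big)=-1$) goes through unchanged. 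Your verification of the weight via $G\big(\chi\big)G\big(\overline{\chi}\big)=\chi(-1)q$ and $\overline{\iota(G(\chi))}=\iota\big(\overline{\chi}(-1)G\big(\overline{\chi}\big)\big)$ is also the standard computation the paper leaves implicit. What your approach buys is self-containedness; what it costs is length for a fact the paper is content to treat as a black box. Either is acceptable, but in the context of this paper a citation would suffice.
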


\begin{proof}
    The first assertion is a result of Davenport and Hasse \cite[(0.8)]{Davenport-Hasse:NKGZF}.
    The second assertion follows from the first and from the standard fact that
    $-G\big(\chi_r\big)$ itself is a $q^r$-Weil number of weight $1$ if $\chi_r$ is non-trivial
    and of weight $0$ if $\chi_r$ is trivial.
\end{proof}

Now, let us state the $q$-Weil property of the hypergeometric functions on finite fields
without common characters in upper and lower parameters.

\begin{proposition}
    Let $A_1,\dots,A_{n+1}, B_1,\dots,B_{n+1}$ be characters on $\bF_q^{\times}$,
    and assume that $\{A_1,\dots,A_{n+1}\}\cap \{B_1,\dots,B_{n+1}\}=\emptyset$.
    Let $m$ be the number of trivial characters among $A_i$'s and $B_i$'s.
    Then, for all $x\in\bF_q^{\times}\setminus\{1\}$, the function
    \[
        r\mapsto \tghf{n+1}{n+1}{A_1,\dots,A_{n+1}}{B_1,\dots,B_{n+1}}{x}_{\bF_{q^r}}
    \]
    is a pure $q$-Weil function of weight $m-1$.
    \label{prop:ghf_is_pure}
\end{proposition}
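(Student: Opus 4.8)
The plan is to read the statement off the geometric incarnation of the hypergeometric function, namely Katz's sheaf $\sH\big(!,\theta;A_1,\dots,A_{n+1};B_1,\dots,B_{n+1}\big)$ on $\Gm$, combined with the Davenport--Hasse relation of Proposition~\ref{prop:gauss_sum_is_pure}. Because $\{A_1,\dots,A_{n+1}\}\cap\{B_1,\dots,B_{n+1}\}=\emptyset$, this sheaf is lisse of rank $n+1$ on $\Gm\setminus\{1\}$ and pure of weight $2n+1$ there. The hypothesis $x\in\bF_q^{\times}\setminus\{1\}$ forces $x^{-1}\in\bF_q^{\times}\setminus\{1\}$, so $x^{-1}$ lies in the lisse locus; I would let $\beta_1,\dots,\beta_{n+1}$ be the eigenvalues of geometric Frobenius on the stalk at $x^{-1}$. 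By purity these are $q$-Weil numbers of weight $2n+1$ that do not depend on $r$, and since the $\bF_{q^r}$-Frobenius at an $\bF_q$-rational point is the $r$-th power of the local Frobenius, the trace-of-Frobenius function is $r\mapsto\sum_{j=1}^{n+1}\beta_j^{\,r}$ --- already a pure $q$-Weil function of weight $2n+1$.

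Next I would apply the Frobenius-trace formula for $\sH$ recalled above, over $\bF_{q^r}$ and evaluated at $x^{-1}$. Noting that the hypergeometric function over $\bF_{q^r}$ of Notation~\ref{def:extended_gauss_sum} is exactly the one attached to the base-changed data $(\theta_r;A_{i,r};B_{i,r})$, so that the formation of $\sH$ and the trace formula commute with base change, this reads
\[
    \sum_{j=1}^{n+1}\beta_j^{\,r}
    = \prod_{i=1}^{n+1} G\big(A_{i,r}\big)G\big(B_{i,r}\big)\cdot
      \tghf{n+1}{n+1}{A_1,\dots,A_{n+1}}{B_1,\dots,B_{n+1}}{x}_{\bF_{q^r}}.
\]
It then remains to understand the product of Gauss sums as a function of $r$. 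By Proposition~\ref{prop:gauss_sum_is_pure} one has $G\big(\chi_r\big)=-\big(-G(\chi)\big)^{r}$ for every character $\chi$, and since there are $2(n+1)$ factors the accumulated signs $(-1)^{2(n+1)}$ cancel, giving
\[
    \prod_{i=1}^{n+1} G\big(A_{i,r}\big)G\big(B_{i,r}\big)=C^{\,r},
    \qquad C:=\prod_{i=1}^{n+1}G\big(A_i\big)G\big(B_i\big).
\]

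Finally I would do the weight bookkeeping. Each nontrivial character contributes a Gauss sum which is, up to sign, a $q$-Weil number of weight $1$, while each of the $m$ trivial characters contributes $G(\epsilon)=-1$, of weight $0$; hence $C$ is a single $q$-Weil number of weight $2(n+1)-m=2n+2-m$. Dividing the first display by $C^{\,r}$ yields
\[
    \tghf{n+1}{n+1}{A_1,\dots,A_{n+1}}{B_1,\dots,B_{n+1}}{x}_{\bF_{q^r}}
    =\sum_{j=1}^{n+1}\Big(\frac{\beta_j}{C}\Big)^{\,r},
\]
and each $\beta_j/C$ is a $q$-Weil number of weight $(2n+1)-(2n+2-m)=m-1$, which exhibits the function as a pure $q$-Weil function of weight $m-1$. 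I expect the only genuinely delicate step to be the weight accounting for $C$: one must confirm that base change to $\bF_{q^r}$ multiplies the weight exponent by $r$ via Davenport--Hasse without disturbing purity, and that precisely $2n+2-m$ of the $2(n+1)$ Gauss-sum factors carry weight $1$, so that the numerator's weight $2n+1$ is shifted down exactly to $m-1$.
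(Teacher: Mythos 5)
Your proposal is correct and follows essentially the same route as the paper: both invoke Katz's sheaf $\sH\big(!,\theta;A_1,\dots,A_{n+1};B_1,\dots,B_{n+1}\big)$, its purity of weight $2n+1$ on $\Gm\setminus\{1\}$, the base-change compatibility with $\bF_{q^r}$, and the Davenport--Hasse relation to peel off the Gauss-sum factor of weight $2(n+1)-m$. Your version is in fact slightly more careful than the paper's, since you make explicit that the Gauss-sum product is a single $r$-th power $C^r$ of one $q$-Weil number, which is exactly what legitimizes the final division.
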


\begin{proof}
    We use the notation in the paragraph after Remark \ref{rem:katzandmccarthy}.
    Since the base extension of the $\overline{\bQ_{\ell}}$-sheaf
    $\sH\big(!, \theta; A_1,\dots,A_{n+1}; B_1,\dots,B_{n+1}\big)$ to $\bF_{q^r}$ is isomorphic to
    $\sH\big(!, \theta_{r}; A_{1,r},\dots,A_{n+1,r}; B_{1,r},\dots,B_{n+1,r}\big)$
    \cite[(8.2.6)]{Katz:ESDE},
    the function
    \[
        r\mapsto \prod_{i=1}^{n+1}G\big(A_{i,r}\big)G\big(B_{i,r}\big)\cdot
        \tghf{n+1}{n+1}{A_1,\dots,A_{n+1}}{B_1,\dots,B_{n+1}}{x^{-1}}_{\bF_{q^r}}
    \]
    is a pure $q$-Weil function of weight $2n+1$.
    In turn, Proposition \ref{prop:gauss_sum_is_pure} shows that the function
    \[
        r\mapsto \prod_{i=1}^{n+1}G\big(A_{i,r}\big)G\big(B_{i,r}\big)
    \]
    is a pure $q$-Weil function of weight $2(n+1)-m$, which shows the proposition.
    \end{proof}


    In the classical hypergeometric functions, we may cancel common numbers in upper and lower parameters
    without any change to the functions themselves.
    For the hypergeometric series over finite fields, however, the situation is different.

\begin{definition}
    Let $A_1, \dots, A_{n+1}, B_1,\dots,B_{n+1}$ be characters on $\bF_q^{\times}$.
    By changing indices, we , without loss of generality,
    assume that $\{A_1,\dots,A_{n'+1}\}$ and $\{B_1,\dots,B_{n'+1}\}$ have an empty intersection
    and that $\{A_{n'+2},\dots, A_{n+1}\}$ and $\{B_{n'+2},\dots,B_{n+1}\}$ coincide as multi-sets.
    Then, for each positive integer $r$,
    we define the \emph{hypergeometric function with reduced parameters over $\bF_{q^r}$} by
    \[
        \trghf{A_1,\dots,A_{n+1}}{B_1,\dots,B_{n+1}}{x}_{\bF_{q^r}}\defeq
        \tghf{n'+1}{n'+1}{A_1,\dots,A_{n'+1}}{B_1,\dots,B_{n'+1}}{x}_{\bF_{q^r}}.
    \]
    \label{def:reduced_parameters}
\end{definition}

\begin{proposition}
    Let $A_1,\dots,A_{n+1}, B_1,\dots,B_{n+1}$ be characters on $\bF_q^{\times}$,
    and let $m$ be the number of trivial characters among $B_i$'s.
    Fix an element $x$ of $\bF^{\times}$.

    \textup{(i)} If all $A_i$'s are non-trivial, then the function
    \[
        r\mapsto \tghf{n+1}{n+1}{A_1,\dots,A_{n+1}}{B_1,\dots,B_{n+1}}{x}_{\bF_{q^r}}
        - \trghf{A_1,\dots,A_{n+1}}{B_1,\dots,B_{n+1}}{x}_{\bF_{q^r}}
    \]
    is a $q$-Weil function of weight $\leq m-2$.

    \textup{(ii)} If exactly one of $A_i$'s is trivial, and if at least one of $B_i$'s is trivial, then the function
    \[
        r\mapsto \tghf{n+1}{n+1}{A_1,\dots,A_{n+1}}{B_1,\dots,B_{n+1}}{x}_{\bF_{q^r}}
        - q^r\trghf{A_1,\dots,A_{n+1}}{B_1,\dots,B_{n+1}}{x}_{\bF_{q^r}}
    \]
    is a $q$-Weil function of weight $\leq m-2$.
    \label{prop:cancellation}
\end{proposition}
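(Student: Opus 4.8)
\emph{The plan} is to isolate, inside the character-sum defining $\tghf{n+1}{n+1}{A_1,\dots,A_{n+1}}{B_1,\dots,B_{n+1}}{x}_{\bF_q}$, the Gauss factors attached to the parameters that disappear on passing to $\trghf{A_1,\dots,A_{n+1}}{B_1,\dots,B_{n+1}}{x}$, and to show these factors are \emph{generically constant} in the summation variable, so that the difference collapses to a sum over finitely many characters. After reindexing as in Definition \ref{def:reduced_parameters}, write the cancelled characters as $C_1,\dots,C_{n-n'}$, so that $\{A_{n'+2},\dots,A_{n+1}\}=\{B_{n'+2},\dots,B_{n+1}\}=\{C_1,\dots,C_{n-n'}\}$ as multisets. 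For each $j$ the matching upper and lower factors combine into
\[
    \rho_{C_j}(\chi):=\frac{G\big(C_j\chi\big)\,G\big(\overline{C_j\chi}\big)}{G\big(C_j\big)\,G\big(\overline{C_j}\big)},
\]
so that, with $P(\chi):=\prod_{i=1}^{n'+1}\frac{G(A_i\chi)}{G(A_i)}\frac{G(\overline{B_i\chi})}{G(\overline{B_i})}$ the factor product of the reduced parameters, the full function is $\frac{1}{q-1}\sum_{\chi}P(\chi)\prod_{j}\rho_{C_j}(\chi)\,\chi(-1)^{n+1}\chi(x)$ (the general $\bF_{q^r}$ statement being identical after composing every character with $\Norm_{\bF_{q^r}/\bF_q}$).

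First I would record the elementary identities $G\big(\mu\big)G\big(\overline{\mu}\big)=\mu(-1)q$ for non-trivial $\mu$ and $G\big(\epsilon\big)=-1$. These give $\rho_C(\chi)=\chi(-1)$ whenever $C$ and $C\chi$ are both non-trivial, while a trivial $C$ gives $\rho_\epsilon(\chi)=\chi(-1)q$ for $\chi\neq\epsilon$; hence each $\rho_{C_j}$ takes its generic value off the single character $\chi=\overline{C_j}$ (resp.\ $\chi=\epsilon$). Replacing every $\rho_{C_j}$ by its generic value and contracting the powers of $\chi(-1)$ by means of $\chi(-1)^2=1$, the generic part of the sum is exactly $\trghf{A_1,\dots,A_{n+1}}{B_1,\dots,B_{n+1}}{x}_{\bF_q}$ in case (i), and exactly $q\,\trghf{A_1,\dots,A_{n+1}}{B_1,\dots,B_{n+1}}{x}_{\bF_q}$ in case (ii) — here one checks that the unique trivial upper parameter is forced to lie among the $C_j$, and its $\rho_\epsilon$ contributes the extra factor $q$. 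Thus the quantity to be estimated is a finite sum of terms indexed by the special set $S=\{\overline{C_1},\dots,\overline{C_{n-n'}}\}$ (with $\epsilon$ adjoined in case (ii)).

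Before estimating weights I would dispose of the prefactor $\tfrac{1}{q^{r}-1}$, which is not itself a $q$-Weil quantity. At a special character $\chi_0$ the deviation $\prod_j\rho_{C_j}-(\text{generic value})$ carries a factor $q^{-r\mu}-1$, where $\mu$ is the multiplicity with which the relevant $C_j$ occurs; since $(q^{r}-1)\mid(q^{r\mu}-1)$ one has
\[
    \frac{q^{-r\mu}-1}{q^{r}-1}=-\sum_{\ell=1}^{\mu}q^{-r\ell}.
\]
Every remaining ingredient is then a $q$-Weil function: $P(\chi_0)$ is, by Proposition \ref{prop:gauss_sum_is_pure} and Davenport--Hasse, a product of pure $q$-Weil functions (the normalisations $\chi_0\mapsto\chi_{0,r}$ and the values $\chi_{0,r}(-1),\chi_{0,r}(x)$ are roots of unity of the shape $\zeta^{r}$), so each term, and hence the whole difference, is a genuine $q$-Weil function.

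The main obstacle is the weight estimate, and it is genuinely sharper in case (ii). Counting via Proposition \ref{prop:gauss_sum_is_pure} how many of $G(A_i\chi_0),G(A_i),G(\overline{B_i\chi_0}),G(\overline{B_i})$ are trivial at $\chi_0$ expresses the weight of $P(\chi_0)$ through $m$ and the multiplicities with which $\chi_0$ occurs among the reduced parameters. In case (i) the factor $-\sum_{\ell\ge1}q^{-r\ell}$ above has weight $\le-2$, and combined with the bound on $P(\chi_0)$ this already yields weight $\le m-2$ term by term. In case (ii), however, the extra $q$ produced by the trivial common character shifts the polynomial in $q^{-r}$ to $-\sum_{\ell\ge0}q^{-r\ell}$, whose top weight is $0$, so the naive term-by-term bound only delivers $\le m-1$; recovering the last unit of weight appears to require either a cancellation among the contributions of the several special characters (including the adjoined $\chi=\epsilon$) or a geometric input of the type used in Proposition \ref{prop:ghf_is_pure}. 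I expect this final gain of weight in case (ii), rather than the structural decomposition, to be where the real work lies.
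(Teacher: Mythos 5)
Your structural decomposition is the same as the paper's. Both proofs run on the identity $G\big(\mu\big)G\big(\overline{\mu}\big)=q\mu(-1)$ for non-trivial $\mu$ (and $=1$ for $\mu=\epsilon$), which makes each ratio $G\big(C\chi\big)G\big(\overline{C\chi}\big)/G\big(C\big)G\big(\overline{C}\big)$ constant in $\chi$ away from a single character, so that the difference with the reduced function collapses to finitely many explicit terms. The paper peels off one common character at a time and telescopes, while you remove them all at once and track multiplicities via $(q^{-r\mu}-1)/(q^{r}-1)=-\sum_{\ell=1}^{\mu}q^{-r\ell}$; both are correct, and your case (i) reproduces the paper's bound $m-2$ once you write out that $P(\chi_0)$ has weight $\leq m$ (numerator of weight $\leq 2(n'+1)$ against a denominator of weight exactly $2(n'+1)-m$, all trivial $B_i$'s surviving reduction since no cancelled character is trivial when all $A_i$ are non-trivial).

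The difficulty you flag in case (ii) is real, but the resolution is not a hidden cancellation: your term-by-term bound $m-1$ is sharp, and the paper's one-sentence proof of (ii), when unpacked, delivers no more. Peeling the trivial common character gives \emph{exactly} $\tghf{n+1}{n+1}{A_1,\dots,A_n,\epsilon}{B_1,\dots,B_n,\epsilon}{x}_{\bF_{q^r}}-q^{r}\tghf{n}{n}{A_1,\dots,A_n}{B_1,\dots,B_n}{x}_{\bF_{q^r}}=-1$, a pure function of weight $0$, which already exceeds $m-2$ whenever $m=1$; and the remaining case-(i) corrections get multiplied by $q^{r}$ (weight $+2$) while the count of trivial lower parameters drops only by one, giving weight $\leq(m-1)-2+2=m-1$. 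This weight is attained: for non-trivial distinct $\psi,\phi$ one computes
\[
    \tghf{3}{3}{\psi,\phi,\epsilon}{\epsilon,\phi,\epsilon}{x}_{\bF_q}-q\,\tghf{1}{1}{\psi}{\epsilon}{x}_{\bF_q}
    = -1+\frac{G\big(\psi\overline{\phi}\big)G\big(\phi\big)}{G\big(\psi\big)}\overline{\phi}(-x),
\]
whose second term has weight $1=m-1>m-2=0$. The statement is evidently meant with $m$ counting the trivial characters among the $A_i$'s \emph{and} the $B_i$'s, as in Proposition \ref{prop:ghf_is_pure}; that convention adds $1$ to $m$ in case (ii), changes nothing in case (i), and turns the claimed bound into precisely the $m-1$ you obtain. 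So do not search for an extra unit of weight: complete the weight count exactly as in your case (i), record the single weight-$0$ term $-1$ from $\chi_0=\epsilon$ and the weight-$(m-1)$ terms from $\chi_0=\overline{C_j}$, and state the conclusion with the corrected normalization of $m$ (checking, where the proposition is invoked in Proposition \ref{prop:count_star}, that this normalization is the one actually used).
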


\begin{proof}
    To prove (i), it suffices to show that, for non-trivial characters $A_1,\dots,A_n,C$ and characters $B_1,\dots,B_n$,
    the function
    \[
        r \mapsto \tghf{n+1}{n+1}{A_1,\dots,A_n, C}{B_1,\dots,B_n,C}{x}_{\bF_{q^r}} - \tghf{n}{n}{A_1,\dots,A_n}{B_1,\dots,B_n}{x}_{\bF_{q^r}}
    \]
    is a $q$-Weil function of weight $\leq m-2$.

    By definition, $\tghf{n+1}{n+1}{A_1,\dots,A_n,C}{B_1,\dots,B_n,C}{x}_{\bF_q}$ equals
    \begin{equation}
        \label{eq:ghf_over}
        \frac{1}{q-1}\sum_{\chi\in\widehat{\bF_q^{\times}}}
        \frac{G\big(A_1\chi\big)\dots G\big(A_n\chi\big)G\big(C\chi\big)}{G\big(A_1\big)\dots G\big(A_n\big)G\big(C\big)}
        \frac{G\big(\overline{B_1\chi}\big)\dots G\big(\overline{B_{n-1}\chi}\big)G\big(\overline{C\chi}\big)}{G\big(\overline{B_1}\big)\dots G\big(\overline{B_n}\big)G\big(\overline{C}\big)}
        \chi\big((-1)^{n+1}\big)\chi(x).
    \end{equation}
    Because $G\big(\mu\big)G\big(\overline{\mu}\big)$ equals $q\mu(-1)$ if $\mu\in\widehat{\bF_q^{\times}}$ is
    non-trivial and equals $q\mu(-1)-(q-1)$ if $\mu$ is trivial,
    \[
        \frac{G\big(C\chi\big)G\big(\overline{C\chi}\big)}{G\big(C\big) G\big(\overline{C}\big)} =
        \begin{cases} \dfrac{qC\chi(-1)}{qC(-1)}=\chi(-1) & \text{if } \chi\neq C^{-1},\\[15pt]
            \dfrac{qC\chi(-1)-(q-1)}{qC(-1)} = \chi(-1)-\dfrac{q-1}{q}C(-1) & \text{if } \chi = C^{-1},
        \end{cases}
    \]
    which shows that (\ref{eq:ghf_over}) equals
    \begin{eqnarray*}
        && \frac{1}{q-1}\sum_{\chi\in\widehat{\bF_q^{\times}}}
        \frac{G\big(A_1\chi\big)\dots G\big(A_n\chi\big)}{G\big(A_1\big)\dots G\big(A_n\big)}
        \frac{G\big(\overline{B_1\chi}\big)\dots G\big(\overline{B_n\chi}\big)}{G\big(\overline{B_1}\big)\dots G\big(\overline{B_n}\big)}
        \chi\big((-1)^n\big)\chi\big(x\big)\\
        && \qquad -\frac{1}{q} \frac{G\big(A_1C^{-1}\big)\dots G\big(A_nC^{-1}\big)}{G\big(A_1\big)\dots G\big(A_n\big)}
        \frac{G\big(\overline{B_1C^{-1}}\big)\dots G\big(\overline{B_nC^{-1}}\big)}{G\big(\overline{B_1}\big)\dots G\big(\overline{B_n}\big)}
        C\big((-1)^n\big)C^{-1}\big(x\big).
    \end{eqnarray*}
    The first term of this equals $\tghf{n}{n}{A_1,\dots,A_n}{B_1,\dots,B_n}{x}_{\bF_q}$.
    By Proposition \ref{prop:gauss_sum_is_pure}, 
    the second term with $q$ replaced by $q^r$ for various $r$ gives a $q$-Weil function of weight $\leq 2n-(2n-m)-2=m-2$.

    To prove (ii), we show that, for non-trivial characters $A_1\dots,A_n$, arbitrary characters $B_1,\dots,B_n$ and the trivial character $C$, the function 
    \[
        r \mapsto \tghf{n+1}{n+1}{A_1,\dots,A_n, C}{B_1,\dots,B_n,C}{x}_{\bF_{q^r}} - q^r\tghf{n}{n}{A_1,\dots,A_n}{B_1,\dots,B_n}{x}_{\bF_{q^r}}
    \]
    is a $q$-Weil function of weight $\leq m-2$, and then it suffices to use case (i).
    This is proved in the same way by using the following equation for the trivial $C$:
    \[
        \frac{G\big(C\chi\big)G\big(\overline{C\chi}\big)}{G\big(C\big)G\big(\overline{C}\big)}=
        \begin{cases}
            q\chi(-1) & \text{if } \chi\neq\epsilon,\\
            q\chi(-1) - (q-1)C(-1) & \text{if } \chi=\epsilon.
        \end{cases}
    \]
\end{proof}

\section{Monomial Deformations and $p$-adic Hypergeometric Series.}
\subsection{Families of hypersurfaces considered.}

In this subsection, we introduce families of hypersurfaces on which we work, and set some notations concerning it.
Let $X_0$ be the \emph{smooth} hypersurface of $\bP^n_{\bF_q}$ defined by the polynomial
\[
    F_0(T) = c_1T^{a_1}+\dots+c_{n+1}T^{a_{n+1}} \in \bF_q[T_1,\dots,T_{n+1}],
\]
where $c_1,\dots,c_{n+1}\in\bF_q^{\times}$ and where $a_1,\dots,a_{n+1}\in\bN^{n+1}$ with $|a_i|=n+1
~(i=1,\dots,n+1)$, none of $a_i$'s being equal to ${}^\rt(1,1,\dots,1)$.
Here, for $a_i={}^\rt(a_{1i},\dots,a_{n+1,i})$, the notation $T^{a_i}$ denotes the monomial $T_1^{a_{1i}}\dots T_{n+1}^{a_{n+1,i}}$.

The theme of this article is investigating the monomial deformation $X_{\lambda}$ of $X_0$ defined by the polynomial
\[
    F_{\lambda}(T) = c_1T^{a_1}+\dots +c_{n+1}T^{a_{n+1}} - \lambda T_1\dots T_{n+1},
\]
where $\lambda$ moves $\bF_q^{\times}$;
mainly, we restrict our attention to $\lambda$ such that $X_{\lambda}$ is smooth.

By the following proposition, we may and do change the indices so that each diagonal entry of $A$ equals
$n+1$ or $n$.

\begin{proposition}
    Let $A$ be the matrix $A=(a_1, \dots, a_{n+1})$.
    Then, after a suitable change of indices of $a_i$'s, each diagonal entry of $A$ equals $n+1$ or $n$,
    the other entries are either $0$ or $1$, and moreover there exists at most one $1$ in each row.
\label{prop:form_of_A}
\end{proposition}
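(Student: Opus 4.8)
The plan is to extract the shape of $A$ from the Jacobian criterion for smoothness of $X_0$, organised along the coordinate stratification of $\bP^n$. Since $F_0$ is homogeneous, $X_0$ is singular precisely at the points $P\in\bP^n(\overline{\bF_q})$ with $F_0(P)=0$ and $\partial F_0/\partial T_k(P)=0$ for all $k$, and the whole argument consists in turning each forbidden pattern in $A$ into such a point. The basic computational tool is the following reformulation on the open torus: writing $\phi_k:=T_k\,\partial F_0/\partial T_k=\sum_i a_{ki}c_iT^{a_i}$ and $v(T):={}^{\rt}(c_1T^{a_1},\dots,c_{n+1}T^{a_{n+1}})$, one has ${}^{\rt}(\phi_1,\dots,\phi_{n+1})=A\,v(T)$. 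Thus a critical point in the big torus $\Gm^{n+1}$ is exactly a $T$ with $v(T)\in\ker A$; and since the column sums of $A$ all equal $n+1$, the Euler relation gives $F_0=\tfrac1{n+1}\,{}^{\rt}(1,\dots,1)\,A\,v$, so such a point automatically lies on $X_0$ (when $p\nmid n+1$).

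First I would record the elementary consequences. Every variable must occur in $F_0$: if $T_k$ did not, the coordinate point $e_k=[0:\dots:1:\dots:0]$ would lie on $X_0$ (no monomial is a power of $T_k$) and annihilate every partial, contradicting smoothness; hence every row of $A$ is non-zero. Next, the reformulation suggests, and the stratum analysis confirms, that $\det A\ne0$: otherwise a non-zero vector of $\ker A$ is realised as some $v(T)$ on an appropriate coordinate substratum, producing a critical point. A non-singular $A$ then has a non-vanishing term $\prod_i a_{i,\sigma(i)}$ in its determinant expansion, i.e.\ a transversal $\sigma$ with all $a_{i,\sigma(i)}>0$; re-indexing the monomials by $\sigma$, I may assume $a_{ii}>0$ for every $i$.

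The core of the proof is to show that, with a positive diagonal fixed, each column is either $(n+1)e_i$, giving a Fermat monomial $T_i^{n+1}$, or $n\,e_i+e_j$ with $j\ne i$, giving $T_i^nT_j$ — so that $F_0$ is, up to indexing, a Fermat--chain--loop (invertible) polynomial. As the hypotheses exclude the column ${}^{\rt}(1,\dots,1)$ and impose $|a_i|=n+1$, for $n=2$ this already holds; in general I would argue that a column with support of size $\ge 3$, or of size $2$ but not of shape $(n,1)$, makes the partials supported on that column's variables dependent on the adapted subtorus — because there are only $n+1$ monomials in all — and hence yields a $T$ with $v(T)\in\ker A$. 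The identity $|a_i|=n+1$ then forces the diagonal entry to be $n+1$ or $n$ and the unique off-diagonal entry, if any, to be $1$. Finally, re-indexing so that the distinguished base variable of each column lies on the diagonal requires distinct monomials to have distinct base variables, and the clause ``at most one $1$ in each row'' requires distinct pointer monomials to have distinct tails; both are injectivity statements that I would again establish by exhibiting a critical point whenever they fail, since two monomials with a common base, or two pointers with a common tail, create a torus direction along which all $\phi_k$ vanish.

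The main obstacle is precisely this dictionary between ``no critical point'' and the individual combinatorial constraints: for each forbidden configuration one must exhibit an explicit $\overline{\bF_q}$-point, typically on a coordinate subtorus with coordinates chosen as suitable roots of ratios $c_i/c_j$ so that $v(T)$ lands in the relevant kernel, and verify that it lies on $X_0$. A genuinely delicate secondary issue is the characteristic: the partial $\partial F_0/\partial T_k$ sees only $a_{ki}\bmod p$, and the Euler reduction above needs $p\nmid n+1$, so a priori smoothness constrains the exponents modulo $p$ rather than as integers. I would handle this either by arranging the constructions to use only the supports and the degree identity $|a_i|=n+1$, which are characteristic-free, or by invoking the running hypotheses of Subsection~3.1 to guarantee that the exponents entering each test are prime to $p$, so that the integral structure claimed in the proposition is recovered.
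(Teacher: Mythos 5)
Your overall strategy---ruling out each forbidden pattern in $A$ by exhibiting a singular point of $X_0$ on a suitable coordinate stratum---is the same one the paper uses, but the two steps that carry all the weight are left as announcements rather than proofs, and one of them rests on an unjustified reduction. First, the claim that $\det A=0$ (or, what is actually relevant for the Jacobian, that $A\bmod p$ has non-trivial kernel) ``produces a critical point'' requires realizing a kernel vector as $v(T)=(c_iT^{a_i})_i$; a non-zero kernel vector need not lie in the image of the monomial map, and if it has vanishing entries you must pass to a coordinate substratum, where the matrix governing the logarithmic partials is no longer $A$ and the partials with respect to the vanished variables impose extra conditions. This realizability problem is exactly the content you would have to supply, and it is never addressed. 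Second, the ``core'' step---that a column with support of size $\geq 3$, or of size $2$ but not of shape $(n,1)$, forces a singularity---is only paraphrased (``makes the partials \dots dependent on the adapted subtorus''); no point is exhibited. The paper avoids both difficulties by proving a \emph{row} condition instead: if some row $i$ had all entries $\leq n-1$, then at the coordinate point $P_i=[0:\dots:1:\dots:0]$ every monomial $T^{a_j-e_k}$ vanishes (a pure support computation, independent of the coefficients $a_{kj}\bmod p$), so $P_i$ is a singular point; since each column sums to $n+1$ and $n\geq 2$, each column contains at most one entry $\geq n$, so the row condition immediately yields the asserted column shapes after a permutation. The remaining clause (at most one $1$ per row) is then settled by the explicit point $[0:1:x:0:\dots:0]$ with $c_2+c_3x^n=0$ when $T^{a_2}=T_1T_2^n$ and $T^{a_3}=T_1T_3^n$---precisely the kind of explicit construction your sketch says ``one must exhibit'' but never does.

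Your final paragraph correctly flags the characteristic issue but does not resolve it: the Euler reduction needs $p\nmid n+1$, which is \emph{not} among the paper's hypotheses (the running assumption concerns $q$ being prime to $\alpha$ and the $\alpha_i$, not to $n+1$, and the proposition is stated and used before the assumptions of Subsection 3.1 are in force). The paper's argument is characteristic-free because it never divides by $n+1$ and only ever uses supports of monomials together with derivative coefficients that are literally $1$. As written, the proposal is a plausible plan whose decisive steps---the explicit singular points attached to each forbidden configuration, and the passage from a non-trivial kernel to an actual point of $X_0$---are missing.
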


\begin{proof}
    First, let us prove the first property.
    It suffices to show that, for each index $i=1,\dots,n+1$, there exists an index $j$ with $a_{ij}\geq n$.
    Assume that there exists an $i$ that satisfies $a_{ij}\leq n-1~(j=1,\dots,n+1)$.
    Fix such an $i$, and let $P_i$ denote the point $[0:\dots:0:1:0:\dots:0]$ in $\bP_{\bF_q}^n$, where $1$ sits in the $i$-th entry.
    Then, for each $j, k\in\{1,\dots,n+1\}$,
    \[
        \frac{\partial T^{a_j}}{\partial T_k} = 
        \begin{cases} a_{kj}T^{a_j-e_k} & \text{if } a_{kj} \geq 1, \\ 0 & \text{if } a_{kj} = 0.\end{cases}
    \]
    The value of this partial derivative at $P_i$ is zero unless $T^{a_j-e_k}=T_i^n$,
    which is impossible because of the assumption on $i$.
    Therefore, we have shown that
    \[
        \frac{\partial F_0}{\partial T_k}\big(P_i\big) = 0 \quad (k=1,\dots,n+1) \quad \text{and} \quad F_0\big(P_i\big) = 0,
    \]
    and consequently $P_i$ is a singular point of $X_0$, which contradicts the hypothesis.

    The second property follows from the first and from the assumption $|a_i|=n+1$ for all $i$.

    In order to prove the third property, we assume the contrary.
    Then, after a change of coordinates, we may assume that $a_{12}=a_{13}=1$, that is, $T^{a_2}=T_1T_2^n$ and $T^{a_3}=T_1T_3^n$.
    For an element $x$ of $\overline{\bF_q}$ that satisfies $c_2+c_3x^n=0$,
    the point $P=[0:1:x:0:\dots:0]$, which is actually an $\overline{\bF_q}$-rational point of $X_0$,
    gives a singular point of $X_0$.
    In fact, the choice of $x$ shows that $\partial F_0/\partial T_1\big(P\big)=0$,
    and it is straightforward to check that $\partial F_0/\partial T_i\big(P)=0$ for all $i\geq 2$;
    thus we have derived a contradiction.
\end{proof}

In order to write the parameters and the input of the hypergeometric functions,
we introduce some notation.

\begin{proposition}
The kernel of the homomorphism $\bZ^{n+1}\to \bZ^{n+1}$ defined by the matrix $A'=\big(a_{ij}-1\big)_{i,j=1,\dots,n+1}$ is free of rank one
and generated by a vector ${}^\rt(\alpha_1,\dots, \alpha_{n+1})$ with all $\alpha_i>0$.
\label{prop:alpha}
\end{proposition}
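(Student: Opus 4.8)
The plan is to realize $A'$ as $A-J$, where $J$ denotes the all-ones matrix and $u={}^{\rt}(1,\dots,1)\in\bZ^{n+1}$, to reduce the whole statement to the invertibility of $A$ over $\bQ$ together with the single scalar identity ${}^{\rt}u\,A^{-1}u=1$, and then to extract positivity of the generator from the combinatorial shape of $A$ provided by Proposition \ref{prop:form_of_A}. Since a subgroup of $\bZ^{n+1}$ is automatically free of rank $\dim_{\bQ}(\ker A'\otimes\bQ)$, it suffices to show that $\ker A'\otimes\bQ$ is a line spanned by a vector all of whose entries have the same sign.

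First I would prove invertibility. By Proposition \ref{prop:form_of_A} each row of $A$ has diagonal entry $n$ or $n+1$, off-diagonal entries in $\{0,1\}$, and at most one off-diagonal $1$; since $n\ge 2$, the diagonal entry strictly exceeds the off-diagonal absolute row sum, so $A$ is strictly diagonally dominant and hence invertible over $\bQ$. Next, the hypothesis $|a_j|=n+1$ for every $j$ says exactly that all column sums of $A$ equal $n+1$, i.e. ${}^{\rt}u\,A=(n+1)\,{}^{\rt}u$; multiplying on the right by $A^{-1}$ yields ${}^{\rt}u\,A^{-1}=(n+1)^{-1}\,{}^{\rt}u$ and therefore ${}^{\rt}u\,A^{-1}u=1$. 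Setting $y:=A^{-1}u$ (so $|y|={}^{\rt}u\,y=1$), this gives
\[
    A'y=Ay-Jy=u-({}^{\rt}u\,A^{-1}u)\,u=u-u=0 .
\]
Conversely, if $A'x=0$ then $Ax=Jx=|x|\,u$, so $x=|x|\,A^{-1}u=|x|\,y$; hence $\ker A'\otimes\bQ=\bQ\,y$ is one-dimensional, and the integral kernel is free of rank one, generated by the primitive integral vector on the line $\bQ\,y$.

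It remains to show that $y=A^{-1}u$ has all entries of the same sign, and this positivity is the main obstacle, requiring the finer structure of Proposition \ref{prop:form_of_A} rather than mere diagonal dominance. I would encode $A$ as a directed graph on $\{1,\dots,n+1\}$ with an edge $j\to i$ whenever $a_{ij}=1$ and $i\ne j$. Because each column and each row carries at most one off-diagonal $1$, every vertex has out-degree and in-degree at most one, so the graph is a disjoint union of simple directed paths and cycles; correspondingly $A$ is block diagonal, and I would solve $A_c\,y_c=u_c$ on each block $c$. On a cycle block one has $A_c=nI+P$ with $P$ a permutation matrix, so $A_c u_c=(n+1)u_c$ and $y_c=(n+1)^{-1}u_c>0$. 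On a path block $A_c$ is lower bidiagonal with diagonal $(n,\dots,n,n+1)$ and unit subdiagonal, and solving from the source downward gives entries in $(0,1)$ by an immediate induction: the source entry is $1/n$, and each subsequent entry is $(1-y_{i-1})/n$ or $(1-y_{i-1})/(n+1)$, which stays in $(0,1)$ since $n\ge 2$. Thus $y>0$ on every component, so the primitive generator $\alpha={}^{\rt}(\alpha_1,\dots,\alpha_{n+1})$ may be chosen with all $\alpha_i>0$, completing the proof.
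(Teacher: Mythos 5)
Your proof is correct, but it takes a genuinely different route from the paper's. The paper first observes that $A'$ is singular because $(1,\dots,1)A'=0$, and then proves directly that every non-zero integer vector in $\ker A'$ has all entries of the same (strict) sign: it sorts the entries, supposes a sign change $x_i>0,\ x_{i+1}\leq 0$, and derives a contradiction from the inequality constraints imposed by the rows of $A'$ (using that the off-diagonal entries of $A'$ are $0$ or $1$ with at most one $1$ per row, exactly the structure of Proposition \ref{prop:form_of_A} that you also invoke). That single sign argument yields both the rank-one statement and the positivity at once, and never needs $A$ itself to be invertible. You instead write $A'=A-u\,{}^{\rt}u$, get invertibility of $A$ from strict diagonal dominance, pin down $\ker A'\otimes\bQ=\bQ\,A^{-1}u$ via the identity ${}^{\rt}u\,A^{-1}u=1$ coming from the column sums $|a_j|=n+1$, and then prove positivity of $A^{-1}u$ by decomposing $A$ into path and cycle blocks of a degree-at-most-one digraph and solving the resulting bidiagonal systems. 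Your version is longer but buys an explicit description of the generator: up to scaling $\alpha\propto A^{-1}u$, computable block by block (it reproduces, e.g., $\alpha={}^{\rt}(2,4,3,3)$ in Example \ref{ex:section1_example}(ii)), whereas the paper's argument is shorter and purely qualitative. Both correctly rely on the normal form of $A$ from Proposition \ref{prop:form_of_A}; the only cosmetic point in yours is the degenerate one-vertex component, where the diagonal entry is $n+1$ and the entry of $y$ is $1/(n+1)$ rather than the $1/n$ of a genuine path source, which does not affect the conclusion.
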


\begin{proof}
    The matrix $A'$ is not invertible since $(1, 1,\dots, 1) A' = 0$.

    It, therefore, suffices to show that every entries of a non-zero vector ${}^\rt(x_1,\dots,x_{n+1})$
    in the kernel have the same sign.
    By multiplying every entries by $-1$ if necessary, we may assume that at least two entries are non-negative.
    After a suitable change of indices, we may assume that $x_1 \geq x_2 \geq \dots \geq x_{n+1}$;
    consequently, $x_1>0$ and $x_2\geq 0$.
    
    Now, if an index $i\in\{1,\dots,n\}$ satisfies $x_i>0$ and $x_{i+1}\leq 0$, then 
    \[
        (a_{11}-1)x_1 + \sum_{j=i+1}^{n+1} (a_{1j}-1)x_j = \sum_{j=2}^i -(a_{1j}-1)x_j.
    \]
    The left-hand side is greater than or equal to $(n-1)x_1$ since $(a_{1j}-1)x_j\geq 0$ for $j=i+1,\dots,n+1$.
    Now, $-(a_{1j}-1)$ being $0$ or $1$ for $j\neq 1$, the right-hand side is less than or equal to $(i-1)x_1$.
    This shows that $i=n$, which implies the contradiction
    \[
        0 = \sum_{j=1}^n (a_{n+1,j}-1)x_j + (a_{n+1,n+1}-1)x_{n+1}<0
    \]
    because $a_{n+1,j}=0$ for at least one $j\in\{1,\dots,n\}$.
\end{proof}

We freely use the notation $\alpha_1,\dots,\alpha_{n+1}$, whose choice is obviously unique, throughout this article.
The sum $\sum_{i=1}^{n+1} \alpha_i$ is denoted by $\alpha$.
Throughout this section, we always assume the following condition holds.

\begin{assumption}
    $q$ is relatively prime to all $\alpha_i$'s and to $\alpha$.
    \label{asm}
\end{assumption}

\begin{definition}
    We define an element $C\in\bF_q^{\times}$ by
    \[
        C = \alpha^{\alpha}\frac{c_1^{\alpha_1}}{\alpha_1^{\alpha_1}}\dots \frac{c_{n+1}^{\alpha_{n+1}}}{\alpha_{n+1}^{\alpha_{n+1}}}.
    \]
    \label{def:C}
\end{definition}

\begin{example}
    (i) In the case of the Dwork family, that is, if $F_0(T)=T_1^{n+1}+\dots+T_{n+1}^{n+1}$,
    then ${}^{\rt}(\alpha_1,\alpha_2,\dots,\alpha_{n+1}) = {}^{\rt}(1, 1, \dots, 1)$.
    Since $\alpha=n+1$, we have $C=(n+1)^{n+1}$.

    (ii) Let us consider the following example discussed by Yu and Yui \cite[(4.8.1)]{Yu-Yui:KSFHFF}:
    $n=3$ and $F_0(T)=T_1^4+T_1T_2^3+T_3^4+T_4^4$.
    Then, we have ${}^{\rt}(\alpha_1, \alpha_2,\alpha_3,\alpha_4) = {}^{\rt}(2, 4, 3, 3)$.
    Since $\alpha=12$, we have $C=2^{14}\cdot 3^6$.
    \label{ex:section1_example}
\end{example}

\subsection{Formal group law.}

In this subsection, we recall some facts on formal group laws and a special case of Artin--Mazur functors,
and apply the theory to our case.
For the basic language of formal group laws, the reader may consult Hazewinkel's book \cite{Hazewinkel:FGA}.

Let $R$ be a (commutative unitary) ring.
A one-dimensional commutative formal group law over $R$, which we simply say ``a formal group law over $R$" in this article,
is a formal power series $G(X,Y)\in R[[X,Y]]$ that satisfies some conditions corresponding to group axioms
\cite[1.1]{Hazewinkel:FGA}.
A logarithm of the formal group law $G$ is a formal power series $l(\tau)\in R[[\tau]]$ that satisfies
$l(\tau)\equiv\tau\pmod{\deg\,\,2}$ and $G(X, Y) = l^{-1}\big(l(X) + l(Y)\big)$.
Equivalently, a logarithm of $G$ is a strict isomorphism of $G$ to the additive group $\widehat{\bG_\ra}$
defined by $\widehat{\bG_\ra}(X, Y) = X + Y$.
A logarithm of a formal group law is unique if the ring $R$ is of characteristic zero, and
it exists if $R$ contains $\bQ$.
If $R$ is of characteristic zero, we also say that $l(\tau)\in R\otimes_\bZ\bQ[[\tau]]$ is a logarithm of
the formal group $G(X, Y)$ over $R$, if it is the logarithm of $G(X, Y)$ viewed as a formal group law
over $R\otimes_\bZ\bQ$.

For a formal group law $G(X, Y)$ over a ring $R$ and a positive integer $m$,
define a formal power series $[m]_G(X)$ inductively by
\[
    [1]_G(X) = X,\qquad [m]_G(X)=G\big(X, [m-1]_G(X)\big)\quad (m\geq 2).
\]
Assume that $R$ is a field of characteristic $p$.
If $[p]_G(X)$ is non-zero, then the lowest term of $[p]_G(X)$ is of degree $p^h$ for some positive integer $h$.
We call this $h$ the \emph{height} of $G(X, Y)$; if $[p]_G(X)$ is zero, the height is defined to be infinity.


\begin{lemma}
    Let $R$ be a ring of characteristic zero whose reduction modulo $p$ is a field $k$.
    Let $G(X, Y)$ be a formal group law over $R$ with logarithm of the form
    $l(\tau)=\sum_{s=0}^{\infty}a_s\tau^{p^s}/p^s~(a_s\in R,~a_0=1)$.
    Then, the formal group law $\overline{G}(X, Y)\defeq G(X, Y) \mod ~p$ over $k$ is of height one if and only if $a_1\not\equiv 0 \pmod{p}$.
    \label{lem:height1}
\end{lemma}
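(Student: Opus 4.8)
The plan is to read off the height of $\overline{G}$ directly from the lowest-degree terms of $[p]_{\overline{G}}(X)$, using the logarithm to compute $[p]_G(X)$ explicitly modulo $p$. Since $l$ is a strict isomorphism onto $\widehat{\bG_\ra}$, it carries $[p]_G$ to multiplication by $p$, so $l\big([p]_G(X)\big)=p\,l(X)$ and hence
\[
    [p]_G(X) = l^{-1}\big(p\,l(X)\big)
\]
as power series over $R\otimes_\bZ\bQ$. Although this expression involves denominators that are powers of $p$, the left-hand side lies in $R[[X]]$ because $[p]_G$ is built from $G\in R[[X,Y]]$; this integrality is what makes reduction mod $p$ meaningful and should be kept in mind throughout.

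First I would set $u = p\,l(X) = \sum_{s\ge 0} a_s p^{\,1-s} X^{p^s}$, so that the monomials appearing in $u$ sit in degrees $1, p, p^2,\dots$ with coefficients $p, a_1, a_2/p, \dots$ respectively. Writing $l^{-1}(\tau)=\sum_{k\ge 1} b_k\tau^k$ with $b_1=1$, I would extract the coefficient of $X^i$ in $l^{-1}(u)$ for each $i\le p$ by counting the ways $i$ can be written as a sum $p^{s_1}+\dots+p^{s_k}$ of the degrees occurring in $u$. The elementary observation driving everything is that, since $i\le p$, such a representation can only use the exponent $1$ (that is, $p^0$) and, when $i=p$, a single $p$: concretely, for $2\le i\le p-1$ only $k=i$ with all $s_j=0$ contributes, and for $i=p$ exactly the two cases $k=1$ (one factor of degree $p$) and $k=p$ (all $p$ factors of degree $1$) contribute.

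The computation of the relevant $b_k$'s is the technical heart, and I would obtain them from $l\big(l^{-1}(\tau)\big)=\tau$: comparing coefficients below degree $p$ forces $b_i=0$ for $2\le i\le p-1$, and comparing the coefficient of $\tau^p$ gives $b_p+a_1/p=0$, i.e. $b_p=-a_1/p$. Feeding this back, the coefficient of $X^i$ in $[p]_G(X)$ is $b_i p^i=0$ for $1\le i\le p-1$ (the case $i=1$ giving $p$), while the coefficient of $X^p$ is
\[
    a_1 + b_p\,p^p = a_1 - a_1 p^{\,p-1}.
\]
Since $p\mid p^{\,p-1}$, this reduces to $a_1\bmod p$, whereas every lower coefficient reduces to $0$.

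Finally I would conclude: if $a_1\not\equiv 0\pmod p$ then $[p]_{\overline{G}}(X)$ has no term of degree $<p$ and a nonzero term of degree $p=p^1$, so the height is $1$; if $a_1\equiv 0\pmod p$ then all terms of degree $\le p$ vanish, so $[p]_{\overline{G}}(X)$ is either $0$ (height $\infty$) or has lowest term of degree $p^h$ with $h\ge 2$, and in either case the height is $\ne 1$. I expect the main obstacle to be the bookkeeping in the penultimate paragraph—namely, tracking the powers of $p$ in the denominators of $l$ and $l^{-1}$ and verifying that the a priori fractional quantity $b_p p^p$ combines with $a_1$ into an element of $R$ whose reduction is exactly $\bar a_1$; once that cancellation is in hand, the vanishing of the intermediate coefficients follows cleanly from the constraint $i\le p$.
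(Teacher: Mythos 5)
Your proof is correct and follows essentially the same route as the paper: both arguments rest on the identity $l\big([p]_G(X)\big)=p\,l(X)$ and extract the coefficient of $X^p$ to identify it with $a_1$ modulo $p$. The only difference is cosmetic—you apply $l^{-1}$ to the known series $p\,l(X)$ and compute the inverse coefficients $b_i$ explicitly, whereas the paper substitutes the unknown expansion of $[p]_G(X)$ into $l$ and invokes the general fact that its coefficients below degree $p$ are divisible by $p$; your version in fact shows those coefficients vanish exactly.
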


\begin{proof}
    Let $u$ denote the coefficient of $X^p$ in the formal power series $[p]_G(X)$.
    Then, $[p]_G(X)$ is of the form
    \begin{equation}
        [p]_G(X) = pX + v_2X^2+\dots+v_{p-1}X^{p-1}+uX^p + \big(\text{degree }\geq p+1\big)
        \label{eq:multiplication_p}
    \end{equation}
    where $v_2,\dots,v_{p-1}\equiv 0 \pmod{p}$;
    the formal group law $\overline{G}$ is of height one if and only if $u\not\equiv 0\pmod{p}$.
    The definition of the logarithm implies the equation
    \[
        l\big([p]_G(X)\big) = pl\big(X\big) = pX + a_1X^p + a_2\frac{X^{p^2}}{p} + \dots.
    \]
    By substituting (\ref{eq:multiplication_p}), the coefficient of $X^p$ in the left-hand side equals $u$ modulo $p$,
    and the lemma follows.
\end{proof}


Next, we briefly recall the theory of formal groups.
Let $R$ be a ring and let $\NilAlg_R$ denote the category of nilpotent $R$-algebras.
For a natural number $n$, the functor $\widehat{\bA}^n\colon\NilAlg_R\to\cSet$ is defined to be $\widehat{\bA}^n(N)=N^n$. We say that a functor $G\colon\NilAlg_R\to\Ab$ is an \emph{$n$-dimensional formal group}
if its composition with the forgetful functor $\Ab\to\cSet$ is isomorphic to $\widehat{\bA}^n$.
The category of (one-dimensional commutative) formal group laws and the category of one-dimensional formal groups are equivalent.

Let $X$ be a scheme over a a perfect field $k$ of characteristic $p$, and $i$ a natural number.
Then, we define the {\em Artin--Mazur functor} by
\[
    H^i\big(X, \widehat{\Gm}\big)\colon \NilAlg_R\to \Ab;\quad A\mapsto H^i\big(X, \widehat{\Gm}(\sO_X\otimes_R A)\big).
\]
Although this may or may not be a formal group, 
it actually is a formal group in the case of our interest.
In fact, the Artin--Mazur functor is deeply related to the Witt cohomology of a variety over $k$;
if $X$ be a complete intersection of dimension $d \geq 2$,
then $H^d\big(X, \widehat{\Gm}\big)$ is a formal group,
and the Cartier--Dieudonn\'e module of its formal group is isomorphic to $H^d\big(X, W\sO_X\big)$
\cite[II (4.2), (4.3)]{Artin-Mazur:FGAV}.

Moreover, assume that $k=\bF_q$ and that $X$ is (complete intersection of degree $d\geq 2$ and)
proper and smooth over $k$. 
Then, recall that the Witt cohomology is, after the base extension to $K:=\Frac(W(k))$, isomorphic to 
the maximal subspace $H^d_{\cris}\big(X/W(k)\big)_{K}^{<1}$ of the crystalline cohomology
$H^d_{\cris}\big(X/W(k)\big)_K$ on which the Frobenius acts with slope $<1$.
If the height $h$ of the formal group law $H^d\big(X, \widehat{\Gm}\big) \bmod p$ is finite,
then, $h$ is equal to the dimension of $H^d_{\cris}\big(X/W(k)\big)_K^{<1}$ since, in fact, both equal the rank of
Cartier--Dieudonn\'e module associated to the latter \cite[II, Remarques 2.15 (a)]{Illusie:CRWCC},
\cite[(A.13)]{Stienstra-Beukers:PFEFBGCEKS}.
Because the slope of Frobenius equals $(h-1)/h$,
the first slope of Newton polygon of the crystalline cohomology is zero if and only if $h=1$ \cite{Demazure:LPG}.
In this case, the unique eigenvalue of the Frobenius on $H^d_{\cris}\big(X/W(k)\big)_K$ that is a $p$-adic unit
is called unit-root of $X_{\lambda}$.

%

\begin{theorem}
    Let $R$ be a flat $W(\bF_q)$-algebra, and let $\Lambda$ be an element of $R$.
    Let $\Lambda$ be an element of $R$ such that $\widetilde{X_{\Lambda}}$ is flat over $R$.
    Let $\widetilde{X_{\Lambda}}$ be the hypersurface of $\bP^n_{R}$ defined by
    \[
        \widetilde{F_{\Lambda}}(T) \defeq \widetilde{c_1}T^{a_1}+\dots+\widetilde{c_{n+1}}T^{a_{n+1}}-\Lambda T_1\dots T_{n+1} \in R[T_1,\dots,T_{n+1}],
    \]
    and assume that $\widetilde{X_{\Lambda}}$ is flat over $R$.
    Then, the Artin--Mazur functor $H^{n-1}(\widetilde{X_{\Lambda}}, \widehat{\Gm})$ is a formal group
    the logarithm $l(\tau)$ of whose formal group law is given by
    \[
        \sum_{m=0}^{\infty}(-\Lambda)^m\ghf{\alpha}{\alpha-1}{\frac{-m}{\alpha},\frac{-m+1}{\alpha},\dots,\frac{-m+\alpha-1}{\alpha}}
        {\frac{1}{\alpha_1},\dots,\frac{\alpha_1-1}{\alpha_1},1,\frac{1}{\alpha_2},\dots,\frac{\alpha_2-1}{\alpha_2},1,\dots,\frac{1}{\alpha_{n+1}},\dots,\frac{\alpha_{n+1}-1}{\alpha_{n+1}}}{\widetilde{C}\Lambda^{-\alpha}}\frac{\tau^{m+1}}{m+1}.
    \]
    \label{prop:fgl_formula}
\end{theorem}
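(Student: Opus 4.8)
The plan is to reduce the statement to a coefficient computation via a formula of Stienstra type, and then to evaluate the resulting coefficients combinatorially. Since $\widetilde{X_{\Lambda}}$ is a hypersurface of degree $n+1$ in $\bP^n_R$ that is flat over $R$, the Artin--Mazur functor $H^{n-1}(\widetilde{X_{\Lambda}}, \widehat{\Gm})$ is a formal group, and its logarithm is governed by the central coefficients of the powers of $\widetilde{F_{\Lambda}}$ \cite{Stienstra-Beukers:PFEFBGCEKS}: writing $a_m$ for the coefficient of the monomial $(T_1\dots T_{n+1})^m$ in $\widetilde{F_{\Lambda}}(T)^m$, one has
\[
    l(\tau) = \sum_{m=0}^{\infty} a_m \frac{\tau^{m+1}}{m+1}.
\]
The degrees are consistent because $|a_i|=n+1$ forces both $\widetilde{F_{\Lambda}}^m$ and $(T_1\dots T_{n+1})^m$ to be homogeneous of degree $m(n+1)$. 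It therefore suffices to prove that $a_m$ equals $(-\Lambda)^m$ times the stated hypergeometric series evaluated at $\widetilde{C}\Lambda^{-\alpha}$; note that at $m=0$ this reads $a_0=1$, matching the normalization $l(\tau)\equiv\tau\pmod{\deg 2}$.

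First I would expand $\widetilde{F_{\Lambda}}^m$ by the multinomial theorem, indexing the terms by tuples $(k_0,k_1,\dots,k_{n+1})$ of non-negative integers with $\sum_{i} k_i = m$, where $k_0$ is the multiplicity of $-\Lambda T_1\dots T_{n+1}$ and $k_i$ that of $\widetilde{c_i}T^{a_i}$. Requiring the resulting monomial to be $(T_1\dots T_{n+1})^m$ means that, for each $j$, the exponent $k_0+\sum_{i=1}^{n+1}a_{ji}k_i$ equals $m$; after eliminating $k_0 = m-\sum_{i\geq 1}k_i$ this becomes the homogeneous system $\sum_{i=1}^{n+1}(a_{ji}-1)k_i = 0$, that is, $(k_1,\dots,k_{n+1})$ lies in the kernel of $A'$. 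By Proposition \ref{prop:alpha} this kernel is generated by ${}^\rt(\alpha_1,\dots,\alpha_{n+1})$, so the contributing tuples are precisely $k_i = s\alpha_i$ for $s\in\bZ_{\geq 0}$ with $k_0 = m-s\alpha\geq 0$. Assembling the multinomial coefficients gives
\[
    a_m = (-\Lambda)^m \sum_{s=0}^{\lfloor m/\alpha\rfloor} \frac{m!\,(-1)^{s\alpha}\,\Lambda^{-s\alpha}}{(m-s\alpha)!\,\prod_{i=1}^{n+1}(s\alpha_i)!}\,\prod_{i=1}^{n+1}\widetilde{c_i}^{\,s\alpha_i}.
\]

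Next I would match this with the hypergeometric series using the Pochhammer multiplication identity $(a)_{Ns} = N^{Ns}\prod_{j=0}^{N-1}\big((a+j)/N\big)_s$. Applied with $a=-m$ and $N=\alpha$, it rewrites the product of the upper parameters $\tfrac{-m}{\alpha},\dots,\tfrac{-m+\alpha-1}{\alpha}$ at $s$ as $(-m)_{\alpha s}/\alpha^{\alpha s}$; since $(-m)_{\alpha s}=(-1)^{\alpha s}\,m!/(m-\alpha s)!$ and vanishes exactly when $\alpha s>m$, this reproduces both the sign and the truncation of the finite sum above. Applied with $a=1$ and $N=\alpha_i$ to each block $\tfrac{1}{\alpha_i},\dots,\tfrac{\alpha_i-1}{\alpha_i}$ of the lower parameters, and combined with the $n$ lower parameters equal to $1$ together with the implicit factor $(1)_s$, it collapses the denominator to $\prod_{i}(s\alpha_i)! / \prod_i \alpha_i^{\,s\alpha_i}$. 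Finally, expanding $(\widetilde{C}\Lambda^{-\alpha})^s$ through the definition of $\widetilde{C}$ (the analogue of Definition \ref{def:C} with each $c_i$ replaced by $\widetilde{c_i}$) produces the factors $\alpha^{\alpha s}$ and $\prod_i\widetilde{c_i}^{\,s\alpha_i} / \prod_i\alpha_i^{\,s\alpha_i}$, so that all powers of $\alpha$ and of the $\alpha_i$ cancel and exactly the summand of $a_m/(-\Lambda)^m$ remains.

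I expect the principal obstacle to be not conceptual but the bookkeeping in this last step: verifying that the normalizing constant $\widetilde{C}$ is precisely what is needed for the powers of $\alpha$ and of the $\alpha_i$ to cancel, and that the automatic truncation produced by the vanishing of $(-m)_{\alpha s}$ coincides with the geometric constraint $k_0=m-s\alpha\geq 0$. A secondary point requiring care is the justification that the coefficient formula for the logarithm is valid over the flat $W(\bF_q)$-algebra $R$, and not merely after reduction modulo $p$; this is where the flatness hypotheses on $R$ and on $\widetilde{X_{\Lambda}}$ enter.
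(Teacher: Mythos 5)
Your proposal is correct and follows essentially the same route as the paper's own proof: invoke Stienstra's formula identifying the logarithm coefficients with the coefficients of $(T_1\dots T_{n+1})^m$ in $\widetilde{F_{\Lambda}}^m$, reduce the multinomial index set to multiples of ${}^{\rt}(\alpha_1,\dots,\alpha_{n+1})$ via Proposition \ref{prop:alpha}, and match the resulting sum with the hypergeometric series through the Pochhammer multiplication identity. The only (immaterial) difference is the reference used for the coefficient formula.
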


\begin{proof}
    We already know that $H^{n-1}(\widetilde{X_{\Lambda}}, \widehat{\Gm})$ is a formal group.
    The theorem of Stienstra \cite[Theorem 1]{Stienstra:FGLAV} shows that
    the coefficient of $\tau^{m+1}/(m+1)$ of its logarithm equals
    the coefficient of $T_1^m\dots T_{n+1}^m$ of $\widetilde{F_{\Lambda}}^m$.
    
    Now, let us calculate the coefficient of the latter.
    Looking at the binomial expansion


    \[
        \widetilde{F_{\Lambda}}(T)^m = \sum_{m_1+\dots+m_{n+1}+m'=m}\frac{m!}{m_1!\dots m_{n+1}!m'!}\prod_{i=1}^{n+1}\widetilde{c_i}^{m_i}T^{m_ia_i}(-\Lambda T_1\dots T_{n+1})^{m'},
    \]
    we notice that the coefficient of $T_1^m\dots T_{n+1}^m$ is the sum
    \[
        \sum_{(m_1,\dots,m_{n+1},m')}\frac{m!}{m_1!\dots m_{n+1}!m'!}\cdot \widetilde{c_1}^{m_1}\dots \widetilde{c_{n+1}}^{m_{n+1}}(-\Lambda)^{m'},
    \]
    where the index runs so that
    \begin{eqnarray*}
        m_1a_{i1} + \dots + m_{n+1}a_{i,n+1}+m' &=& m \quad (i=1,\dots, n+1),\\
        m_1 + \dots + m_{n+1} + m' &=& m.
    \end{eqnarray*}
    By Proposition \ref{prop:alpha}, an $(n+2)$-tuple $(m_1,\dots,m_{n+1},m)$ satisfies this condition if and only if the equation
    \[
        (m_1,\dots,m_{n+1}) = k(\alpha_1,\dots,\alpha_{n+1}),\quad m' = m-k\alpha
    \]
    holds for a natural number $k$. This shows that the coefficient equals
    \begin{equation}
        \sum_{k\geq 0, m\geq k\alpha}\frac{m!}{(k\alpha_1)!\dots (k\alpha_{n+1})!(m-k\alpha)!}\widetilde{c_1}^{k\alpha_1}\dots \widetilde{c_{n+1}}^{k\alpha_{n+1}}(-\Lambda)^{m-k\alpha}.
        \label{eq:coefficient_temp}
    \end{equation}

    On the other hand, the number
    \begin{equation}
        (-\Lambda)^m\ghf{\alpha}{\alpha-1}{\frac{-m}{\alpha},\frac{-m+1}{\alpha},\dots,\frac{-m+\alpha-1}{\alpha}}
        {\frac{1}{\alpha_1},\dots,\frac{\alpha_1-1}{\alpha_1},1,\frac{1}{\alpha_2},\dots,\frac{\alpha_2-1}{\alpha_2},1,\dots,\frac{1}{\alpha_{n+1}},\dots,\frac{\alpha_{n+1}-1}{\alpha_{n+1}}}{\widetilde{C}\Lambda^{-\alpha}}.
        \label{eq:coefficient_ghf}
    \end{equation}
    equals, by definition of hypergeometric function,
    \begin{eqnarray*}
        && (-\Lambda)^m\sum_{k=0}^{\infty}\frac{\big(\frac{-m}{\alpha}\big)_k\big(\frac{-m+1}{\alpha}\big)_k\dots\big(\frac{-m+\alpha-1}{\alpha}\big)_k}{\big(\frac{1}{\alpha_1}\big)_k\dots\big(\frac{\alpha_1}{\alpha_1}\big)_k\dots\big(\frac{1}{\alpha_{n+1}}\big)_k\dots\big(\frac{\alpha_{n+1}}{\alpha_{n+1}}\big)_k}
       \left(\frac{\alpha^{\alpha}}{\Lambda^{\alpha}}\frac{\widetilde{c_1}^{\alpha_1}}{\alpha_1^{\alpha_1}}\dots\frac{\widetilde{c_{n+1}}^{\alpha_{n+1}}}{\alpha_{n+1}^{\alpha_{n+1}}}\right)^k \\
       &=& \sum_{k=0}^{\infty}\frac{\alpha^{k\alpha}\big(\frac{-m}{\alpha}\big)_k\big(\frac{-m+1}{\alpha}\big)_k\dots\big(\frac{-m+k\alpha-1}{\alpha}\big)_k}{A_{1,k}\dots A_{n+1,k}} \widetilde{C'_k}(-\Lambda)^{m-k\alpha},
    \end{eqnarray*}
    where
    \[
        A_{i,k} = \alpha_i^{k\alpha_i}\big(\tfrac{1}{\alpha_i}\big)_k\dots\big(\tfrac{\alpha_i-1}{\alpha_i}\big)_k\big(\tfrac{\alpha_i}{\alpha_i}\big)_k
        \quad (i=1,\dots,n+1)
    \]
    and $C'_k = c_1^{k\alpha_1}\dots c_{n+1}^{k\alpha_{n+1}}$.
    Moreover, each summand is $0$ if $k-m/\alpha\geq 0$.
    We, in fact, have $A_{i,k}=(k\alpha_i)!$ and
    \begin{eqnarray*}
        && (-1)^{k\alpha}\alpha^{k\alpha}\big(\tfrac{-m}{\alpha}\big)_k\big(\tfrac{-m+1}{\alpha}\big)_k\dots\big(\tfrac{-m+k\alpha-1}{\alpha}\big)_k \\
        &=& (-1)^{k\alpha} (-m)(-m+1)\dots (-m+k\alpha-1) = \frac{m!}{(m-k\alpha)!}.
    \end{eqnarray*}
    This shows that two numbers (\ref{eq:coefficient_temp}) and (\ref{eq:coefficient_ghf}) coincide with each other.
\end{proof}

\subsection{Unit root.}

Let $\sF(x)$ denote the formal power series
\begin{equation}
    \ghf{\alpha}{\alpha-1}{\frac{1}{\alpha},\frac{2}{\alpha},\dots,\frac{\alpha-1}{\alpha},1}
    {\frac{1}{\alpha_1},\dots,\frac{\alpha_1-1}{\alpha_1},1,\frac{1}{\alpha_2},\dots,\frac{\alpha_2-1}{\alpha_2},1,\dots,\frac{1}{\alpha_{n+1}},\dots,\frac{\alpha_{n+1}-1}{\alpha_{n+1}}}{\widetilde{C}x} \in W\big(\bF_q\big)[[x]].
    \label{eq:padic_ghf}
\end{equation}
In this subsection, by a method of Stienstra and Beukers \cite{Stienstra-Beukers:PFEFBGCEKS},
we prove that $\sF(x)$ gives us information on unit root of $X_{\lambda}$.
(Our argument is a generalization of the argument by Yu \cite[Section 5]{Yu:VURDFCYV}.)

For positive integers $m$ and $s$, let $\sF_{m,s}(x)$ denote the polynomial obtained by truncating $\sF(x)$ 
up to degree $mp^s-1$.
Let $\sR$ denote the $p$-adic completion of the ring
\[
    W\big(\bF_q\big)\big[x, \big(x\sF_{1,1}(x)\big)^{-1}\big],
\]
to which the Frobenius endomorphism $\sigma$ on $W\big(\bF_q\big)$ extends by $\sigma(x)=x^p$.

\begin{lemma}
    The formal power series
    \[
        f(x) \defeq \frac{\sF(x)}{\sigma\big(\sF(x)\big)} \in W\big(\bF_q\big)[[x]]
    \]
    is actually an element of $\sR$.
    \label{lem:fisseries}
\end{lemma}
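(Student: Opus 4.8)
The plan is to exhibit $f(x)$ as the $p$-adic limit, inside the complete ring $\sR$, of the partial Frobenius ratios
\[
  g_s(x) \defeq \frac{\sF_{1,s+1}(x)}{\sigma\big(\sF_{1,s}(x)\big)} \qquad (s\ge 1),
\]
and to control both the denominators and the convergence by Dwork's congruences for the coefficients of $\sF$. First I would make these coefficients explicit: writing $\sF(x)=\sum_{k\ge 0}a_k x^k$, the Gauss multiplication formula for Pochhammer symbols collapses all the parameters, and the factor $\widetilde C$ cancels the powers of $\alpha$ and of the $\alpha_i$, leaving
\[
  a_k = \frac{(\alpha k)!}{(\alpha_1 k)!\cdots(\alpha_{n+1}k)!}\,
  \widetilde{c_1}^{\,\alpha_1 k}\cdots\widetilde{c_{n+1}}^{\,\alpha_{n+1}k},
\]
a multinomial coefficient (recall $\alpha_1+\dots+\alpha_{n+1}=\alpha$) times a unit of $W(\bF_q)$; in particular $a_k\in W(\bF_q)$ and $a_0=1$. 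This is the same type of computation that appears in the proof of Theorem \ref{prop:fgl_formula}.

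The structural input is the family of Dwork congruences satisfied by such factorial--ratio coefficients \cite{Dwork:PC}, \cite{Stienstra-Beukers:PFEFBGCEKS}. The first of these reads $a_{pk+i}\equiv a_i\,\sigma(a_k)\pmod p$ for $0\le i<p$: for the multinomial part this is a Lucas/Kummer congruence (both sides vanishing exactly when a carry occurs in base $p$), and for the unit part it is $\sigma(\widetilde{c_j})\equiv\widetilde{c_j}^{\,p}\pmod p$. Iterating it yields the mod $p$ factorization
\[
  \sF_{1,s}(x)\equiv\prod_{j=0}^{s-1}\sigma^{j}\big(\sF_{1,1}(x)\big)
  \equiv \sF_{1,1}(x)^{\frac{p^s-1}{p-1}}\pmod p,
\]
the last equality being the Frobenius identity $\sigma^{j}(\sF_{1,1})\equiv\sF_{1,1}^{\,p^{j}}\pmod p$ in characteristic $p$. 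Hence $\sigma(\sF_{1,s})$ reduces modulo $p$ to a power of $\sF_{1,1}$, so it is a unit in $\sR/p\sR$; as $\sR$ is $p$-adically complete this forces $\sigma(\sF_{1,s})$ to be a unit in $\sR$, and therefore $g_s\in\sR$ for every $s$.

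Next, the higher Dwork congruences give $g_s\equiv g_{s-1}\pmod{p^s}$, so $(g_s)_s$ is Cauchy in the complete ring $\sR$ and converges to some $g\in\sR$. I would then identify $g$ with $f$ by comparing coefficients: for each fixed degree $d$ the coefficient of $x^d$ in $g_s=\sF_{1,s+1}\cdot\sigma(\sF_{1,s})^{-1}$ is eventually constant in $s$ and equal to that of $\sF\cdot\sigma(\sF)^{-1}=f$, because $\sF_{1,s+1}$ and $\sigma(\sF_{1,s})$ agree with $\sF$ and $\sigma(\sF)$ to arbitrarily high order in $x$. Since $p$-adic convergence of $g_s$ to $g$ forces coefficientwise $p$-adic convergence, we conclude $f=g\in\sR$.

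The main obstacle is precisely the $p$-adic refinement, namely the Dwork congruences themselves: the mod $p$ factorization above is elementary, but upgrading it to $g_s\equiv g_{s-1}\pmod{p^s}$ (equivalently, to the full Dwork congruences for the multinomial sequence $a_k$) is the technical heart of the argument, and it is exactly the point at which the method of Stienstra--Beukers \cite{Stienstra-Beukers:PFEFBGCEKS} and of Yu \cite{Yu:VURDFCYV} is invoked.
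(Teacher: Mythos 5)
Your overall architecture is close to the paper's: both proofs approximate $f$ by ratios of polynomial sections, use a mod-$p$ factorization in terms of $\sF_{1,1}$ to invert the denominators in $\sR$, and then let a Dwork-type congruence force $p$-adic convergence of the ratios. Your explicit formula $a_k=\frac{(\alpha k)!}{(\alpha_1k)!\cdots(\alpha_{n+1}k)!}\prod_i\widetilde{c_i}^{\alpha_ik}$ is correct (the $\widetilde{C}^k$ does cancel the powers of $\alpha$ and the $\alpha_i$ exactly as you say), and the deduction ``denominator is a unit mod $p$, hence a unit in the $p$-adically complete ring $\sR$, hence $g_s\in\sR$ and the limit lies in $\sR$'' is sound.

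The genuine gap is the congruence $g_s\equiv g_{s-1}\pmod{p^s}$, which you correctly identify as the heart of the matter but do not actually establish. Citing \cite[(A.8)]{Stienstra-Beukers:PFEFBGCEKS} does not close it: (A.8) is an implication between equivalent conditions on a sequence (or on the logarithm of a formal group law), so one must still verify its hypothesis (i) for this particular sequence, and nothing in your argument does so --- the Lucas-type computation only gives the first congruence mod $p$, not the mod $p^{s+1}$ refinement. The paper supplies exactly this missing input geometrically: by Theorem \ref{prop:fgl_formula} (via Stienstra's theorem \cite{Stienstra:FGLAV}), the terminating polynomials $G'_{\mu,s}(t)=(-t)^{\mu p^s-1}G_{\mu,s}(t^{-\alpha})$ are the coefficients of $\tau^{\mu p^s}/\mu p^s$ in the logarithm of the Artin--Mazur formal group of $\widetilde{X_\Lambda}$ over the $p$-adically complete flat $W(\bF_q)$-algebra $\sS$ (with $\Lambda=t$, $x=t^{-\alpha}$), and it is this formal-group origin that makes \cite[(A.8), (i)$\Rightarrow$(v)]{Stienstra-Beukers:PFEFBGCEKS} applicable and yields $G'_{\mu,s+1}\equiv g'\cdot\sigma(G'_{\mu,s})\pmod{p^{s+1}}$; the element $f$ is then $t^{-(p-1)}g'(t)\in W(\bF_q)[[x]]\cap\sS=\sR$. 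So either make that identification explicit (in which case your proof becomes essentially the paper's, transported from the variable $t$ back to $x$ and from the terminating polynomials $G_{\mu,s}$ to the truncations $\sF_{1,s}$), or else prove the Dwork congruences for the multinomial sequence $a_k$ directly in the style of \cite{Dwork:PC} --- a nontrivial self-contained argument that your proposal does not contain.
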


\begin{proof}
    Define a polynomial $G_{\mu,s}(x)$, for positive integers $\mu$ and $s$, by
    \[
        G_{\mu,s}(x) = \ghf{\alpha}{\alpha-1}{\frac{-\mu p^s+1}{\alpha}, \frac{-\mu p^s+2}{\alpha}, \dots, \frac{-\mu p^s+\alpha}{\alpha}}{\frac{1}{\alpha_1},\dots,\frac{\alpha_1-1}{\alpha_1},1,\frac{1}{\alpha_2},\dots,\frac{\alpha_2-1}{\alpha_2},1,\dots,\frac{1}{\alpha_{n+1}},\dots,\frac{\alpha_{n+1}-1}{\alpha_{n+1}}}{\widetilde{C}x}.
    \]
    Let $G'_{\mu,s}(t)$ be the polynomial defined by
    \[
        G'_{\mu,s}(t) = (-t)^{\mu p^s-1} G_{\mu,s}(t^{-\alpha}).
    \]
    This is the coefficient of $\tau^{\mu p^s}/\mu p^s$ in the logarithm $l(\tau)$ in Theorem \ref{prop:fgl_formula},
    applied with $R$ being the $p$-adic completion $\sS$ of the ring
    \[
        W\big(\bF_q\big)\left[t, \big(t\sF_{1,1}(t^{-\alpha})\big)^{-1}\right]
    \]
    and with $\Lambda=t$.
    Now, since we have $G'_{1,1}(t) \equiv t^{p-1}\sF_{1,1}(t^{-\alpha}) \pmod{p}$ by a straightforward observation,
    a general fact on formal group \cite[(A.8), (i)$\Longrightarrow$(v)]{Stienstra-Beukers:PFEFBGCEKS}
    shows that there exists an element $g'$ of $\sS$ independent of $\mu$ and $s$ that satisfies
    \[
        G'_{\mu,s+1}(t)\equiv g'(t)\cdot\sigma\big(G'_{\mu,s}(t)\big)\pmod{p^{s+1}}\qquad (\mu, s\geq 1).
    \]
    Thus, we have
    \[
        t^{p-1}G_{\mu,s+1}(t^{-\alpha})\equiv g'(t) \sigma\big(G_{\mu,s}(t^{-\alpha})\big) \pmod{p^{s+1}}.
    \]
    Since the polynomial $G_{\mu, s}(x)$ converges $p$-adically to $\sF(x)$ as $s\to\infty$,
    the power series $f(x)$ equals $t^{-(p-1)}g'(t)$, with $x=t^{-\alpha}$.
    This element, therefore, lies in the intersection of $W\big(\bF_q\big)[[x]]$ and the ring $\sS$,
    which equals $\sR$.
\end{proof}

\begin{theorem}
    Let $\lambda$ be an element of $\bF_q^{\times}$ such that $X_{\lambda}$ is smooth.
    Then, the first slope of the Newton polygon of $H^{n-1}_{\cris}\big(X_{\lambda}/W(\bF_q)\big)$ is zero if and only if
    $\sF_{1,1}(\lambda^{-\alpha})\neq 0$ in $\bF_q$.
    In this case, the unit root of $X_{\lambda}$ equals
    \begin{equation}
        \prod_{i=0}^{r-1}\sigma^i\big(f(\widetilde{\lambda}^{-\alpha})\big),
        \label{eq:unitroot}
    \end{equation}
    where $q=p^r$.
\label{thm:unitroot}
\end{theorem}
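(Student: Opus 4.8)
The plan is to connect the two objects appearing in the statement—the Newton polygon slope of the crystalline cohomology and the unit root—to the formal group law $H^{n-1}(X_\lambda, \widehat{\Gm})$ via the height/slope correspondence already recalled in Subsection 2.2. First I would lift the situation to characteristic zero: apply Theorem \ref{prop:fgl_formula} with $R=W(\bF_q)$, with $\widetilde{c_i}$ the Teichm\"uller lifts of the $c_i$, and with $\Lambda=\widetilde{\lambda}$ a lift of $\lambda$, so that $\widetilde{X_{\widetilde\lambda}}$ reduces to $X_\lambda$. Then the logarithm of the formal group law is the series exhibited there, and its reduction modulo $p$ is the formal group law $H^{n-1}(X_\lambda,\widehat{\Gm})\bmod p$ whose height governs the Newton polygon. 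The coefficient of $\tau^{p}/p$ in the logarithm is, by the dictionary in Theorem \ref{prop:fgl_formula} (case $m=p$, so $a_1$ in the language of Lemma \ref{lem:height1}), governed by $\sF_{1,1}(\widetilde{\lambda}^{-\alpha})$; more precisely, the $m=1$ truncation $G'_{1,1}(t)\equiv t^{p-1}\sF_{1,1}(t^{-\alpha})\pmod p$ identity from the proof of Lemma \ref{lem:fisseries} shows that the relevant coefficient $a_1$ is $\equiv \sF_{1,1}(\lambda^{-\alpha})\pmod p$ up to a unit.

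For the first equivalence, I would invoke Lemma \ref{lem:height1}: the formal group law $\overline{G}=H^{n-1}(X_\lambda,\widehat\Gm)\bmod p$ has height one precisely when $a_1\not\equiv 0\pmod p$, which by the identification above is exactly the condition $\sF_{1,1}(\lambda^{-\alpha})\neq 0$ in $\bF_q$. Since the first slope of the Newton polygon of $H^{n-1}_{\cris}$ is zero if and only if $h=1$ (as recorded in Subsection 2.2, using that the slope is $(h-1)/h$ and that the height equals the dimension of the slope-$<1$ part), the first equivalence follows. A point requiring care here is that these structural facts require $X_\lambda$ to be a smooth complete intersection of dimension $n-1\geq 2$ of degree $\geq 2$, which is guaranteed by the smoothness hypothesis together with $n\geq 2$.

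For the formula \eqref{eq:unitroot}, the plan is to apply the Stienstra--Beukers machinery \cite{Stienstra-Beukers:PFEFBGCEKS} in the unit-root case. When $h=1$, the unit root is the eigenvalue of Frobenius on $H^{n-1}_{\cris}(X_\lambda/W(\bF_q))_K$ that is a $p$-adic unit, and it is computed through the Frobenius action on the Cartier--Dieudonn\'e module, equivalently through the multiplicative relation satisfied by the logarithm. Lemma \ref{lem:fisseries} produced the series $f(x)=\sF(x)/\sigma(\sF(x))\in\sR$; specializing $x=\widetilde{\lambda}^{-\alpha}$ gives a well-defined $p$-adic number precisely because $\sF_{1,1}(\lambda^{-\alpha})\neq 0$ ensures $\widetilde{\lambda}^{-\alpha}$ lies in the domain where $\sR$ is regular. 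The unit root over $\bF_q=\bF_{p^r}$ is then obtained as the norm from $W(\bF_q)$ down to $\bZ_p$ of this local contribution, i.e. the product $\prod_{i=0}^{r-1}\sigma^i\big(f(\widetilde\lambda^{-\alpha})\big)$, reflecting that the $q$-power Frobenius is the $r$-fold composite of $\sigma$; the general formalism guarantees this product is independent of the lift $\widetilde\lambda$.

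The main obstacle I expect is justifying that the slope-one Frobenius eigenvalue is exactly this norm expression rather than something off by a unit: one must track carefully how the Stienstra--Beukers congruence $G'_{\mu,s+1}(t)\equiv g'(t)\sigma(G'_{\mu,s}(t))$ translates, under specialization at $t=\widetilde\lambda^{-1/\alpha}$ (i.e. $x=\widetilde\lambda^{-\alpha}$), into the eigenvalue relation $F\cdot v=(\text{unit})\cdot v$ for the Frobenius $F$ on the Cartier--Dieudonn\'e module, and then verify that iterating $\sigma$ exactly $r$ times recovers the $q$-Frobenius whose eigenvalue is the sought unit root. This is where Yu's argument \cite{Yu:VURDFCYV} for the Dwork family is generalized, and the bookkeeping of the $\sigma$-semilinearity and the passage from the one-dimensional formal group to the crystalline eigenvalue is the delicate part; the rest is the essentially formal translation described above.
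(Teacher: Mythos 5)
Your overall route is the paper's: lift via Theorem \ref{prop:fgl_formula}, detect height one through Lemma \ref{lem:height1}, and obtain the unit root from the Stienstra--Beukers congruences as in Yu. There is, however, one genuine gap in the first half as you have written it. Lemma \ref{lem:height1} applies only to a formal group law whose logarithm has the special shape $\sum_{s\geq 0}a_s\tau^{p^s}/p^s$, whereas the logarithm produced by Theorem \ref{prop:fgl_formula} has a coefficient in \emph{every} degree $\tau^{m+1}/(m+1)$; so there is no ``$a_1$ of Lemma \ref{lem:height1}'' to point at, and reading off the coefficient of $\tau^p/p$ (which, incidentally, is the term $m=p-1$, not $m=p$) does not by itself determine the height --- the other coefficients matter a priori. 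The missing step is exactly the one the paper supplies: set $a\defeq\widetilde{\lambda}^{p-1}f(\widetilde{\lambda}^{-\alpha})$ and $a_s\defeq a\sigma(a)\cdots\sigma^s(a)$, and invoke \cite[(A.9), (iii)$\Longrightarrow$(i)]{Stienstra-Beukers:PFEFBGCEKS} --- whose hypothesis (iii) is verified by the congruence $G'_{\mu,s+1}(t)\equiv g'(t)\,\sigma\big(G'_{\mu,s}(t)\big)\pmod{p^{s+1}}$ established in the proof of Lemma \ref{lem:fisseries} --- to conclude that the formal group law with logarithm $\sum_s a_s\tau^{p^s}/p^s$ is \emph{strictly isomorphic} to the Artin--Mazur one. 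Only after this replacement does Lemma \ref{lem:height1} apply, and then $a\equiv\sF_{1,1}(\lambda^{-\alpha})\pmod{p}$ gives the stated criterion. You gesture at the right congruence, but without the strict-isomorphism step the appeal to Lemma \ref{lem:height1} does not go through.

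For the second half your plan matches the paper's (which simply defers to \cite[p.76, proof of Theorem 4.3 (2)]{Yu:VURDFCYV}): the unit root is the eigenvalue of the $q$-power Frobenius, i.e.\ the $r$-fold $\sigma$-twisted product of the local Frobenius factor. Two small points worth making explicit there: the auxiliary factor $\widetilde{\lambda}^{p-1}$ appearing in $a$ drops out of the product because $\prod_{i=0}^{r-1}\sigma^i\big(\widetilde{\lambda}^{p-1}\big)=\widetilde{\lambda}^{q-1}=1$ for the Teichm\"uller lift, which is why the theorem's formula involves $f(\widetilde{\lambda}^{-\alpha})$ rather than $a$; and the well-definedness of $f(\widetilde{\lambda}^{-\alpha})$ under the hypothesis $\sF_{1,1}(\lambda^{-\alpha})\neq 0$ is exactly what the localization at $x\sF_{1,1}(x)$ in the definition of $\sR$ was set up for, as you correctly note.
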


\begin{proof}
    Put $a \defeq \widetilde{\lambda}^{p-1}f(\widetilde{\lambda}^{-\alpha})$ where $f$ is the element in Lemma \ref{lem:fisseries}
    and put, for each $s\geq 0$, $a_s \defeq a\sigma(a)\dots\sigma^s(a)$.
    If we denote by $G'$ the formal group law over $W\big(\bF_q\big)$ with logarithm
        $l'(\tau) = \sum_{s=0}^{\infty}a_s\tau^{p^s}/p^s$,
        then, $G'$ is strictly isomorphic to the formal group law realizing $H^{n-1}\big(\widetilde{X_{\lambda}}, \widehat{\Gm}\big)$ by \cite[(A.9), (iii)$\Longrightarrow$(i)]{Stienstra-Beukers:PFEFBGCEKS}
    because $a$ satisfies the condition (iii) there.
    The first half of the theorem follows from Lemma \ref{lem:height1} because $a\equiv\sF_{1,1}(\lambda^{-\alpha}) \pmod{p}$.
    The proof of the second half goes exactly as in \cite[p.76, proof of Theorem 4.3 (2)]{Yu:VURDFCYV}.
\end{proof}



\section{Factorization of zeta function.}
\label{section:factorization_rat}

\subsection{Preparation for stating the main theorem.}
\label{ss:precise_statement}

    In this subsection, we introduce assumptions and notations which we need to state the main theorem of this article, that is,
    the precise description of the zeta function of $X_{\lambda}$ using hypergeometric functions over finite fields.

    Let $N$ be a positive integer that divides all $\alpha_i$'s and $\alpha$, and let 
    \[
        f_N\colon \frac{\big(\bZ/N\bZ\big)^{n+1}}{{}^{\rt}(\alpha_1,\dots,\alpha_{n+1})}\to \big(\bZ/N\bZ\big)^{n+1}
    \]
    denote the morphism defined by the endomorphism of $\big(\bZ/N\bZ)^{n+1}$ defined by the matrix $\widetilde{A} \bmod{N}$. 

    Let $d_1,\dots,d_n$ be non-zero elementary divisors of $\widetilde{A}$, and put $d\defeq d_1\dots d_n$.
    Then, $\Ker\big(f_N\big)$ consists of $d$ elements by the assumption on $N$
    because $\Im\big(f_N\big)\cong\oplus_{i=1}^n d_i\bZ/N\bZ$.
    In the rest of this section, we fix $d$ vectors $s_0=0, s_1,\dots,s_{d-1}\in\bN^{n+1}$ that represent $\Ker\big(f_{q-1}\big)$ so that all entries of these vectors are in $\{0,1,\dots,q-2\}$.
    Let us write $s_j = {}^{\rt}(s_{1j},\dots,s_{n+1,j})$.

    \begin{assumption}
        For all $i$ and $j$, each $s_{ij}$ is divisible by $\alpha_i$ and $|s_j|$ is divisible by $\alpha$.
        \label{asm1}
    \end{assumption}

    Under this assumption, we put $t_{ij}\defeq s_{ij}/\alpha_i$ and $t_j\defeq |s_j|/\alpha$.

    Note that Assumption \ref{asm1} is always satisfied if we replace $\bF_q$ by an extension $\bF_{q^r}$;
    in fact, replacing $q$ by $q^r$ invokes the multiplication by $(q^r-1)/(q-1)$,
    which is an isomorphism $\Ker\big(f_{q-1}\big)\to\Ker\big(f_{q^r-1}\big)$
    (it is isomorphism because it is injective and the order of the target equals that of the source).
    Since $|s_j|<n(q-1)<\alpha(q-1)$, we have $t_j\in\{0,1,\dots,q-2\}$, and $t_j=0$ if and only if $j=0$.

\begin{assumption}
    Let $J=\{j_1,\dots,j_t\}$ be an arbitrary subset $\{1,2,\dots,n+1\}$ with $t\geq (n+1)/2$ elements,
    and denote $i_1,\dots,i_s$ all indices $i=1,\dots,n+1$ such that $a_{ij}=0$ for all $j\not\in J$.
    Then, we assume that all elementary divisors of
    \[
        \begin{pmatrix}
            a_{j_1,i_1} & \dots & a_{j_1,i_s} \\
            & \vdots & \\
            a_{j_t,i_1} & \dots & a_{j_t,i_s}\\
            1 & \dots & 1
        \end{pmatrix}
        \in M_{t+1,s}\big(\bZ\big)
    \]
    divide $q-1$. 
    \label{asm2}
\end{assumption}

Here, note that this matrix is of rank $s$ and therefore all elementary divisors are non-zero.
In fact, since $\{i_1,\dots,i_s\}$ is a subset of $J$, the matrix above contains
\[
    \begin{pmatrix}
        a_{i_1,i_1} & \dots & a_{i_1,i_s} \\
        & \vdots & \\
        a_{i_s,i_1} & \dots & a_{i_s,i_s} \\
        1 & \dots & 1
    \end{pmatrix}
\]
as a minor matrix. Then, for each element of the kernel of the homomorphism defined by this matrix,
all the coefficients have the same sign as proved in Proposition \ref{prop:alpha},
which forces the element to be zero since all the entries of the lowest row are $1$.

\begin{example}
    (i) Consider the Dwork family $F_0(T)=T_1^{n+1}+\dots+T_{n+1}^{n+1}$;
    recall that $\alpha={}^\rt(1, 1, \dots, 1)$.
    Then, Assumptions \ref{asm1} and \ref{asm2} are equivalent to $q \equiv 1 \pmod{n+1}$.
    In fact, first, the elementary divisors of $A'$ is $1, n+1, \dots, n+1, 0$.
    Since $\Ker\big(f_{q-1}\big)$ is generated by $n$ vectors
    ${}^\rt\big((q-1)/(n+1), n(q-1)/(n+1), 0,\dots, 0\big),~{}^\rt\big(0, (q-1)/(n+1), n(q-1)/(n+1), 0,\dots,0\big),\dots,{}^\rt\big(0, \dots,0, (q-1)/(n+1), n(q-1)/(n+1)\big)$ modulo $q-1$,
    there are no extra conditions concerning this vector.
    Moreover, all elementary divisors of the matrices in Assumption \ref{asm2} above divide $n+1$.

    (ii) Consider the example from Example \ref{ex:section1_example} (ii), that is,
    $F_0(T)=T_1^4+T_1T_2^3+T_3^4+T_4^4$;
    recall that $\alpha={}^\rt(2, 4, 3, 3)$.
    Then, Assumption \ref{asm1} is equivalent to $q\equiv 1\pmod{12}$.
    Assumption \ref{asm2} moreover forces us to assume that $q\equiv 1\pmod{24}$.
    In fact, first, the elementary divisors of $A'$ is $1, 1, 4, 0$.
    Since $\Ker\big(f_{q-1}\big)$ is generated by the vector ${}^\rt\big( (q-1)/4, 0, 3(q-1)/4, 0\big)$,
    we have to impose the condition ``modulo 24'', not just ``modulo 12''.
    The $J$'s to be taken care of is $\{2, 3, 4\}, \{1, 3, 4\}, \{1, 2, 3\}, \{1, 2\},
    \{2, 3\}, \{1, 3\}, \{3, 4\}$ and these sets with $3$ and $4$ reversed.
    In fact, all elementary divisors to be considered divide 4;
    For example, the matrices in Assumption \ref{asm2} corresponding to $J=\{2,3,4\}, \{1,3,4\}, \{3,4\}$ are
    \[
        \begin{pmatrix} 0 & 0 \\ 4 & 0 \\ 0 & 4 \\ 1 & 1 \end{pmatrix},\quad
        \begin{pmatrix} 4 & 0 & 0 \\ 0 & 4 & 0 \\ 0 & 0 & 4 \\ 1 & 1 & 1\end{pmatrix},\quad
        \begin{pmatrix} 4 & 0 \\ 0 & 4 \\ 1 & 1 \end{pmatrix}
    \]
    respectively.
\end{example}

\begin{notation}
    (i) In the rest of this article, we fix a generator $\rho$ of the character group $\widehat{\bF_q^{\times}}$.

    (ii) For each natural number $\beta$ dividing $q-1$, we put $\varphi_{\beta}\defeq \rho^{(q-1)/\beta}$.
    \label{notation}
\end{notation}

\begin{notation}
    (i) For each $j=0,\dots,d-1$, we define the function $r\mapsto F(j)_r$ on $\bZ_{\geq 1}$ as follows.
    If $j=0$, then $F(0)_r$ equals
    \[
        \trghf{[\varphi_{\alpha}]}{[\varphi_{\alpha_1}],\dots,[\varphi_{\alpha_{n+1}}]}{C\lambda^{-\alpha}}_{\bF_{q^r}},
    \]
    where $[\varphi_{\beta}]$ denotes the sequence $\varepsilon, \varphi_{\beta}^1,\dots,\varphi_{\beta}^{\beta-1}$.  If $j>0$, then $F(j)_r$ equals
    \[
        q^{\delta_{|s_j|}-1} \trghf{\rho^{t_j}[\varphi_{\alpha}]}
        {\rho^{t_{1j}}[\varphi_{\alpha_1}], \dots, \rho^{t_{n+1,j}}[\varphi_{\alpha_{n+1}}]}{C\lambda^{-\alpha}}_{\bF_{q^r}},
    \]
    where $\psi[\varphi_{\beta}]$ denotes the sequence $\psi, \psi\varphi_{\beta}^1,\dots,\psi\varphi_{\beta}^{\beta-1}$, and where
    \[
        \delta_{|s_j|} \defeq \begin{cases} 1 & \text{if } |s_j|\equiv 0\pmod{q-1},\\
            0 & \text{if } |s_j|\not\equiv 0\pmod{q-1}.\end{cases}
    \]

    (ii) For each $j=0,\dots,d-1$, and $r\geq 1$, we put
    \begin{align*}
        \gamma(j)_r\defeq & \prod_{i=1}^{n+1}\rho_r^{s_{ij}}\big(\alpha_{i,r}^{-1}c_i\big)\cdot\rho_r^{s_j}\big((-\lambda)^{-1}\alpha\big) \\
        & \qquad \times \prod_{i=1}^{n+1}\left\{G\big(\rho_r^{-t_{ij}}\big)\prod_{b_i=1}^{\alpha_i-1}\frac{G\big(\rho_r^{-t_{ij}}\varphi_{\alpha_i,r}^{b_i}\big)}{G\big(\varphi_{\alpha_{i,r}}^{b_i}\big)}\right\}
        G\big(\rho_r^{t_j}\big)\prod_{b=1}^{\alpha-1}\frac{G\big(\rho_r^{t_j}\varphi_{\alpha,r}^b\big)}{G\big(\varphi_{\alpha,r}^b\big)}.
    \end{align*}
    For a positive integer $r$, the element $\gamma(j)_r$ is similarly defined
    by replacing $\rho$ by $\rho_{r}$ and $\varphi_\beta$'s by $\varphi_{\beta,r}$'s for $\beta\in\{\alpha_1,\dots,\alpha_{n+1},\alpha\}$.

    (iii) For $r\geq 1$, we put 
    \[
        u_r \defeq \sum_{\substack{J\subset\{1,\dots,n+1\}\\ \# J\geq (n+1)/2}}\sum_{i=0}^{\# J-s}(-1)^{\# J-s-i}q^{r(i-1)}\sum\prod_{j=1}^s G\big(\chi_{j,r}^{-1}\big)\chi_{j,r}(c_j);
    \]
    here, in the most inner sum, $(\chi_1,\dots,\chi_s)$ runs through $\Ker\big(\varphi(\widetilde{A})\big)$
    so that exactly $n-2i+1$ components are non-trivial.
\end{notation}

\begin{lemma}
    \label{lem:others_are_pure}
    \textup{(i)} The functions $r\mapsto \gamma(j)_r$ for $j=0,\dots,d-1$ are pure $q$-Weil numbers.
    If $j=0$, then $\gamma(0)_r=1$ for all $r>0$.
    If $j\neq 0$, then the weight of $\gamma_r(j)$ is $\#\Set{i\in\{1,2,\dots,n+1\} | s_{ij}\neq 0}+ 1 - \delta_{|s_j|}$.

    \textup{(ii)} The functions $r\mapsto \gamma(j)_rF(j)_r$ for $j=0,\dots,d-1$ is a pure $q$-Weil function of weight $n-1$.
    
    \textup{(iii)} $r\mapsto u_r$ is a $q$-Weil functions of weight $n-1$.
\end{lemma}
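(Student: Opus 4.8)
The three assertions are of the same nature: each quantity is assembled from Gauss sums over $\bF_{q^r}$, character values, and powers of $q^r$, so the strategy throughout is to rewrite everything, as a function of $r$, as a product of $r$-th powers of fixed algebraic numbers by means of the Hasse--Davenport relation (Proposition \ref{prop:gauss_sum_is_pure}), and then to read off weights by counting trivial versus non-trivial characters. Recall the basic bookkeeping: a non-trivial character's Gauss sum contributes weight $1$, a trivial one weight $0$, every character value is a root of unity and so contributes weight $0$, and each factor $q^r$ contributes weight $2$.

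For (i), I would first note that all the character factors $\rho_r^{s_{ij}}(\alpha_{i,r}^{-1}c_i)$ and $\rho_r^{s_j}((-\lambda)^{-1}\alpha)$ are values of norm-composed characters on base-field elements, hence equal $\zeta^r$ for a single fixed root of unity $\zeta$; thus they jointly form one weight-$0$ factor. Applying Proposition \ref{prop:gauss_sum_is_pure} to each Gauss sum then turns $\gamma(j)_r$ into a single $q$-Weil number raised to the $r$-th power, so $r\mapsto\gamma(j)_r$ is pure, and its weight is the number of non-trivial characters in the numerator minus that in the denominator. The denominator characters $\varphi_{\alpha_i}^{b_i}$ $(1\le b_i\le\alpha_i-1)$ and $\varphi_\alpha^{b}$ $(1\le b\le\alpha-1)$ are all non-trivial. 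For the $i$-th numerator block, the bound $0\le t_{ij}<(q-1)/\alpha_i$ forced by $0\le s_{ij}\le q-2$ shows that $\rho^{-t_{ij}}\varphi_{\alpha_i}^{b_i}$ is non-trivial for every $b_i$, while $\rho^{-t_{ij}}$ is non-trivial exactly when $s_{ij}\neq 0$; hence this block contributes $1$ to the weight precisely when $s_{ij}\neq 0$. An identical analysis of the $\alpha$-block, using that $|s_j|$ is a non-zero multiple of $q-1$ iff exactly one of its numerator characters is trivial, contributes $1-\delta_{|s_j|}$. Summing yields the stated weight $\#\Set{i | s_{ij}\neq 0}+1-\delta_{|s_j|}$; for $j=0$ every Gauss-sum ratio degenerates to $1$ and every character value to $1$, leaving $\gamma(0)_r=1$.

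For (ii), the plan is to recognise $\gamma(j)_r F(j)_r$ as a normalisation of the Frobenius trace of Katz's hypergeometric sheaf attached to the \emph{reduced} parameters of $F(j)_r$. Concretely, $\gamma(j)_r$ is designed to supply exactly the product of Gauss sums $\prod G(A_i)G(B_i)$ appearing in the trace formula recalled after Remark \ref{rem:katzandmccarthy} (together with the $q^{\delta_{|s_j|}-1}$ shift), so that $\gamma(j)_r F(j)_r$ becomes the trace of a sheaf that is pure by Proposition \ref{prop:ghf_is_pure}; Proposition \ref{prop:cancellation} is used to pass between $F(j)_r$ and its unreduced form while controlling the discarded lower-weight terms. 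Purity of the reduced hypergeometric function gives weight $m'-1$, where $m'$ is the number of trivial reduced parameters, and combining this with the weight of $\gamma(j)_r$ from (i) and the contribution $2(\delta_{|s_j|}-1)$ of the $q$-power reduces the claim to the combinatorial identity $\#\Set{i | s_{ij}\neq 0}+\delta_{|s_j|}+m'=n+1$. \textbf{This identity is the crux:} it amounts to checking that the number of parameter coincidences in $F(j)_r$ is governed precisely by the kernel data $s_j$, and verifying it is the step I expect to require the most care. (As a sanity check, for $j=0$ one has $m'=n$ and $\delta_{|s_0|}=1$, and the identity holds.)

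For (iii), no purity is claimed, only a uniform weight, so I would argue term by term. Fixing $J$ and $i$, each summand is $q^{r(i-1)}$ times a product $\prod_{j=1}^s G(\chi_{j,r}^{-1})\chi_{j,r}(c_j)$ in which exactly $n-2i+1$ of the characters $\chi_j$ are non-trivial. By Proposition \ref{prop:gauss_sum_is_pure} the Gauss-sum product is, up to sign, a weight-$(n-2i+1)$ $q$-Weil number to the $r$-th power, the character values are weight $0$, and $q^{r(i-1)}$ has weight $2(i-1)$; hence the summand has weight $2(i-1)+(n-2i+1)=n-1$, independently of $J$ and $i$. Since $u_r$ is a finite signed sum of such terms, it is a (mixed) $q$-Weil function all of whose constituents have weight $n-1$, which is the assertion.
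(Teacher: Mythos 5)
Your proposal is correct and takes essentially the same route as the paper's proof: parts (i) and (iii) read off weights by counting trivial versus non-trivial characters via Proposition \ref{prop:gauss_sum_is_pure}, and part (ii) combines the weight of $\gamma(j)_r$ with Proposition \ref{prop:ghf_is_pure} applied to the reduced parameters together with the $q^{\delta_{|s_j|}-1}$ shift. The combinatorial identity you single out as the crux of (ii) is exactly the content of the paper's own (unjustified) assertion that the reduced parameter list of $F(j)_r$ contains $\#\{i : s_{ij}=0\}-\delta_{|s_j|}$ trivial characters, so you have isolated precisely the step that the paper also leaves implicit.
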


\begin{proof}
    (i) By the definition of $t_{ij}$'s, $\rho^{-t_{ij}}\varphi_{\alpha_i}^{b_i} ~(b_i=1,\dots,\alpha_i-1)$ cannot be
    the trivial character unless $t_{ij}\neq 0$.
    If no $\rho^{t_j}\varphi_{\alpha}^b$'s are trivial, that is, if $|s_j|\not\equiv 0\pmod{q-1}$, then
    the weight is $\#\Set{i\in\{1,2,\dots,n+1\} | s_{ij}\neq 0} + 1$ by Proposition \ref{prop:gauss_sum_is_pure}.
    If $|s_j|\equiv 0\pmod{q-1}$, then exactly one $G\big(\rho^{t_j}\varphi_{\alpha}^b\big)$ is of weight $0$,
    and the claim follows.

    (ii) If $j=0$, then the function $r\mapsto F(0)_r$ gives a pure $q$-Weil function of weight $n-1$ by Proposition \ref{prop:ghf_is_pure} and $\gamma(0)_r=1$, which shows the claim.
    Assume that $j\neq 0$. Then, the function $F(j)_r$ gives a pure $q$-Weil function
    of weight $\#\Set{i\in\{1,\dots,n+1\} | s_{ij}=0}-1-\delta_{|s_j|}+2(-1+\delta_{|s_j|})$ by Proposition \ref{prop:ghf_is_pure}.
    Therefore we see that the function $r\mapsto\gamma(j)_rF(j)_r$ is, together with (i), a pure $q$-Weil function of weight $n-1$.
    
    (iii) is a direct consequence of Proposition \ref{prop:gauss_sum_is_pure}.
\end{proof}

\subsection{Statement of the main theorem and the strategy of the proof.}

Now, we may state the main theorem.

\begin{theorem}
    Under Assumptions \ref{asm1} and \ref{asm2}, let $\lambda$ an element of $\bF_q^{\times}$ such that $X_{\lambda}$ is smooth and
    $C\neq \lambda^{\alpha}$.
    Define the polynomial $P(T)\in\bZ[T]$ by
    \[
        \zeta\big(X_{\lambda}, T\big) = \frac{P(T)^{(-1)^n}}{(1-T)(1-qT)\dots (1-q^{n-1}T)}.
    \]
    Then, $P(T)$ equals
    \[
        \zeta\big(u_r\big)(T)\big(1-q^{(n-1)/2}T\big)^D \prod_{i=0}^{d-1}\zeta\big(\gamma_r(j)F(j)_r\big)(T),
    \]
    where the number $D$ is defined to be the number of subsets $J\subset\{1,2,\dots,n+1\}$
    such that $\# J=(n+1)/2$ and that for all $i=1,\dots,n+1$ there exists $j\not\in I$ such that $a_{ij}\geq 1$
    \textup{(}in particular, it is zero if $n$ is even\textup{)}.
    \label{thm:factorization_zeta}
\end{theorem}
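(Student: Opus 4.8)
The plan is to deduce the factorization from a single pointwise identity of $q$-Weil functions, using purity to throw away everything of the wrong weight. Write $N_r\defeq \#X_\lambda(\bF_{q^r})$. Since $X_\lambda$ is a smooth hypersurface of dimension $n-1$, comparing $\zeta(X_\lambda,T)=\exp\big(\sum_{r\geq 1}N_r T^r/r\big)$ with the displayed denominator shows, in the notation of Definition \ref{def:zeta}, that $P(T)=\zeta(g)(T)$ for the function
\[
    g(r)\defeq (-1)^{n-1}\Big(N_r-\sum_{i=0}^{n-1}q^{ri}\Big),
\]
which is the trace of the $r$-th power of the Frobenius on the primitive part of $H^{n-1}$. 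Because $\zeta(f+f')=\zeta(f)\zeta(f')$, because $\zeta$ is injective ($f$ is recovered from the logarithmic derivative of $\zeta(f)$), and because $(1-q^{(n-1)/2}T)^D=\zeta\big(r\mapsto D q^{r(n-1)/2}\big)(T)$, the asserted equality of $P(T)$ is equivalent to the pointwise identity
\[
    g(r)=u_r+D q^{r(n-1)/2}+\sum_{j=0}^{d-1}\gamma(j)_r F(j)_r\qquad (r\geq 1),
\]
whose right-hand side I abbreviate by $W(r)$.

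Next I would invoke purity to reduce this to a statement about the top weight alone. As $X_\lambda$ is smooth and projective, Deligne's theorem makes the primitive middle cohomology pure of weight $n-1$, so $g$ is a pure $q$-Weil function of weight $n-1$. On the other hand, Lemma \ref{lem:others_are_pure} shows that each constituent of $W$ — namely $u_r$, the multiple of $q^{r(n-1)/2}$, and each $\gamma(j)_r F(j)_r$ — is a $q$-Weil function of weight $n-1$; here the hypothesis $C\neq\lambda^{\alpha}$ is exactly what guarantees $C\lambda^{-\alpha}\neq 1$, so that Proposition \ref{prop:ghf_is_pure} applies inside Lemma \ref{lem:others_are_pure}. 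Consequently $g-W$ is a $q$-Weil function all of whose pure constituents have weight $n-1$; hence, by the corollary asserting the uniqueness of the decomposition of a $q$-Weil function into pure parts of distinct weights, it suffices to prove in addition that $g-W$ has weight $\leq n-2$, for the two conclusions together force $g-W=0$. This is the precise sense in which the weight theory lets us ignore factors of improper weight, and it is where the proof is reduced to the calculation of Subsection 3.3.

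The remaining, and hardest, step is to exhibit $g$ as a character sum and to recognize $W$ as exactly its weight-$(n-1)$ part. Concretely I would compute $N_r$ by expanding the indicator of $\{F_\lambda=0\}$ over $\bF_{q^r}$ with the additive character $\theta_r$ and resolving each monomial into multiplicative characters through Gauss sums, stratifying the projective point count by which coordinates vanish. On the open stratum $T_1\cdots T_{n+1}\neq 0$ the sum organizes, using the shape of $A$ from Propositions \ref{prop:form_of_A} and \ref{prop:alpha} and the divisibilities in Assumption \ref{asm1}, into $\sum_{j=0}^{d-1}\gamma(j)_r\cdot\tghf{n+1}{n+1}{\cdots}{\cdots}{C\lambda^{-\alpha}}_{\bF_{q^r}}$ with \emph{unreduced} parameters, while the boundary strata, governed by Assumption \ref{asm2}, contribute $u_r$ together with the middle term $D q^{r(n-1)/2}$ coming from the self-dual strata with $\#J=(n+1)/2$. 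Passing from the unreduced $\tghf{n+1}{n+1}{\cdots}{\cdots}{\cdots}{}$ to the reduced $F(j)_r$ by Proposition \ref{prop:cancellation} supplies the factors $q^{\delta_{|s_j|}-1}$ and introduces only a weight-$\leq n-2$ discrepancy, and Proposition \ref{prop:gauss_sum_is_pure} confirms that everything discarded along the way has weight $\leq n-2$. The main obstacle is precisely this bookkeeping: matching the Gauss-sum prefactors with $\gamma(j)_r$, checking that the boundary strata assemble into $u_r$ and $D q^{r(n-1)/2}$, and verifying that the genericity encoded in Assumptions \ref{asm1} and \ref{asm2} makes the relevant characters exist and drives all spurious terms down to weight $\leq n-2$ — the weight argument of the previous paragraph being exactly what spares us from resolving the low-weight cancellations explicitly.
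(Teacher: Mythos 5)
Your proposal is correct and follows essentially the same route as the paper: reduce the factorization to a pointwise identity for $\#X_\lambda(\bF_{q^r})$, use purity of the primitive middle cohomology together with Lemma \ref{lem:others_are_pure} and the uniqueness of the weight decomposition to work only modulo $q$-Weil functions of weight $\neq n-1$, and then carry out the stratified character-sum computation (Delsarte's formula, Davenport--Hasse, and Proposition \ref{prop:cancellation} for the passage to reduced parameters), exactly as in Propositions \ref{prop:count_points_via_function}, \ref{prop:count_zero} and \ref{prop:count_star}. The computational core of Subsection 3.3 is only sketched, but all the ingredients and their roles (including why $C\neq\lambda^{\alpha}$ is needed) are correctly identified.
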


\begin{remark}
    (i) Note that the parameters of the hypergeometric function $F(0)_r$ corresponds to
    the parameters of $\sF(x)$ in the $p$-adic situation (\ref{eq:padic_ghf}).

    (ii) Lemma \ref{lem:others_are_pure} (ii) shows that each $\zeta\big(\gamma(j)_rF(j)_r\big)(T)$ is a polynomial.
    Moreover, this is the Frobenius trace of an \emph{irreducible} $\overline{\bQ_{\ell}}$-sheaf on $\Gm$ at a point
    \cite[Theorem 8.4.2]{Katz:ESDE}, as we explained in Subsection 1.1.

    (iii) $\zeta(u_r)(T)$ is a rational function all of whose reciprocal zeros and poles
    belong to $\bQ(\mu_{q-1})$, where $\mu_{q-1}$ is the subgroup of $\overline{\bQ}^{\times}$ of order $q-1$.
\end{remark}

In order to prove this theorem, it clearly suffices to show the following proposition.

\begin{proposition}
    The number of $\bF_{q^r}$-rational points of $X_{\lambda}$ equals
    \[
        \sum_{i=0}^{n-1}(q^r)^i + u_r + D(q^r)^{(n-1)/2} + (-1)^n\sum_{j=0}^{d-1}\gamma(j)_rF(j)_r,
    \]
    where $F(j)_r$ is the function defined in the statement of the theorem. 
    \label{prop:count_points}
\end{proposition}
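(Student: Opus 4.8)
The plan is to count $X_\lambda(\bF_{q^r})$ by a character-sum computation of the number of projective solutions, and then match the resulting sum against the four advertised pieces by invoking the weight theory from Section~1 to discard anything of the wrong weight. I would work over $\bF_{q^r}$ throughout, but—since Assumption~\ref{asm1} holds after passing to any extension and all the functions in the statement are defined intrinsically as $q$-Weil functions in $r$—it suffices to prove the point-count identity for every $r$. The starting point is the standard device: for the affine cone, $\#\{F_\lambda=0\}$ in $\bA^{n+1}(\bF_{q^r})$ can be written using the additive character $\theta_r$ via
\[
    \#\{F_\lambda=0\} = q^{r\,n} + \frac{1}{q^r}\sum_{y\in\bF_{q^r}^\times}\ \sum_{T\in\bF_{q^r}^{n+1}} \theta_r\big(yF_\lambda(T)\big),
\]
and then expanding each monomial contribution by orthogonality of multiplicative characters. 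This converts the inner sums into products of Gauss sums $G(\chi_{i,r})$ indexed by tuples $(\chi_1,\dots,\chi_{n+1})$ of characters of $\bF_{q^r}^\times$, subject to the linear constraint coming from the exponent matrix $\widetilde A$; the relevant tuples are exactly those in $\Ker\big(\varphi(\widetilde A)\big)$, which is where the combinatorial data $s_0,\dots,s_{d-1}$ and the reduced-parameter hypergeometric functions enter.

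First I would separate the contribution of the deformation term $-\lambda T_1\cdots T_{n+1}$ from the ``diagonal'' terms $c_iT^{a_i}$, so that the sum organizes into a main part indexed by $\Ker(f_{q^r-1})$—which, via Notation, produces precisely the terms $\gamma(j)_rF(j)_r$—together with boundary contributions coming from tuples in which some coordinates $T_i$ vanish. For the main part, the key is to recognize, after collecting Gauss sums and applying the Hasse--Davenport relation and the reflection formula $G(\mu)G(\overline\mu)=q\mu(-1)$ (as already used in the proof of Proposition~\ref{prop:cancellation}), that each orbit representative $s_j$ yields exactly the product $\gamma(j)_r$ times a reduced hypergeometric function $F(j)_r$; the normalizing factor $\alpha_{i,r}^{-1}$, the sign $(-\lambda)^{-1}\alpha$, and the power $q^{\delta_{|s_j|}-1}$ all fall out of this bookkeeping, and the passage from the tilde-parameter function to the reduced one is controlled by Definition~\ref{def:reduced_parameters}. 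The boundary terms—where one restricts to coordinate subspaces $T_i=0$ for $i\notin J$—assemble into $u_r$, whose shape (an alternating sum over $J$ of powers of $q^r$ times Gauss-sum products over $\Ker(\varphi(\widetilde A))$) is exactly what the inclusion-exclusion over vanishing loci produces; Assumption~\ref{asm2} is what guarantees these restricted kernels are the right size so that the combinatorics closes up.

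The hardest part, I expect, is isolating the term $D(q^r)^{(n-1)/2}$ and simultaneously justifying that nothing else of weight $n-1$ has been lost or double-counted. Here is exactly where I would deploy the weight philosophy rather than brute force: by Lemma~\ref{lem:others_are_pure}, the functions $\gamma(j)_rF(j)_r$ are pure of weight $n-1$ and $u_r$ is mixed of weight $n-1$, so any genuinely ``extra'' contribution to the middle cohomology must itself be a $q$-Weil function of weight $n-1$; the middle term $D(q^r)^{(n-1)/2}$ is precisely the weight-$(n-1)$ piece arising from the self-paired subsets $J$ with $\#J=(n+1)/2$ (which can only occur when $n$ is odd, matching ``$D=0$ if $n$ is even''). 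The strategy is to carry out the character-sum expansion only up to terms of weight $n-1$, using Propositions~\ref{prop:ghf_is_pure} and~\ref{prop:cancellation} to absorb or cancel every lower-weight error term without computing it explicitly—this is the mixing of the two strategies advertised in the introduction, and it is what lets us avoid the messy residual Gauss-sum manipulations. Finally I would pass from the affine cone count to the projective count by dividing out the scaling $\bF_{q^r}^\times$-action, producing the clean $\sum_{i=0}^{n-1}(q^r)^i$ from the ambient $\bP^n$ and leaving the three nontrivial pieces $u_r$, $D(q^r)^{(n-1)/2}$, and $(-1)^n\sum_j\gamma(j)_rF(j)_r$ exactly as stated.
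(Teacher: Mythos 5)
Your route is essentially the paper's: stratify $X_{\lambda}(\bF_{q^r})$ by the set of vanishing coordinates, apply the Delsarte-type formula (Proposition \ref{prop:hypersurfaceofgm}) on each torus stratum, parametrize $\Ker(\varphi(\widetilde{A}))$ by the representatives $s_j$, convert the resulting Gauss-sum products into hypergeometric functions via the Hasse--Davenport relation (Lemma \ref{lem:multi_hyper}), reduce the common parameters via Proposition \ref{prop:cancellation}, and keep track only of weight $n-1$. The open torus producing $(-1)^n\sum_j\gamma(j)_rF(j)_r$, the boundary strata producing $u_r$ together with the $D(q^r)^{(n-1)/2}$ term from the strata with $\#J=(n+1)/2$ on which $F_{\lambda}$ restricts to zero --- all of this matches Propositions \ref{prop:count_zero} and \ref{prop:count_star} and Lemma \ref{lem:invertible_on_J}.

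There is, however, one genuine gap at the end. The computation you describe --- ``carry out the character-sum expansion only up to terms of weight $n-1$'' and ``discard anything of the wrong weight'' --- only establishes that $\# X_{\lambda}(\bF_{q^r})$ differs from the displayed expression by a $q$-Weil function of weight $\neq n-1$; this is exactly Proposition \ref{prop:count_points_via_function}, not the exact equality asserted in the statement. Discarding error terms does not make them vanish. The missing input is cohomological: because $X_{\lambda}$ is a \emph{smooth} projective hypersurface of dimension $n-1$, weak Lefschetz and Deligne's purity give that $\# X_{\lambda}(\bF_{q^r})-\sum_{i=0}^{n-1}(q^r)^i$ equals $(-1)^{n-1}$ times the trace of $F^r$ on the primitive part of the middle cohomology, which is \emph{pure} of weight $n-1$. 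Combining this with your congruence and with Lemma \ref{lem:others_are_pure} (which shows $u_r$, $D(q^r)^{(n-1)/2}$ and each $\gamma(j)_rF(j)_r$ are of weight $n-1$), the difference of the two sides is simultaneously a $q$-Weil function of weight $n-1$ and one of weight $\neq n-1$, hence zero by the uniqueness of the weight decomposition (the corollary following Definition \ref{def:zeta}). This deduction --- flagged explicitly in the paper just before Proposition \ref{prop:count_points_via_function} --- is what turns your ``identity up to junk'' into the claimed identity, and it needs to be stated; as written, your proposal proves the weaker statement only.
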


The key point of the proof of this proposition is to calculate the number of $\bF_{q^r}$-rational points
of $X_{\lambda}$ ``modulo $q$-Weil function of weight $\neq n-1$'';
in fact, we know from Weil conjecture that the ``weight $\neq n-1$ part'' of this sum equals $1+q+\dots+q^{n-1}$.
In the next subsection, we prove the following assertion in the next subsection,
from which Proposition \ref{prop:count_points} directly follows.

\begin{proposition}
    The function $r\mapsto X_{\lambda}\big(\bF_{q^r}\big)$ is the sum of the function
    \[ 
        r \mapsto \sum_{i=0}^{n-1}(q^r)^i + u_r + D (q^r)^{(n-1)/2} + \sum_{j=0}^{d-1}\gamma(j)_rF(j)_r
    \]
    and a $q$-Weil function of weight $\neq n-1$.
    \label{prop:count_points_via_function}
\end{proposition}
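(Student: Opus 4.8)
The plan is to count $\#X_\lambda(\bF_{q^r})$ directly by character sums, but to systematically discard all contributions that form $q$-Weil functions of weight $\neq n-1$, since by the Weil conjectures (and purity of the middle cohomology of the smooth hypersurface $X_\lambda$) the weight $\neq n-1$ part of the point count is forced to equal $1+q^r+\dots+(q^r)^{n-1}$ plus the ``extra'' weight $(n-1)$ and non-middle pieces encoded in $u_r$ and $D(q^r)^{(n-1)/2}$. Concretely, I would first use the standard additive-character trick: for the affine cone, $\#\{T\in\bA^{n+1}(\bF_{q^r}) : F_\lambda(T)=0\}$ is computed by writing the indicator of $F_\lambda=0$ as $\frac{1}{q^r}\sum_{y\in\bF_{q^r}}\theta_r(y F_\lambda(T))$, then summing over $T$ and projectivizing by dividing out the scaling action of $\bF_{q^r}^\times$.

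Next I would expand each factor $\theta_r(y c_i T^{a_i})$ and $\theta_r(-y\lambda T_1\cdots T_{n+1})$ and collect the sums over the torus coordinates $T_k\in\bF_{q^r}^\times$ using the orthogonality of multiplicative characters. The key mechanism is that summing $\sum_{T_k} \chi(T_k)$ forces a system of congruences on the exponents; solving this system is exactly governed by $\ker(f_{q-1})$ as introduced in Subsection 3.1, so the off-diagonal (torus) contribution reorganizes into a sum over the $d$ representatives $s_0=0,s_1,\dots,s_{d-1}$. Using Assumption \ref{asm1} to write $s_{ij}=\alpha_i t_{ij}$ and $|s_j|=\alpha t_j$, each such term assembles into a product of Gauss sums which, after extracting the normalizing prefactor, matches $\gamma(j)_r$ times a hypergeometric expression with reduced parameters $F(j)_r$; the factors $[\varphi_{\alpha_i}]$ and $[\varphi_\alpha]$ arise precisely because the congruences run over residues modulo $\alpha_i$ and $\alpha$. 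The boundary strata (where some $T_k=0$) produce the lower-dimensional diagonal-type terms; these I would argue contribute either the expected $\sum_i (q^r)^i$ or the error term $u_r$ together with $D(q^r)^{(n-1)/2}$, the combinatorics of which is controlled by Assumption \ref{asm2} (guaranteeing the relevant elementary divisors divide $q-1$ so the inner character sums over $\ker(\varphi(\widetilde{A}))$ are well-defined).

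The main obstacle, and the place where the ``mixed'' strategy pays off, is the handling of the boundary (non-torus) contributions and the passage from the full hypergeometric sum to the \emph{reduced} one. Rather than evaluating every messy character sum coming from coordinate hyperplanes, I would invoke Lemma \ref{lem:others_are_pure} together with Propositions \ref{prop:ghf_is_pure} and \ref{prop:cancellation}: these tell us exactly which assembled pieces are pure of weight $n-1$ (namely $\gamma(j)_r F(j)_r$ and the genuine part of $u_r$) and which have weight $\neq n-1$ and may therefore be absorbed into the error term. In particular, Proposition \ref{prop:cancellation} is what licenses replacing $\tghf{}{}{}{}{}$ by $\trghf{}{}{}{}$ at the cost of a weight $\leq m-2 < n-1$ remainder, and the unwanted low-weight pieces from collapsing strata get swept up the same way. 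I expect the bookkeeping of signs (the $\chi((-1)^{n+1})$ factors and $(-1)^n$ in the statement) and the careful tracking of the prefactors into $\gamma(j)_r$ to be the most error-prone calculation, but conceptually the proof reduces to (i) a formal character-sum expansion, (ii) identification of the weight $n-1$ pieces with $\sum_j \gamma(j)_r F(j)_r + u_r + D(q^r)^{(n-1)/2}$, and (iii) discarding everything else by the $q$-Weil weight arguments already established.
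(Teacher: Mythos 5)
Your overall skeleton is the same as the paper's: stratify $X_{\lambda}\big(\bF_{q^r}\big)$ by which coordinates vanish, apply the Delsarte-type formula of Proposition \ref{prop:hypersurfaceofgm} (which is exactly your additive-character-plus-orthogonality expansion) to the open torus stratum and to each boundary stratum, parametrize $\Ker\big(\varphi(\widetilde{A})\big)$ by the representatives $s_j$ together with a free parameter $a$ moving along ${}^{\rt}(\alpha_1,\dots,\alpha_{n+1})$, and discard everything of weight $\neq n-1$ via Propositions \ref{prop:ghf_is_pure} and \ref{prop:cancellation}. This matches the paper's Propositions \ref{prop:count_zero} and \ref{prop:count_star} and Lemma \ref{lem:invertible_on_J}.

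There is, however, one concrete missing step. After the kernel is parametrized, the torus contribution for a fixed $j$ is a sum over $a=0,\dots,q-2$ of products of the $n+2$ Gauss sums $G\big(\rho^{-s_{ij}-a\alpha_i}\big)$ and $G\big(\rho^{|s_j|+a\alpha}\big)$; this is \emph{not} yet a hypergeometric sum of the required type, whose summand must contain $\alpha+\sum_i\alpha_i$ Gauss sums grouped into the blocks $\rho^{t_j}[\varphi_{\alpha}]$ and $\rho^{t_{ij}}[\varphi_{\alpha_i}]$. You attribute the appearance of these blocks to ``the congruences running over residues modulo $\alpha_i$ and $\alpha$,'' but that is not the mechanism: the congruences only produce single Gauss sums of the form $G\big((\rho^{-t_{ij}-a})^{\alpha_i}\big)$. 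The conversion into blocks is the Hasse--Davenport product relation (Lemma \ref{lem:multi_hyper} in the paper), which rewrites $G\big(\chi^{\alpha_i}\big)$ as an explicit constant times $G\big(\chi\big)\prod_{b=1}^{\alpha_i-1}G\big(\chi\varphi_{\alpha_i}^{b}\big)/G\big(\varphi_{\alpha_i}^{b}\big)$; it is also the source of the constant $C=\alpha^{\alpha}\prod_i c_i^{\alpha_i}\alpha_i^{-\alpha_i}$ in the argument $C\lambda^{-\alpha}$ and of much of the prefactor $\gamma(j)_r$. Without this identity the identification of the torus sum with $\gamma(j)_rF(j)_r$ does not go through. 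Once it is inserted, the remaining steps you describe --- reduction of parameters via Proposition \ref{prop:cancellation} at the cost of a weight $\leq n-2$ remainder, and the bookkeeping of the boundary strata via Assumption \ref{asm2} --- agree with the paper's argument.
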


\subsection{Proof: Counting rational points.}

In this subsection, we fix an element $\lambda$ of $\bF_q^{\times}$ such that $X_{\lambda}$ is smooth
and $C\neq\lambda^{\alpha}$, and prove Proposition \ref{prop:count_points_via_function}.

First, we recall a classical formula that expresses the number of rational points
of arbitrary hypersurface of $\bG_m$ in terms of Gauss sums.

\begin{notation}
Let $n$ and $N$ be positive integers and let $M=(m_{ij})_{i,j}$ be an $n\times N$ matrix with coefficients in $\bZ$.
We denote by $\varphi(M)$ the natural homomorphism
$\big(\widehat{\bF_q^{\times}}\big)^N\to \big(\widehat{\bF_q^{\times}}\big)^n$ defined by $M$, which is
explicitly expressed as
\[
    \varphi(M) \big((\chi_i)_{i=1,\dots,N}\big) = \left( \chi_1^{m_{j1}}\dots\chi_n^{m_{jN}}\right)_{j=1,\dots,n}.
\]
We always regard elements of $\big(\widehat{\bF_q^{\times}}\big)^N$ and $\big(\widehat{\bF_q^{\times}}\big)^n$ as column vectors.
\end{notation}

The following general theorem is classical \cite{Delsarte}, \cite{Gomida}; the readers also can 
find the proof in this context in \cite[Proposition 3]{MiyataniSano}.

\begin{proposition}
    \label{prop:hypersurfaceofgm}
    Let $n$ and $N$ be positive integers, let $c_1,\dots,c_N$ be elements of $\bF_q^{\times}$, and
    let $R=(r_{ij})_{i,j}\in M_{n,N}(\bZ)$ be an $n\times N$ matrix with coefficients in $\bZ$.
    Define a polynomial $f(X_1,\dots,X_n)\in\bF_q[X_1,\dots,X_n]$ by
    \[
        f(X_1,\dots,X_n)=\sum_{j=1}^Nc_jX_1^{r_{1j}}\dots X_n^{r_{nj}}.
    \]
    Then, the number of $n$-tuples $(x_1,\dots,x_n)\in(\bF_q^{\times})^n$ satisfying $f(x_1,\dots,x_n)=0$ equals
    \[
        \frac{(q-1)^n}{q} + \frac{(q-1)^{n+1-N}}{q}\sum_{(\chi_1,\dots,\chi_N)\in \Ker(\varphi(\widetilde{R}))}\prod_{j=1}^NG\big(\chi_j^{-1}\big)\chi_j(c_j),
    \]
    where
    \[
        \widetilde{R} = \left(\begin{array}{ccc} \multicolumn{3}{c}{\raisebox{-4pt}{\large $R$}}\\ && \\ \hline 1 & \dots & 1\end{array}\right).
    \]
\end{proposition}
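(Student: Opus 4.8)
The plan is to count the solutions by the standard device of detecting the equation $f=0$ with the orthogonality relation for the fixed nontrivial additive character $\theta$: for $a\in\bF_q$ the quantity $\frac{1}{q}\sum_{y\in\bF_q}\theta(ya)$ equals $1$ if $a=0$ and $0$ otherwise. Applying this with $a=f(x)$ and summing over all $x=(x_1,\dots,x_n)\in(\bF_q^{\times})^n$, the number of solutions becomes
\[
    \frac{1}{q}\sum_{x\in(\bF_q^{\times})^n}\sum_{y\in\bF_q}\theta\big(yf(x)\big).
\]
The first step is to isolate the contribution of $y=0$: since $\theta(0)=1$, it contributes $(q-1)^n/q$, which is exactly the main term in the statement. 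It then remains to analyze the sum over $y\in\bF_q^{\times}$.

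For $y\neq 0$ and $x\in(\bF_q^{\times})^n$, every monomial $y c_j x_1^{r_{1j}}\dots x_n^{r_{nj}}$ lies in $\bF_q^{\times}$, so I would apply the Gauss-sum Fourier inversion formula $\theta(z)=\frac{1}{q-1}\sum_{\chi}G\big(\chi^{-1}\big)\chi(z)$, valid for $z\in\bF_q^{\times}$, to each of the $N$ factors of $\theta(yf(x))=\prod_{j=1}^N\theta\big(y c_j x_1^{r_{1j}}\dots x_n^{r_{nj}}\big)$. Expanding the product over a tuple $(\chi_1,\dots,\chi_N)$ and using multiplicativity of each $\chi_j$ to separate the dependence on $y$, on the $c_j$, and on each $x_i$, the summand factors as characters evaluated at $y$ and at the $x_i$, times $\prod_{j=1}^N G\big(\chi_j^{-1}\big)\chi_j(c_j)$.

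Next I would interchange the order of summation and carry out the sums over $y$ and over each $x_i$ by multiplicative orthogonality. The sum over $y$ produces $q-1$ precisely when $\chi_1\dots\chi_N=\epsilon$, and the sum over $x_i$ produces $q-1$ precisely when $\prod_{j}\chi_j^{r_{ij}}=\epsilon$; these $n+1$ conditions say exactly that $(\chi_1,\dots,\chi_N)\in\Ker\big(\varphi(\widetilde{R})\big)$, where the bottom row of ones in $\widetilde{R}$ encodes the condition coming from the $y$-sum and the rows of $R$ encode those from the $x_i$-sums. Each surviving tuple contributes a factor $(q-1)^{n+1}$, and combining this with the prefactor $\frac{1}{q}\cdot\frac{1}{(q-1)^N}$ yields the coefficient $(q-1)^{n+1-N}/q$ in front of $\sum_{(\chi_j)\in\Ker(\varphi(\widetilde{R}))}\prod_{j}G\big(\chi_j^{-1}\big)\chi_j(c_j)$, as claimed.

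The computation is essentially routine; the only points demanding attention are the bookkeeping of the powers of $q-1$ and the observation that the inversion step is legitimate only because, once the $y=0$ term has been split off, all arguments of $\theta$ are genuinely nonzero. Restricting $x$ to the torus $(\bF_q^{\times})^n$ is precisely what guarantees this and makes the factorization into multiplicative characters possible, so I expect this bookkeeping, rather than any conceptual difficulty, to be the main thing to get right.
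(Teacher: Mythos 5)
Your proof is correct and is exactly the standard character-sum argument: detect $f=0$ with additive-character orthogonality, split off $y=0$, invert $\theta$ on $\bF_q^{\times}$ via Gauss sums, and kill the $y$- and $x_i$-sums by multiplicative orthogonality, which is precisely the kernel condition for $\varphi(\widetilde{R})$. The paper does not prove this proposition itself but cites it as classical (Delsarte, Gomida, and Proposition 3 of Miyatani--Sano), and your argument is the one found there, with the powers of $q-1$ bookkept correctly.
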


Now, let $X_{\lambda}\big(\bF_q\big)_0$ be the set of $\bF_q$-rational points $[x_1:\dots:x_{n+1}]$ of
$X_{\lambda}$ at least one of whose coordinates is zero, and
set $X_{\lambda}\big(\bF_q\big)_\ast := X_{\lambda}\big(\bF_q\big) \setminus X_{\lambda}\big(\bF_q\big)_0$.

\begin{proposition}
    The function $r\mapsto \# X_{\lambda}\big(\bF_{q^r}\big)_0$ is the sum of the function
    \[
        r \mapsto \sum_{i=0}^{n-1} (q^r)^i + D (q^r)^{(n-1)/2}  -\frac{(q^r-1)^n}{q^r} +u_r
    \]
    and a $q$-Weil function of weight $\neq n-1$.
    \label{prop:count_zero}
\end{proposition}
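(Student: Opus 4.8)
The plan is to reduce to the undeformed hypersurface, stratify the boundary by its vanishing coordinates, apply the Delsarte--type formula of Proposition~\ref{prop:hypersurfaceofgm} on each stratum, and then match \emph{only} the weight-$(n-1)$ part of the outcome against the target.

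First I would note that any boundary point has a vanishing coordinate, so $T_1\cdots T_{n+1}=0$ there; hence $F_{\lambda}$ and $F_0$ agree on $X_{\lambda}(\bF_{q^r})_0$, and therefore $X_{\lambda}(\bF_{q^r})_0=X_0(\bF_{q^r})_0$. In particular the boundary count is independent of $\lambda$, in agreement with the shape of the target, and everything reduces to the single equation $F_0$. Next, for a nonempty proper subset $J\subsetneq\{1,\dots,n+1\}$ let $U_J\subset\bP^n$ be the stratum on which $x_j\neq 0$ exactly for $j\in J$, so that $X_0(\bF_{q^r})_0=\bigsqcup_J\big(X_0\cap U_J\big)(\bF_{q^r})$. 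On $U_J$ the equation becomes $F_0^J=0$, where $F_0^J$ collects the monomials of $F_0$ supported on the variables indexed by $J$; by Proposition~\ref{prop:form_of_A} each monomial is $T_i^{n+1}$ or $T_i^nT_k$, so its support is immediate. If no monomial is supported on $J$ then $F_0^J\equiv 0$ and the stratum contributes the full $\#U_J(\bF_{q^r})=(q^r-1)^{\#J-1}$; otherwise I would apply the projectivised form of Proposition~\ref{prop:hypersurfaceofgm} to the $s$ surviving monomials to get
\[
    \frac{(q^r-1)^{\#J-1}}{q^r}+\frac{(q^r-1)^{\#J-s}}{q^r}\sum_{(\chi_j)\in\Ker(\varphi(\widetilde{A_J}))}\prod_{j=1}^{s}G\big(\chi_{j,r}^{-1}\big)\chi_{j,r}(c_{i_j}),
\]
where $\widetilde{A_J}$ is the matrix of Assumption~\ref{asm2}: the exponent matrix of these monomials, restricted to the variables in $J$, with a row of ones appended.

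Both the boundary count and the target are $q$-Weil functions (the latter by Lemma~\ref{lem:others_are_pure}(iii) and purity of the $q$-power terms), so the assertion is equivalent to the coincidence of their weight-$(n-1)$ components; all other weights are free to be absorbed into the error. I would therefore discard systematically every contribution of weight $\neq n-1$. By Proposition~\ref{prop:gauss_sum_is_pure}, after expanding the factor $(q^r-1)^{\#J-s}/q^r$ into powers $q^{r(i-1)}$, a summand carrying $q^{r(i-1)}$ and $k$ nontrivial characters has weight $2(i-1)+k$; thus the largest weight reachable from a stratum with $\#J=m$ is $2m-2$ (attained only when $s=0$) and is at most $2m-s-2\leq 2m-3$ when $s\geq 1$. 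Hence strata with $m<(n+1)/2$ never reach weight $n-1$ and may be dropped entirely. It is here that Assumption~\ref{asm2}, forcing all elementary divisors of $\widetilde{A_J}$ to divide $q-1$, is used to pin down the size and structure of $\Ker(\varphi(\widetilde{A_J}))$ so that this weight bound is exact.

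It then remains to collect the weight-$(n-1)$ contributions of the strata with $m\geq(n+1)/2$. The genuinely Gaussian terms (those with at least one nontrivial character) survive only when $k=n-2i+1$, and summing them over these $J$ reproduces $u_r$; the remaining weight-$(n-1)$ terms are pure powers of $q$, coming from the all-trivial character term of each stratum sum and, at the borderline $m=(n+1)/2$, from the $s=0$ strata, each of which lies entirely in $X_0$ and contributes the top term $(q^r)^{(n-1)/2}$. The number of such balanced strata carrying no supported monomial is exactly $D$, and reorganising all pure-power contributions yields $\sum_{i=0}^{n-1}(q^r)^i-(q^r-1)^n/q^r+D(q^r)^{(n-1)/2}$ modulo weight $\neq n-1$. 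The main obstacle is precisely this last reorganisation: one must track, across all strata at once, how the binomial expansions of the various $(q^r-1)^{\bullet}$ factors together with the trivial-character terms conspire to give the clean combination $\sum_i(q^r)^i-(q^r-1)^n/q^r$ with the correct coefficient $D$, while checking that the Gaussian weight-$(n-1)$ terms assemble into $u_r$ with the right signs. Keeping the weight bound honest at the borderline strata $m=(n+1)/2$, where Assumption~\ref{asm2} is invoked most delicately, is where the calculation demands the most care.
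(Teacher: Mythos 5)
Your proposal follows essentially the same route as the paper: stratify $X_{\lambda}(\bF_{q^r})_0$ by the set $J$ of nonvanishing coordinates, apply Proposition~\ref{prop:hypersurfaceofgm} on each stratum (this is exactly the paper's Lemma~\ref{lem:invertible_on_J}), discard everything of weight $\neq n-1$, and reassemble the pure $q$-powers via the binomial identity $\sum_{t}\binom{n+1}{t}(q^r-1)^t=(q^r)^{n+1}$, with the $s=0$ strata of size $(n+1)/2$ producing the $D(q^r)^{(n-1)/2}$ term and the Gauss-sum terms assembling into $u_r$. The only real difference is presentational: your preliminary observation that the boundary count is independent of $\lambda$ is left implicit in the paper.
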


\begin{proof}
    For a proper non-empty subset $J\subset\{1,\dots,n+1\}$ and a positive integer $r$,
    denote by $s_r(J)$ the number of $\bF_{q^r}$-rational points $[x_1:\dots:x_{n+1}]$ of
    $X_{\lambda}$ such that $x_j\neq 0$ if and only if $j\in J$.
    Then, we have
    \[
        \# X_{\lambda}\big(\bF_{q^r}\big)_0 = \sum_{t=1}^{n}\sum_{\# J=t}s_r(J).
    \]
    By Lemma \ref{lem:invertible_on_J} below, each $s_r(J)$ is of the form
    \[
        \frac{(q^r-1)^{t-1}}{q^r} + N'_J(q^r)^{(n-1)/2}+
        \sum_{i=0}^{\# J-s}(-1)^{\# J-s-i}q^{i-1}\sum\prod_{j=1}^sG\big(\chi_j^{-1}\big)\chi_j(c_j)
    \]
    plus a $q$-Weil function of weight $\neq n-1$,
    where $N'_J$ is the number given in the statement of Lemma \ref{lem:invertible_on_J}.
    This shows that $\# X_{\lambda}\big(\bF_{q^r}\big)_0$ is written as
    \begin{equation}
        \sum_{t=1}^{n}\binom{n+1}{t}\frac{(q^r-1)^{t-1}}{q^r} + D(q^r)^{(n+1)/2} + u_r
        \label{eq:tmp_sum}
    \end{equation}
        plus a $q$-Weil function of weight $\neq n-1$.
    Since
    \[
        \sum_{t=0}^{n+1}\binom{n+1}{t}\frac{(q^r-1)^t}{q^r}=(q^r)^n,
    \]
    the first term of (\ref{eq:tmp_sum}) equals
    \[
        \frac{(q^r)^n}{q^r-1} - \frac{1}{q^r(q^r-1)}-\frac{(q^r-1)^n}{q^r}
        = \frac{(q^r)^n-1}{q^r-1}-\frac{(q^r-1)^n}{q^r} + \frac{1}{q^r}.
    \]
    
\end{proof}

\begin{lemma}
    Let $J$ be a subset of $\{1,\dots, n+1\}$ that satisfies $1\leq \# J\leq n$.
    We define a number $N'_J$ as follows;
    it is $1$ if $\# J=(n+1)/2$ and for each $i\in\{1,2,\dots,n+1\}$ there exists $j\not\in J$ with
    $a_{ji}\geq 1$; it is $0$ otherwise.
    Then, the function
    \[
        r \mapsto s_r(J) \defeq \#\Set{[x_1:\dots:x_{n+1}]\in X_{\lambda}\big(\bF_{q^r}\big) | x_j\neq 0 \text{ if and only if } j\in J}.
    \]
    can be written as the sum of the function
    \[
        \frac{(q^r-1)^{\# J-1}}{q^r} + N'_J(q^r)^{(n-1)/2} + 
        \sum_{i=0}^{\# J-s}(-1)^{\# J-s-i}q^{i-1}\sum\prod_{j=1}^sG\big(\chi_j^{-1}\big)\chi_j(c_j)
    \]
    and a $q$-Weil function of weight $\neq n-1$.
    Here, $(\chi_1,\dots,\chi_s)$ runs through $\Ker(\varphi(\widetilde{A}))$
    so that exactly $n-2i+1$ components are non-trivial.
    \label{lem:invertible_on_J}
\end{lemma}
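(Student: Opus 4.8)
The plan is to compute $s_r(J)$ exactly by a character-sum formula and then isolate its weight-$(n-1)$ part. Write $t=\#J$ and, as in Assumption \ref{asm2}, let $i_1,\dots,i_s$ index the monomials $T^{a_k}$ whose support lies in $J$. Setting to zero the coordinates outside $J$ annihilates every other monomial as well as the term $\lambda T_1\cdots T_{n+1}$ (since $t\le n$), so on this stratum $F_\lambda$ restricts to the homogeneous polynomial $\widetilde F=\sum_{m=1}^s c_{i_m}T^{a_{i_m}}$ of degree $n+1$ in the variables $\{T_j\}_{j\in J}$. Because $\widetilde F$ is homogeneous and the scaling action of $\bF_{q^r}^{\times}$ is free on the open torus, $s_r(J)$ equals the number of solutions of $\widetilde F=0$ in $(\bF_{q^r}^{\times})^{t}$ divided by $q^r-1$.

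First I would apply Proposition \ref{prop:hypersurfaceofgm} over $\bF_{q^r}$ to $\widetilde F$, with exponent matrix $R=(a_{j_\ell,i_m})$ and its homogenization $\widetilde R$ (exactly the matrix of Assumption \ref{asm2}). Dividing the resulting count by $q^r-1$ yields the exact identity
\[
  s_r(J)=\frac{(q^r-1)^{t-1}}{q^r}+\frac{(q^r-1)^{t-s}}{q^r}\sum_{\chi\in\Ker\big(\varphi(\widetilde R)\big)}\prod_{m=1}^s G\big(\chi_m^{-1}\big)\chi_m\big(c_{i_m}\big),
\]
the kernel being taken over $\bF_{q^r}$; its first term is already the $(q^r-1)^{t-1}/q^r$ of the statement. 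Assumption \ref{asm2} is precisely what tames the second term: since every elementary divisor of $\widetilde R$ divides $q-1$, each character of $\Ker\big(\varphi(\widetilde R)\big)$ over $\bF_{q^r}$ is the norm pull-back of one over $\bF_q$, and Proposition \ref{prop:gauss_sum_is_pure} then shows that a single tuple with exactly $e$ nontrivial components contributes, as a function of $r$, $(-1)^s$ times a pure $q$-Weil function of weight $e$.

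Next I would carry out the weight bookkeeping. Expanding $(q^r-1)^{t-s}$ and recording that a term carrying $i$ factors of $q^r$ together with a tuple of exactly $e$ nontrivial characters is pure of weight $2i+e-2$, the locus of weight $n-1$ is precisely $e=n+1-2i$; this reproduces the constraint ``exactly $n-2i+1$ components nontrivial'' in the statement. The hard part is the matching of coefficients: the naive binomial expansion produces a factor $\binom{t-s}{i}$ in front of each $q^{i-1}\sum_{e=n+1-2i}\prod_m G\big(\chi_m^{-1}\big)\chi_m\big(c_{i_m}\big)$, whereas the asserted formula carries coefficient $1$. The point to establish is therefore that the inner character sums attached to the intermediate indices vanish, so that only the extreme index survives and the surviving binomial factors equal $1$. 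This is where the smoothness of $X_\lambda$ and the rigid shape of $A$ from Proposition \ref{prop:form_of_A} (each monomial is $T_k^{n+1}$ or $T_k^nT_m$, and the off-diagonal $1$'s occupy distinct rows) enter: they dictate which sub-collections of the surviving monomials admit kernel characters that are simultaneously nontrivial, ruling out the intermediate configurations. I expect this combinatorial vanishing to be the main obstacle of the argument.

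It remains to account for the term $N'_J(q^r)^{(n-1)/2}$, which comes from the degenerate case $s=0$, i.e. when no monomial is supported on $J$ and $\widetilde F\equiv 0$. Then the stratum is the full projective torus and $s_r(J)=(q^r-1)^{t-1}$, so $s_r(J)-(q^r-1)^{t-1}/q^r=(q^r-1)^{t}/q^r$. The shape of $A$ forces $t\le(n+1)/2$ here, since sending each $k\in J$ to the row carrying the off-diagonal $1$ of its monomial embeds $J$ into its complement; hence $(q^r-1)^t/q^r$ has a weight-$(n-1)$ part exactly when $t=(n+1)/2$, and that part is $(q^r)^{(n-1)/2}$. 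As $N'_J=1$ precisely when $s=0$ and $\#J=(n+1)/2$, this matches the claim, and everything discarded along the way is visibly of weight $\neq n-1$.
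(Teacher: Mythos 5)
Your route is the same as the paper's: restrict $F_{\lambda}$ to the stratum indexed by $J$ (the deformation term and all monomials not supported on $J$ die), apply Proposition \ref{prop:hypersurfaceofgm} to the surviving $s$ monomials and divide by $q^r-1$, use Assumption \ref{asm2} to identify $\Ker(\varphi(\widetilde{A_J}))$ over $\bF_{q^r}$ with the norm pull-back of the kernel over $\bF_q$, and then sort terms by weight via Proposition \ref{prop:gauss_sum_is_pure}. Your treatment of the degenerate case $s=0$ is also correct and matches the paper's: $N'_J=1$ is exactly the conjunction of $s=0$ and $\#J=(n+1)/2$, and your injectivity argument (sending $k\in J$ to the row carrying the off-diagonal $1$ of $T^{a_k}$) correctly shows $\#J\le(n+1)/2$ whenever $s=0$. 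The paper merely packages the extreme cases differently, disposing of $\#J\le n/2$ and $N'_J=1$ separately at the outset.

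The one genuine gap is the step you flag yourself but do not carry out: that the inner sums attached to the intermediate indices $0<i<\#J-s$ are empty, so that only $i=\#J-s$ survives and the binomial coefficient $\binom{\#J-s}{i}$ coming from expanding $(q^r-1)^{\#J-s}$ equals $1$. This is indeed what must be proved, and it does follow from Proposition \ref{prop:form_of_A} by a short count rather than by any analysis of which kernel characters exist. A kernel tuple contributing at weight $n-1$ to the index $i$ has $e=n+1-2i$ non-trivial components, and trivially $e\le s$; so it suffices to show $s\le n+1-2(\#J-s)$, equivalently $n+1-s\le 2(n+1-\#J)$. By Proposition \ref{prop:form_of_A} each monomial $T^{a_k}$ is either $T_k^{n+1}$ or $T_k^nT_{m(k)}$ with $k\mapsto m(k)$ injective (at most one $1$ per row); such a monomial fails to be supported on $J$ only if $k\notin J$ or $m(k)\notin J$, and each of these events occurs for at most $n+1-\#J$ values of $k$, whence $n+1-s\le 2(n+1-\#J)$. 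Therefore every non-empty inner sum forces $i=(n+1-e)/2\ge\#J-s$, the coefficient discrepancy never arises, and your argument closes. (For what it is worth, the paper's own write-up is equally silent on this point, so supplying this count is a genuine improvement rather than a repair of something the paper already does.)
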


\begin{proof}
    If $\# J\leq n/2$, then $s_r(J)$ can be considered as the number of $\bF_{q^r}$-rational points
    of a closed subscheme of $\Gm_{,\bF_q}^{\# J-1}$ with $\# J-1\leq n/2-1$,
    and therefore each term of the function in the statement is a $q$-Weil function of weight $\leq n-2$.
    Therefore, we assume that $\# J\geq(n+1)/2$.
    If $N'_J=1$, then the claim is also trivial since $X_J$ is isomorphic to $\Gm_{\bF_q}^{(n+1)/2-1}$;
    now we assume that $N'_J=0$.
    For simplicity, we change the coordinates so that $J=\{1,\dots,\# J\}$,
    and so that the following two conditions hold for an $s\in\{1,2,\dots,\# J\}$:
    
    (i) $a_{ij}=0$ for all $i\not\in J$ and $j=1,\dots,s$ and

    (ii) for all $j=s+1,\dots,n+1$, there exists an $i\not\in J$ such that $a_{ij}\geq 1$.

    Now, by Proposition \ref{prop:hypersurfaceofgm},
    \begin{eqnarray}
        s_1(J) &=& \frac{1}{q-1}\#\Set{ (x_1,\dots,x_{\# J})\in\bF_q^{\times} | c_1x^{a_1}+\dots+c_sx^{a_s}=0 }\nonumber\\
        &=& \frac{(q-1)^{\# J-1}}{q} + \frac{(q-1)^{\# J-s}}{q}\sum_{(\chi_1,\dots,\chi_s)\in\Ker(\varphi(\widetilde{A_J}))}\prod_{j=1}^sG\big(\chi_j^{-1}\big)\chi_j(c_j),
        \label{eq:toric}
    \end{eqnarray}
    where $A_J$ denotes the matrix
    \[
        \begin{pmatrix}
            a_{11} & \dots & a_{1s} \\
            & \vdots & \\
            a_{\# J,1} & \dots & a_{\# J,s}\\
        \end{pmatrix}.
    \]
    Therefore, if we denote by $d_1',\dots,d_s'$ the elementary divisors of the $(\# J+1)\times s$ matrix $\widetilde{A_J}$,
    then by hypothesis, we see as in the argument after Assumption \ref{asm2} that
    the second term of (\ref{eq:toric}) is $(q-1)$ times the sum of $d_1'\dots d_s'$ numbers
    that are the products of $s\leq t$ Gauss sums.

    We may do this computation in the same way for various $r$, 
    and therefore, the statement similar to the argument after Assumption \ref{asm2}
    and Proposition \ref{prop:gauss_sum_is_pure} shows the claim.
    
\end{proof}

Now, let us investigate the function $r\mapsto \# X_{\lambda}\big(\bF_{q^r}\big)_{\ast}$.
For the investigation, we prove the following fact.

\begin{proposition}
    The function $r\mapsto \# X_{\lambda}\big(\bF_{q^r}\big)_\ast$ is the sum of the function
    \[
        r\mapsto \frac{(q^r-1)^n}{q^r}+(-1)^n\sum_{i=0}^{d-1}\gamma(j)_rF(j)_r 
    \]
    and a $q$-Weil function of weight $\leq n-2$.
    \label{prop:count_star}
\end{proposition}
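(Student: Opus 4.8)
The plan is to count the torus points of $X_\lambda$ directly with Proposition~\ref{prop:hypersurfaceofgm}, and then to sort the resulting Gauss-sum expression by weight, identifying its weight-$(n-1)$ part with the hypergeometric terms. A point of $X_\lambda(\bF_{q^r})_\ast$ is an $\bF_{q^r}$-point of $X_\lambda$ lying in $\Gm^{n}\subset\bP^n$, so its count is $1/(q^r-1)$ times the number of solutions of $F_\lambda=0$ in $(\bF_{q^r}^{\times})^{n+1}$. I would apply Proposition~\ref{prop:hypersurfaceofgm} to $F_\lambda$ regarded as a sum of $N=n+2$ monomials in $n+1$ variables, namely $T^{a_1},\dots,T^{a_{n+1}}$ together with $T_1\cdots T_{n+1}$ (coefficient $c_{n+2}=-\lambda$). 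Since $(n+1)+1-N=0$, the power of $q^r-1$ in the character-sum term collapses, and after dividing by $q^r-1$ one obtains the main term $(q^r-1)^n/q^r$ together with
\[
    \frac{1}{q^r(q^r-1)}\sum_{\chi\in\Ker(\varphi(\widetilde{R}))}\prod_{j=1}^{n+2}G\big(\chi_j^{-1}\big)\chi_j(c_j),
\]
where $\widetilde{R}$ is the exponent matrix augmented by a bottom row of $1$'s. The bottom row forces $\chi_{n+2}=(\chi_1\cdots\chi_{n+1})^{-1}$, and the remaining rows then reduce to $(\chi_1,\dots,\chi_{n+1})\in\Ker(\varphi(A'))$ with $A'=(a_{ij}-1)$; thus the sum is indexed exactly by the group whose quotient by $\langle\alpha\rangle$ is represented by $s_0,\dots,s_{d-1}$.

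The second step is to split this sum into the $d$ cosets of the cyclic subgroup generated by $\alpha={}^{\rt}(\alpha_1,\dots,\alpha_{n+1})$, which lies in $\Ker(\varphi(A'))$ because $A'\alpha=0$ (Proposition~\ref{prop:alpha}). On the coset of $s_j$ I would write $\chi_i=\rho_r^{s_{ij}}\psi^{\alpha_i}$ for $\psi$ running over $\widehat{\bF_{q^r}^{\times}}$; then $\chi_{n+2}^{-1}=\rho_r^{|s_j|}\psi^{\alpha}$, so that $G(\chi_{n+2}^{-1})$ becomes the single ``numerator'' Gauss sum $G(\rho_r^{|s_j|}\psi^{\alpha})$. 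Applying the Hasse--Davenport multiplication formula to the block of $\alpha$ factors carrying $\psi^{\alpha}$ and to each block of $\alpha_i$ factors carrying $\psi^{\alpha_i}$ collapses the coset sum into a single sum over $\psi$ of a ratio of Gauss sums, which is precisely a constant multiple of the (not-yet-reduced) hypergeometric function
\[
    \tghf{\alpha}{\alpha}{\rho^{t_j}[\varphi_{\alpha}]}{\rho^{t_{1j}}[\varphi_{\alpha_1}],\dots,\rho^{t_{n+1,j}}[\varphi_{\alpha_{n+1}}]}{C\lambda^{-\alpha}}_{\bF_{q^r}};
\]
here the matching of parameters uses the identities $\rho_r^{t_{ij}\alpha_i}=\rho_r^{s_{ij}}$ and $\rho_r^{t_j\alpha}=\rho_r^{|s_j|}$. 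The character values $\chi_i(c_i)$ and $\chi_{n+2}(-\lambda)$, the constants in $\alpha_i$ and $\alpha$ produced by Hasse--Davenport, and the normalizing Gauss-sum products are exactly what package into the factor $\gamma(j)_r$ of the Notation, while the overall $(-1)^n$ collects the $\chi(-1)$-factors and the passage from $G(\chi^{-1})$ to $G(\chi)$. (That the computation over $\bF_{q^r}$ genuinely produces the norm-characters $\rho_r,\varphi_{\beta,r}$ and the representatives $\rho_r^{s_{ij}}$ is because the isomorphism $\Ker(f_{q-1})\to\Ker(f_{q^r-1})$ is induced by $\chi\mapsto\chi\circ\Norm$, i.e. $\rho_r=\rho\circ\Norm$.)

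It remains to pass from this full hypergeometric function to the reduced one appearing in $F(j)_r$. When $s_j$ has vanishing coordinates, some upper and lower parameters coincide, and Proposition~\ref{prop:cancellation} replaces the full function by the reduced one modulo a $q$-Weil function of weight at least two smaller; case~(ii) of that proposition, governing a trivial parameter (which occurs exactly when $|s_j|\equiv 0\bmod q-1$), together with the normalization $1/q^r$ coming out of Proposition~\ref{prop:hypersurfaceofgm}, is what produces the factor $q^{\delta_{|s_j|}-1}$ built into $F(j)_r$. Weighing the discarded terms against $\gamma(j)_r$ by Lemma~\ref{lem:others_are_pure} shows that they all have weight $\leq n-2$, so the coset of $s_j$ contributes $(-1)^n\gamma(j)_rF(j)_r$ up to weight $\leq n-2$; summing over $j$ gives the assertion.

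The step I expect to be the real obstacle is not structural but the bookkeeping of constants and multiplicities. One must verify that the Hasse--Davenport constants, the character values, and the Gauss-sum normalizations assemble cleanly into $\gamma(j)_r$ and into the definitional normalization of the hypergeometric function, keeping track of the fact that $\psi\mapsto(\psi^{\alpha_1},\dots,\psi^{\alpha_{n+1}},\psi^{\alpha})$ is many-to-one so that the coset sum and the hypergeometric sum range over $\widehat{\bF_{q^r}^{\times}}$ with compatible multiplicities; and one must check, case by case according to which entries of $s_j$ vanish and whether $|s_j|\equiv 0\bmod q-1$, that the cancellation errors really fall into weight $\leq n-2$ and do not disturb the weight-$(n-1)$ part that carries the hypergeometric function.
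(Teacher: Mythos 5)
Your proposal follows the same route as the paper's own proof: apply the Delsarte-type point count (Proposition \ref{prop:hypersurfaceofgm}) to the $n+2$ monomials of $F_{\lambda}$, decompose $\Ker\big(\varphi(\widetilde{A})\big)$ into the $d$ cosets represented by $s_0,\dots,s_{d-1}$ translated by multiples of ${}^{\rt}(\alpha_1,\dots,\alpha_{n+1})$, collapse each coset sum by the Hasse--Davenport multiplication formula (the paper's Lemma \ref{lem:multi_hyper}) into $\gamma(j)_r$ times an unreduced hypergeometric function, and then invoke Proposition \ref{prop:cancellation} to pass to the reduced parameters, with case (ii) producing the factor $q^{\delta_{|s_j|}-1}$. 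This matches the paper's argument essentially step for step, including the final weight bookkeeping.
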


\begin{proof}
    In this proof, we only investigate the number $X_{\lambda}\big(\bF_{q^r}\big)_{\ast}$ for $r=1$
    because we may compute it for general $r$ in exactly the same way.

    By using Proposition \ref{prop:hypersurfaceofgm}, $\# X_{\lambda}\big(\bF_q\big)_{\ast}$ equals
    \begin{equation}
        \frac{(q-1)^{n}}{q} + \frac{1}{q(q-1)}\sum_{ {}^{\rt}(\chi_1,\dots,\chi_{n+2})\in\Ker\varphi(\widetilde{A})}
        \prod_{i=1}^{n+1}G\big(\overline{\chi_i}\big)\chi_i(c_i)\cdot G\big(\overline{\chi_{n+2}}\big)\chi_{n+2}(-\lambda).
        \label{eq:count_star}
    \end{equation}
%
    Let us describe the kernel of $\varphi(\widetilde{A})$.
    Recall that we fixed a generator $\rho$ of $\widehat{\bF_q^{\times}}$ in Notation \ref{notation},
    and take $k_i\in\bZ$ for $i=1,\dots,n+2$ so that $\chi_i=\rho^{k_i}$.
    With this notation, the definition of $s_{ij}$'s shows that
    ${}^{\rt}(\chi_1,\dots,\chi_{n+2})$ is an element of $\Ker(\varphi(\widetilde{A}))$
    if and only if there exists an index $j=0,\dots,d-1$ and $a=0,\dots,q-2$ such that
    \[
        k_i \equiv s_{ij} + a\alpha_i\quad (i=1,\dots,n+1)\quad\text{and}\quad k_{n+2} \equiv -|s_j|-a\alpha;
    \]
    in this case, the choice of $j$ and $a$ is unique.
    Now, we have shown that the second term of (\ref{eq:count_star}) is
    \begin{eqnarray*}
        && \sum_{j=0}^{d-1}\sum_{a=0}^{q-2}\left\{\prod_{i=1}^{n+1}G\big(\rho^{-s_{ij}-a\alpha_i}\big)\cdot
        G\big(\rho^{|s_j|+a\alpha}\big) \prod_{i=1}^{n+1}\rho^{s_{ij}+a\alpha_i}\big(c_i\big)\rho^{-|s_j|-a\alpha}\big(-\lambda\big)\right\}\\
        &=& \sum_{j=0}^{d-1}\left\{\prod_{i=1}^{n+1}\rho^{s_{ij}}\big(c_i\big)\cdot\rho^{-|s_j|}\big(-\lambda\big)
        \sum_{a=0}^{q-2}\prod_{i=1}^{n+1}G\big(\rho^{-s_{ij}-a\alpha_i}\big)\cdot
        G\big(\rho^{|s_j|+a\alpha}\big) \rho^{a}\left((-1)^{\alpha}\prod_{i=1}^{n+1}c_i^{\alpha_i}\cdot\lambda^{-\alpha}\right)\right\}.
    \end{eqnarray*}
    For each $j$, we have $s_{ij}=\alpha_{i}t_{ij}$ and $|s_j|=\alpha t_j$, therefore 
    Lemma \ref{lem:multi_hyper} below shows that
    \begin{eqnarray*}
        && \frac{1}{q-1}\sum_{a=0}^{q-2}\prod_{i=1}^{n+1}G\big(\rho^{-s_{ij}-a\alpha_i}\big)\cdot
        G\big(\rho^{|s_j|+a\alpha}\big) \rho^a\left((-1)^{\alpha}\prod_{i=1}^{n+1}c_i^{\alpha_i}\cdot\lambda^{-\alpha}\right)\\
        &=& (-1)^n\prod_{i=1}^{n+1}\rho^{-t_{ij}}\big(\alpha_i^{-\alpha_i}\big)\cdot\rho^{t_j}\left(\alpha^{\alpha}\right)
        \prod_{i=1}^{n+1}\left\{G\big(\rho^{-t_{ij}}\big)\prod_{b_i=1}^{\alpha_i-1}\frac{G\big(\rho^{-t_{ij}}\varphi_{\alpha_i}^{b_i}\big)}{G\big(\varphi_{\alpha_i}^{b_i}\big)}\right\}
        G\big(\rho^{t_j}\big)\prod_{b=1}^{\alpha-1}
        \frac{G\big(\rho^{t_j}\varphi_{\alpha}^b\big)}{G\big(\varphi_{\alpha}^b\big)}\\
        && \qquad \times\tghf{\alpha}{\alpha}{\rho^{t_j}[\varphi_{\alpha}]}
        {\rho^{t_{1j}}[\varphi_{\alpha_1}],\dots, \rho^{t_{n+1,j}}[\varphi_{\alpha_{n+1}}]}{C\lambda^{-\alpha}}_{\bF_q}.
    \end{eqnarray*}

    Finally, the second term of (\ref{eq:count_star}) for various $r$
    is the sum of the function $r\mapsto (-1)^n\sum_{j=0}^{d-1}\gamma(j)_rF(j)_r$
    and a $q$-Weil function of weight $\leq n-2$ by Proposition \ref{prop:cancellation};
    use (i) for $j=0$ and for $j\neq 0$ such that $\delta_{|s_j|}=1$, and use (ii) for the other $j$'s.
\end{proof}

\begin{lemma}
    Let $\alpha_1,\dots,\alpha_{n+1}$ be positive integers, put $\alpha:=\alpha_1+\dots+\alpha_{n+1}$, and
    assume that $q$ is congruent to $1$ modulo all $\alpha_i$'s and modulo $\alpha$.

    Let $A_1,\dots,A_{n+1},B$ be characters on $\bF_q^{\times}$.
    Then, we have the equation
    \begin{eqnarray*}
        && \sum_{\chi\in\widehat{\bF_q^{\times}}}\prod_{i=1}^{n+1}G\big((\overline{A_i\chi})^{\alpha_i}\big)
        G\big((B\chi)^{\alpha}\big)\chi\big((-1)^{\alpha}x\big)\\
        &=& (-1)^n(q-1)\prod_{i=1}^{n+1}A_i\big(\alpha_i^{-\alpha_i}\big)\cdot B\big(\alpha^{\alpha}\big)
        \prod_{i=1}^{n+1}\left\{G\big(\overline{A_i}\big)\prod_{b_i=1}^{\alpha_i-1}\frac{G\big(\overline{A_i}\varphi_{\alpha_i}^{b_i}\big)}{G\big(\varphi_{\alpha_i}^{s_i}\big)}\right\}\cdot G\big(B\big)
        \prod_{b=1}^{\alpha-1}\frac{G\big(B\varphi_{\alpha}^b\big)}{G\big(\varphi_{\alpha}^b\big)}\\
        && \qquad \times \tghf{\alpha}{\alpha}{B[\varphi_{\alpha}]}{A_1[\varphi_{\alpha_1}],\dots,A_{n+1}[\varphi_{\alpha_{n+1}}]}{\frac{\alpha^{\alpha}}{\alpha_1^{\alpha_1}\dots\alpha_{n+1}^{\alpha_{n+1}}}x}_{\bF_q}.
    \end{eqnarray*}
    \label{lem:multi_hyper}
\end{lemma}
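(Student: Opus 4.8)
The plan is to reduce the whole identity to the Hasse--Davenport multiplication formula for Gauss sums, which for a character $\mu$ and a positive integer $m$ dividing $q-1$ (with $\psi$ of exact order $m$) reads
\[
    G\big(\mu^m\big) = \mu\big(m^m\big)\,\frac{\prod_{b=0}^{m-1}G\big(\mu\psi^b\big)}{\prod_{b=1}^{m-1}G\big(\psi^b\big)} .
\]
I would apply this to each of the $n+2$ Gauss factors in the summand on the left: to $G\big((\overline{A_i\chi})^{\alpha_i}\big)$ with $\mu=\overline{A_i\chi}$, $m=\alpha_i$, $\psi=\varphi_{\alpha_i}$ (for $i=1,\dots,n+1$), and to $G\big((B\chi)^{\alpha}\big)$ with $\mu=B\chi$, $m=\alpha$, $\psi=\varphi_{\alpha}$. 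The hypothesis that $q\equiv 1$ modulo every $\alpha_i$ and modulo $\alpha$ guarantees that the characters $\varphi_{\alpha_i}$ and $\varphi_{\alpha}$ of the required orders exist, so the formula applies; one should note that it remains valid for the values of $\chi$ for which $\mu^m$ becomes trivial, provided one keeps the convention $G\big(\epsilon\big)=-1$ throughout.

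Next I would separate, in each factor, the $\chi$-independent part of $\mu\big(m^m\big)$ from its $\chi$-part: $\overline{A_i\chi}\big(\alpha_i^{\alpha_i}\big)$ yields the constant $A_i\big(\alpha_i^{-\alpha_i}\big)$ together with $\overline{\chi}\big(\alpha_i^{\alpha_i}\big)$, and $B\chi\big(\alpha^{\alpha}\big)$ yields $B\big(\alpha^{\alpha}\big)$ together with $\chi\big(\alpha^{\alpha}\big)$. Pulling all these constants, as well as the denominators $\prod_{b=1}^{\alpha_i-1}G\big(\varphi_{\alpha_i}^{b}\big)$ and $\prod_{b=1}^{\alpha-1}G\big(\varphi_{\alpha}^{b}\big)$, out of the sum, the accumulated $\chi$-powers combine with the twist $\chi\big((-1)^{\alpha}x\big)$ into $\chi(-1)^{\alpha}\,\chi(y)$ with $y=\tfrac{\alpha^{\alpha}}{\alpha_1^{\alpha_1}\cdots\alpha_{n+1}^{\alpha_{n+1}}}x$, which is precisely the sign twist and argument demanded on the right-hand side. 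What remains inside the sum is
\[
    \sum_{\chi}\prod_{i=1}^{n+1}\prod_{b=0}^{\alpha_i-1}G\big(\overline{A_i\chi}\,\varphi_{\alpha_i}^{b}\big)\cdot\prod_{b=0}^{\alpha-1}G\big(B\chi\,\varphi_{\alpha}^{b}\big)\cdot\chi(-1)^{\alpha}\chi(y).
\]
Comparing with the definition of $\tghf{\alpha}{\alpha}{\,\cdot\,}{\,\cdot\,}{y}_{\bF_q}$, I would read the products $\prod_{b}G\big(B\varphi_{\alpha}^{b}\chi\big)$ as the numerators belonging to the upper block $B[\varphi_{\alpha}]$ and, after the harmless reindexing $\varphi_{\alpha_i}^{b}\leftrightarrow\varphi_{\alpha_i}^{-b}$ inside each product (turning $\overline{A_i\chi}\,\varphi_{\alpha_i}^{b}$ into $\overline{A_i\varphi_{\alpha_i}^{b}\chi}$), the products over $b$ as the numerators belonging to the lower blocks $A_i[\varphi_{\alpha_i}]$. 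Multiplying and dividing by the parameter Gauss sums $G\big(B\varphi_{\alpha}^{b}\big)$ and $G\big(\overline{A_i\varphi_{\alpha_i}^{b}}\big)$ then exhibits the residual sum as $(q-1)$ times $\tghf{\alpha}{\alpha}{B[\varphi_{\alpha}]}{A_1[\varphi_{\alpha_1}],\dots,A_{n+1}[\varphi_{\alpha_{n+1}}]}{y}_{\bF_q}$, while the $b=0$ terms of these parameter products assemble exactly into the Gauss-sum prefactor $\prod_i\{G(\overline{A_i})\prod_{b_i=1}^{\alpha_i-1}G(\overline{A_i}\varphi_{\alpha_i}^{b_i})/G(\varphi_{\alpha_i}^{b_i})\}\cdot G(B)\prod_{b=1}^{\alpha-1}G(B\varphi_{\alpha}^{b})/G(\varphi_{\alpha}^{b})$ recorded in the statement.

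The step I expect to be the genuine obstacle is pinning down the overall numerical constant, and in particular the sign $(-1)^{n}$, rather than any single structural identification above. This demands scrupulous bookkeeping of the convention $G(\epsilon)=-1$ across all $n+2$ applications of the multiplication formula and across the reindexings, since it is precisely the interplay of the $n+1$ lower blocks of sizes $\alpha_i$ with the single upper block of size $\alpha$ that controls the global sign; a single miscounted factor of $-1$ is what separates the correct answer from a spurious one. To fix the conventions unambiguously I would first run the computation on the Dwork specialization $\alpha_1=\dots=\alpha_{n+1}=1$, $\alpha=n+1$ (where only the last factor needs the multiplication formula and the lower blocks degenerate to the single characters $A_i$), and only then assert the constant in the general case.
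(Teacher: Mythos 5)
Your proposal follows essentially the same route as the paper's proof: apply the Hasse--Davenport product formula to each of the $n+2$ Gauss-sum factors, pull the $\chi$-independent constants and the parameter Gauss sums out of the sum, and recognize the remainder as $(q-1)$ times the stated hypergeometric function. The one point you flag as delicate --- the overall sign --- is exactly where the paper's argument gets its constant: it quotes the product relation with a leading factor $-1$ in each application (the form consistent with the convention $G(\epsilon)=-1$ in the degenerate cases), so the $n+2$ applications contribute $(-1)^{n+2}=(-1)^{n}$.
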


\begin{proof}
    Davenport--Hasse relation \cite[$(0.9_1)$]{Davenport-Hasse:NKGZF} shows that
    \begin{eqnarray*}
        G\big((\overline{A_i\chi})^{\alpha_i}\big) &=& -G\big(\overline{A_i\chi}\big)\prod_{s_i=1}^{\alpha_i-1}
        \frac{G\big(\overline{A_i\chi}\varphi_{\alpha_i}^{s_i}\big)}{G\big(\varphi_{\alpha_i}^{s_i}\big)}\big(A_i\chi\big)\big(\alpha_i^{-\alpha_i}\big),\\
        G\big((B\chi)^{\alpha}\big) &=& -G\big(B\chi\big)\prod_{s=1}^{\alpha-1}
        \frac{G\big(B\chi\varphi_{\alpha}^s\big)}{G\big(\varphi_{\alpha}^s\big)}\big(B\chi\big)\big(\alpha^{\alpha}\big).
    \end{eqnarray*}
    Therefore, the left-hand side of the equation in the statement equals
    \begin{eqnarray*}
        &&(-1)^n\prod_{i=1}^{n+1}A_i\big(\alpha_i^{-\alpha_i}\big)\cdot B\big(\alpha^{\alpha}\big)\\
        && \qquad \times
        \sum_{\chi\in\widehat{\bF_q^{\times}}}\prod_{i=1}^{n+1} G\big(\overline{A_i\chi}\big)\cdot G\big(B\chi\big)
        \prod_{i=1}^{n+1}\prod_{b_i=1}^{\alpha_i-1}
        \frac{G\big(\overline{A_i\chi}\varphi_{\alpha_i}^{b_i}\big)}{G\big(\varphi_{\alpha_i}^{b_i}\big)}\cdot
        \prod_{b=1}^{\alpha-1}
        \frac{G\big(B\chi\varphi_{\alpha}^b\big)}{G\big(\varphi_{\alpha}^b\big)}
        \chi\left((-1)^{\alpha}\frac{\alpha^{\alpha}}{\alpha_1^{\alpha_1}\dots\alpha_{n+1}^{\alpha_{n+1}}}x\right).
    \end{eqnarray*}
    This shows the proposition.
\end{proof}

Proposition \ref{prop:count_zero} and \ref{prop:count_star} completes the proof of Proposition \ref{prop:count_points_via_function}.

\section*{Acknowledgements.}

This article is based on the doctoral thesis of the author.
The author would like to express his sincere gratitude to his advisor Atsushi Shiho for
having meaningful discussions with him, reading carefully the draft of this paper,
pointing out a lot of mistakes on it, giving him valuable suggestions and
encouraging him patiently and warmly.

This work was supported by Grant-in-Aid for JSPS Fellows (24-9501).

\bibliographystyle{dagaz}
\bibliography{math}

\end{document}